\numberwithin{equation}{section} 
\titleformat{\section}[block]{\large\bfseries\filcenter}{\thesection.}{1em}{} 
\titleformat{\subsection}[block]{\bfseries}{\thesubsection.}{1em}{} 
\def\l@section{\@tocline{1}{0pt}{0pc}{1pc}{}}
\def\l@subsection{\@tocline{2}{0pt}{1pc}{2pc}{}}
\providecommand{\R}{}
\providecommand{\N}{}
\renewcommand{\R}{\mathbb{R}}
\renewcommand{\N}{{\mathbb N}}
\newcommand{\E}[1]{{\mathbf E}\left[#1\right]}
\newcommand\cD{\mathcal D}
\newcommand\cF{\mathcal F}
\newcommand\cG{\mathcal G}
\newcommand\cI{\mathcal I}
\newcommand\cJ{\mathcal J}
\newcommand\cN{\mathcal N}
\newcommand\cP{\mathcal P}
\newcommand\cS{{\mathcal S}}
\newcommand\cU{{\mathcal U}}
\newcommand{\bE}{\mathbf{E}}
\newcommand{\bP}{\mathbf{P}}
\newcommand{\convdist}{\ensuremath{\stackrel{\mathrm{d}}{\rightarrow}}}
\newcommand{\convas}{\ensuremath{\stackrel{\mathrm{a.s.}}{\rightarrow}}}
\newcommand{\aseq}{\ensuremath{\stackrel{\mathrm{a.s.}}{=}}}
\newcommand{\pran}[1]{\left(#1\right)}
\providecommand{\eps}{}
\renewcommand{\eps}{\epsilon}
\providecommand{\ora}[1]{}
\renewcommand{\ora}[1]{\overrightarrow{#1}}
\newcommand\urladdrx[1]{{\urladdr{\def~{{\tiny$\sim$}}#1}}} 
\DeclareRobustCommand{\SkipTocEntry}[5]{} 
\newtheorem{thm}{Theorem}[section]
\newtheorem{lem}[thm]{Lemma}
\newtheorem{prop}[thm]{Proposition}
\newtheorem{cor}[thm]{Corollary}
\newtheorem{dfn}[thm]{Definition}
\newtheorem{fact}[thm]{Fact}
\newtheorem{claim}[thm]{Claim}
\newtheorem{remark}[thm]{Remark}
\numberwithin{thm}{section}
\newcounter{atscan}
\renewcommand{\theatscan}{$A_{t,\arabic{atscan}}$}
\newcounter{btscan}
\renewcommand{\thebtscan}{$B_{t,\arabic{btscan}}$}
\definecolor{clou}{rgb}{0.8,0.25,0.5125}
\newcommand{\lou}[1]{\textcolor{cjl}{#1}}
\definecolor{cjl}{rgb}{0,0,0}
\newcommand{\jl}[1]{\textcolor{cjl}{#1}}
\newcommand{\rX}{\ensuremath{\mathrm{X}}}
\newcommand{\rx}{\ensuremath{\mathrm{x}}}
\newcommand{\rW}{\ensuremath{\mathrm{W}}}
\newcommand{\rB}{\ensuremath{\mathrm{B}}}
\newcommand{\argmax}{\ensuremath{\mathop{\mathrm{argmax}}}}
\newcommand{\argmin}{\ensuremath{\mathop{\mathrm{argmin}}}}
\DeclareRobustCommand{\stirling}{\genfrac\{\}{0pt}{}}
\newcommand{\rad}{\ensuremath{r_{N}}}
\begin{document}

\title{Barycentric Brownian Bees} 
\author{Louigi Addario-Berry}
\author{Jessica Lin}
\author{Thomas Tendron}

\date{June 5, 2020; revised August 16, 2021} 

\address[LAB]{Department of Mathematics and Statistics, McGill University, Burnside Hall, 805 Sherbrooke Street West, Montreal, QC H3A 0B9}
\email{louigi.addario@mcgill.ca}
\urladdrx{http://problab.ca/louigi/}

\address[JL]{Department of Mathematics and Statistics, McGill University, Burnside Hall, 805 Sherbrooke Street West, Montreal, QC H3A 0B9}
\email{jessica.lin@mcgill.ca}
\urladdrx{https://sites.google.com/view/jessicalin-math/home}

\address[TT]{Department of Statistics, University of Oxford, 24 St Giles’, Oxford, United Kingdom OX1 3LB}
\email{thomas.tendron@mail.mcgill.ca}
\urladdrx{https://thomastend.github.io/ttend/}

\subjclass[2010]{60K35,60J70,60J65,82C22}

\begin{abstract} 
We establish an invariance principle for the barycenter of a Brunet-Derrida particle system in $d$ dimensions. The model consists of $N$ particles undergoing dyadic branching Brownian motion with rate 1. At a branching event, the number of particles is kept equal to $N$ by removing the particle located furthest away from the barycenter. To prove the invariance principle, a key step is to establish Harris recurrence for the process viewed from its barycenter.
\end{abstract}

\maketitle
\tableofcontents



%



%

\section{Introduction} \label{sec:intro}
The {\em barycentric Brownian bees} processes are Brunet--Derrida particle systems defined as follows. The population consists of $N$ individual particles, whose positions are points in $\R^d$. Independently, each particle moves according to a standard Brownian motion, and undergoes binary branching at rate one. The instant when a branching event occurs, the particle furthest from the current barycenter of the particles is removed. Our notation for such a process is 
\[
\rX = (X(t),t \ge 0) = ((X_i(t))_{i\in [N]},t \ge 0),  
\]
\jl{where $[N]:=\left\{1, 2, \ldots, N\right\}$ denotes the index set of natural numbers up to $N$.} We write $\overline{X}(t) = N^{-1}\sum_{1 \le i \le N} X_i(t)$ and $\overline{\rX}=(\overline{X}(t),t \ge 0)$, and call $\overline{X}(t)$ the {\em barycenter} of $X(t)$. 

The result of the current work is an invariance principle for the barycenter process $\overline{\rX}$. 
\begin{thm}\label{thm:main}
For all $d \ge 1$ and $N \ge 1$, there exists $\sigma=\sigma(d,N)\in (0, \infty)$ such that, as $m \to \infty$,
\[
\pran{m^{-1/2}\overline{X}(tm),0 \le t \le 1} \convdist  (\sigma B(t),0 \le t \le 1),
\]
with respect to the Skorohod topology on $\cD([0,1], \R^d)$, where $(B(t),0 \le t \le 1)$ is a standard Brownian motion in $\R^d$ starting at the origin. 
\end{thm}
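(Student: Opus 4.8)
The plan is to reduce the invariance principle to a classical Donsker theorem for i.i.d.\ sums by exploiting the regeneration structure furnished by the Harris recurrence of the process viewed from its barycenter. Write $Z(t) = (X_i(t) - \overline{X}(t))_{i \in [N]}$ for the centered configuration. The first observation is that $Z$ is an \emph{autonomous} Markov process: the Brownian increments in centered coordinates and the branching/removal rule (remove the particle maximizing $|Z_i|$) depend only on $Z$, and $\overline{X}$ never feeds back into $Z$. Hence the Harris recurrence established earlier applies to $Z$, and via Nummelin splitting (Athreya--Ney / Asmussen) it yields regeneration times $0 \le T_0 < T_1 < T_2 < \cdots$ at which the law of the $Z$-process restarts, with i.i.d.\ cycle lengths $\tau_k = T_{k+1} - T_k$ and i.i.d.\ excursions $(Z(T_k + s), 0 \le s < \tau_k)$.

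Second, I would record how $\overline{X}$ is built from $Z$. Between branch events $d\overline{X} = N^{-1}\sum_i dB_i$, a continuous martingale with deterministic bracket $(t/N)I_d$; at a branch event where particle $j$ splits and the particle at location $Y$ is deleted, $\overline{X}$ jumps by $N^{-1}(Z_j - Z_Y)$, a functional of $Z$ alone. Thus the increment $D_k := \overline{X}(T_{k+1}) - \overline{X}(T_k)$ is a measurable functional of the $k$-th excursion together with fresh driving noise, so the $D_k$ (for $k \ge 1$) are i.i.d. Because the dynamics are invariant under the orthogonal group $O(d)$, and the regeneration set may be chosen $O(d)$-invariant, each $D_k$ is equal in law to $O D_k$ for every $O \in O(d)$; in particular $\e[D_k] = 0$ and $\va(D_k) = c\, I_d$ for some constant $c$.

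Given this, the argument concludes as follows. Donsker's theorem gives $n^{-1/2}\sum_{k \le \lfloor n t\rfloor} D_k \convdist \sqrt{c}\, B(t)$ in $\cD([0,1],\R^d)$. The renewal law of large numbers gives $R(s)/s \to 1/\e[\tau]$ almost surely, where $R(s) = \max\{k : T_k \le s\}$ counts regenerations. A random time change (Anscombe/Billingsley) then transfers the partial-sum FCLT to the continuous clock, yielding $m^{-1/2}\overline{X}(tm) \convdist \sigma B(t)$ with $\sigma^2 = c/\e[\tau]$. It remains to: (i) show the boundary block $\overline{X}(tm) - \overline{X}(T_{R(tm)})$ and the maximal within-block oscillation of $\overline{X}$ are $o_{\mathbf{P}}(m^{1/2})$ uniformly in $t \in [0,1]$, which gives Skorohod tightness and identifies the limit; and (ii) verify $\sigma \in (0,\infty)$. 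Finiteness will come from the moment bounds below, and strict positivity is automatic because the continuous martingale part contributes bracket $(t/N)I_d$ and is independent of the jump part (the common mode of the $B_i$ is orthogonal to, hence independent of, $Z$), so that $\va(D_k) \succeq (\e[\tau]/N)\,I_d$ and thus $\sigma^2 \ge 1/N > 0$.

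The hard part will be the moment estimates that make all of this run: that the regeneration is positive recurrent with $\e[\tau] < \infty$, and that $\e[|D_k|^2] < \infty$. The second is the crux, since $|D_k|$ is controlled by the number of branch events in a cycle (Poisson-like with intensity $N\tau_k$) times the maximal jump size $N^{-1}\max_i |Z_i|$, so I would need tail bounds on the swarm diameter $\max_i |Z_i|$ along an excursion, integrated against the law of the cycle length. I expect these to follow from the Lyapunov/drift estimates underlying the Harris recurrence proof established earlier, but assembling them into a uniform second-moment bound on $D_k$ --- and checking that the boundary terms in step (i) are genuinely negligible under diffusive scaling --- is where the real work lies.
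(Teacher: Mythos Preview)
Your approach is correct in outline and takes a genuinely different route from the paper. The paper never invokes abstract Nummelin splitting; instead it constructs regeneration times explicitly, by defining events $A_t,B_t$ (a specific spatial configuration is reached, then a specific branching pattern occurs) which force all particles at time $t+2$ to descend from the single ``queen'' $X_1(t+1)$. It then tracks $X_1$ rather than $\overline{X}$: Donsker is applied to the i.i.d.\ increments $X_1(\tau_{i+1})-X_1(\tau_i)$, and a separate estimate shows $m^{-1/2}\sup_{t\le m}|\overline{X}(t)-X_1(t)|\to 0$ in probability. The explicit construction yields exponential tails for the cycle lengths directly. Your decomposition $\overline{X}=\overline{B}+J$, with the common mode $\overline{B}$ independent of $Z$ and $J$ a functional of $Z$, is cleaner and yields a bonus the paper does not have: the lower bound $\sigma^2\ge 1/N$ (the paper says nothing quantitative about $\sigma$ for $N\ge 3$). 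The price is that Harris recurrence alone gives neither $\e[\tau]<\infty$ nor second moments for $D_k$; you correctly identify this as the hard part. The paper's Lemma~\ref{lem:lem10} (uniform exponential tails for the hitting time of bounded-extent configurations) is exactly the drift estimate needed to upgrade to geometric ergodicity and thence to exponential moments for the Nummelin cycles, but continuous-time Nummelin splitting carries its own overhead (one typically passes through a resolvent or skeleton chain, and must check the regeneration set can be taken $O(d)$-invariant), so this step, while standard, is not free.
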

It turns out that $\sigma(d, 1)=1=\sigma(d,2)$ for all $d\geq 1$. For $N\geq 3$, our proof does not yield insight into the value of $\sigma(d,N)$. We point out that whatever the initial configuration $X(0),$ it is sent to the origin by this scaling. Since the Skorohod topology relativized to $C([0,1], \R^d)$ coincides with the uniform topology, it is not hard to see that any reasonable smoothing of $\left(m^{-1/2}\overline{X}(tm), 0\leq t\leq 1\right)$ will converge in distribution to $(\sigma B(t),0 \le t \le 1)$ in the uniform topology on $C([0,1], \R^d)$. For example, one may linearly interpolate $\left(m^{-1/2}\overline{X}(tm), 0\leq t\leq 1\right)$ at integer times. 

A key step in proving Theorem~\ref{thm:main} is to show that, when viewed from its barycenter, the process is Harris recurrent. 
\begin{dfn}\emph{(Harris recurrence, \cite{MT})}\label{dfn:HarrisRec}
We say that a time-homogeneous c\`adl\`ag Markov process $\Phi=(\Phi_t,t\geq 0)$ on the state space $\R^{d\times N}$ is Harris recurrent if there exists a $\sigma$-finite \jl{ and nonzero} Borel measure $\varphi$ on $\R^{d\times N}$ such that for any Borel set $A\subset \R^{d\times N}$ with $\varphi(A)>0$, for all $x\in \R^{d\times N}$,
\[
\bP_x(\eta_A=\infty)=1
\] where $\eta_A\coloneqq \int_0^\infty \mathbbm{1}_{\{\Phi_t\in A\}}dt$ is the total time spent in $A$ by $\Phi$.
\end{dfn}
Throughout the paper, we will denote particle configurations by $x=(x_i)_{i\in [N]}\in \R^{d\times N}$ where each $x_i\in \R^d$. Given a configuration $x\in \R^{d\times N}$, and another point $y \in \R^d$, we write $x-y := (x_i-y)_{i\in [N]}$. Equipped with this definition and notation, we state the second main result of the present work. 
\begin{thm}\label{thm:HarrisRec}
The process $\rX-\overline{\rX} := (X(t)-\overline{X}(t),t \ge 0)$ is Harris recurrent. 
\end{thm}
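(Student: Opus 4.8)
The plan is to exploit translation invariance: since the dynamics of $\rX$ commute with translations of $\R^d$, the centered process $Y := \rX - \overline{\rX}$ is a time-homogeneous strong Markov process living on the subspace $\Pi := \set{x \in \R^{d\times N} : \sum_{i\in[N]} x_i = 0}$. On $\Pi$ I would verify the two ingredients that the Meyn--Tveedie theory \cite{MT} asks for in order to conclude Harris recurrence, namely a minorization on a suitable compact set together with recurrence of that set. The reference measure will be $\varphi := $ Lebesgue measure on $\Pi$, which is a $\sigma$-finite nonzero Borel measure on $\R^{d\times N}$, and the candidate Lyapunov function will be the squared dispersion $S(x) := \sum_{i\in[N]} |x_i|^2$, with associated sublevel sets $K_\rho := \set{x \in \Pi : S(x) \le \rho}$.

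First I would establish the minorization and the accompanying irreducibility from a single estimate. Fix $h>0$; starting from any $x$, with probability at least $e^{-Nh}>0$ no branching event occurs on $[0,h]$, and on that event the particles simply perform $N$ independent Brownian motions, so $Y(h)$ is the orthogonal projection onto $\Pi$ of a Gaussian vector. This projection has a transition density with respect to Lebesgue measure on $\Pi$ which is continuous, strictly positive, and bounded below uniformly as $x$ ranges over any compact set. That one fact yields both $\varphi$-irreducibility (from every $x$ the process reaches any set of positive $\varphi$-measure with positive probability) and a minorization $\bP_x(Y(h) \in \cdot\,) \ge \delta\,\nu(\cdot)$ valid for all $x \in K_\rho$, where $\delta>0$ and $\nu$ is normalized Lebesgue measure on a fixed ball; hence $K_\rho$ is a small set.

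Next I would prove recurrence through a Foster--Lyapunov drift condition for $S$. The diffusive part of the extended generator $\cL$ contributes $+d(N-1)$, since the sample dispersion of $N$ independent Brownian motions grows linearly at that rate. The jump part acts at total rate $N$: a uniformly chosen particle is duplicated and the particle furthest from the (slightly displaced) barycenter is deleted. With probability $(N-1)/N$ the duplicated particle is not the extremal one, and then a particle lying at squared distance at least $S/N$ from the barycenter is removed; computing the resulting decrement of $S$ — including the re-centering of the barycenter after deletion — shows the expected change is at most $-cS$ for some $c=c(N)>0$ once $S$ is large. This gives $\cL S(x) \le b - c\,S(x)$ for a finite constant $b$, so by the continuous-time drift criterion $Y$ enters $K_\rho$ in almost surely finite time from every configuration and returns to it infinitely often.

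Finally, combining a recurrent small compact set with $\varphi$-irreducibility, the regeneration structure attached to the minorization shows that any $A$ with $\varphi(A)>0$ is entered during infinitely many excursions away from $K_\rho$, each contributing a positive amount of occupation time, whence $\bP_x(\eta_A = \infty) = 1$ for every $x$, which is exactly Harris recurrence. The hard part will be making the jump term of the drift computation rigorous: the selection rule ``remove the particle furthest from the barycenter'' is non-smooth, involving an argmax (with possible ties, generically of measure zero) and the shift of the barycenter after deletion, so I must check that deleting the extremal particle truly dominates the diffusive growth uniformly for large $S$, and justify Dynkin's formula for the unbounded test function $S$ by a localization/stopping argument; some additional care will be needed to confirm the regularity (e.g.\ that compact sets are petite) required to invoke the continuous-time Harris theorem.
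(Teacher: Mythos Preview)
Your approach is correct and genuinely different from the paper's. Both arguments share the minorization step (conditioning on ``no branching on $[0,h]$'' yields a Gaussian lower bound on the transition kernel restricted to the hyperplane $\Pi$), but they diverge on how recurrence of a compact set is obtained. The paper does \emph{not} use a Lyapunov function: it instead proves a combinatorial lemma (Lemma~\ref{lem:regen_unambiguous}) asserting that from any unambiguous configuration there is a sequence of at most $(N-1)^2$ branching events collapsing all mass to one site, and combines this with Lemma~\ref{lem:unambiguous_fast} to show (Lemma~\ref{lem:lem10}) that the hitting time of $\{E(X(t))\le L\}$ has exponential tails uniformly in the starting point. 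Harris recurrence is then verified by hand, applying the minorization on each interval $[T_i+1,T_i+2]$.

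Your Foster--Lyapunov route via $S(x)=\sum_i|x_i|^2$ is cleaner for Harris recurrence alone, and the jump drift you flag as the ``hard part'' can in fact be computed exactly. Writing $u_\ell=x_\ell/(N+1)$ for the post-branching barycenter, one has the identity $S'-S=\tfrac{N}{N+1}|x_\ell|^2-\tfrac{N+1}{N}|x_{k(\ell)}-u_\ell|^2$; since $k(\ell)$ maximizes $|x_j-u_\ell|$ over $j\in[N]$, it satisfies $|x_{k(\ell)}-u_\ell|^2\ge N^{-1}\sum_j|x_j-u_\ell|^2=S/N+|u_\ell|^2$, and averaging over $\ell$ gives $\E_\ell[S'-S]\le -2S/N^2$, hence $\cL S\le d(N-1)-\tfrac{2}{N}S$ globally on $\Pi$. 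Your heuristic (``with probability $(N-1)/N$ the duplicated particle is not the extremal one'') is a bit misleading --- the killed particle is determined by the \emph{shifted} barycenter $u_\ell$ and need not coincide with the pre-branching extremal particle --- but the direct computation bypasses this. What each route buys: the Lyapunov argument is shorter and immediately yields exponential return-time moments; the paper's combinatorial lemmas are less economical for Harris recurrence alone but are reused later to build the regeneration times $(\tau_i)$ underlying the invariance principle, so they are not wasted effort.
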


\jl{We prove Theorem~\ref{thm:HarrisRec} in Section \ref{subsec:HarrisRec}; for the proof we choose the measure $\varphi$ in the definition of Harris recurrence to be the $N$-fold product of the $d$-dimensional standard Gaussian measure.}

\subsection{Two constructions of barycentric Brownian bees, and a little additional notation}\label{sub:formalism}
We work on an abstract probability space $(\Omega, \cF, \bP)$, rich enough to support all of the random variables encountered. 

There are two distinct constructions of the barycentric Brownian bees (hereafter BBB) process that will be useful at different points in our analysis. 
The first construction closely parallels that given in Section 2 of \cite{ MR3784486}. 
\begin{itemize}
\item Let $\cJ=(\cJ_t,t\geq 0)$ be the counting process for a Poisson point process on $[0,\infty)$ with rate $N$, and for $i \ge 0$ let $J_i = \inf\{t\ge 0: \cJ_t=i\}$. The process $\cJ$ will be the number of branching events of $\rX$ up to time $t$, and $(J_i,i \ge 1)$ will be its branching times (and $J_0=0$). 
\item Let $(\cU_i;i\geq 1)$ be independent random variables, uniformly distributed on $\{1, \cdots, N\}$ and independent of $\cJ$. 
\item Let $(\rB_i)_{i\in [N]}$ be independent $d$-dimensional Brownian motions starting from the origin, which are independent of $\cJ$ and of $(\cU_i;i\geq 1)$. 
\end{itemize}
With these definitions at hand, the picture to have in mind when reading the formal construction is this: at time $J_i$, particle $\cU_i$ branches, and some particle is killed. If the killed particle has index $k$, then we may equivalently view this as the particle with index $k$ jumping to the location of the branching event, which is $X_{\cU_i}(J_i-)$. 

Formally, fix an initial configuration $x=(x_j)_{j \in [N]} \in \R^{d\times N}$. We may define the BBB inductively, as follows. Set $X_j(0)=x_j$ for each $j \in [N]$. Then, for each $i\geq 1$ and $j\in [N]$, let 
\[
X_j(t)\coloneqq X_j(J_{i-1})+B_j(t)-B_j(J_{i-1}), \;\;\; \mbox{ for }t\in [J_{i-1}, J_i).
\] This essentially states that in between jump times $J_{i-1}$ and $J_{i}$, the BBB process is nothing more than $N$ independent Brownian motions, appropriately shifted. Finally, for each $i \ge 1$, at time $J_i-$, we let particle $\cU_i$ branch, and kill (remove) the particle with index
\[
k\coloneqq \argmax_{j\in [N]} \left|X_j(J_i -)-\frac{1}{N+1}\left(X_{\cU_i}(J_i-)+\sum_{l\in [N]}X_l(J_i-)\right)\right|\, .
\] 
In words, we kill the particle which is furthest from the barycenter; the barycenter is calculated taking the newly born particle into account. We do not need to worry about ties since they occur with probability $0$.
This means that at time $J_i$, we set 
\begin{equation}\label{eq:firstconstruction}
X_j(J_i)=\begin{cases}X_{\cU_i}(J_i-)&j=k,\\X_j(J_i-)&j\in [N]\backslash\{k\}.\end{cases}
\end{equation}
This completes the first construction.

The second construction realizes the BBB process as embedded within a standard $d$-dimensional dyadic branching Brownian motion $W(t)=(W_i(t))_{i \in [\cN(t)]}$. The function $\cN:[0,\infty)\to\N$ counts the number of particles in the branching Brownian motion $\rW$ at any given time. Our convention is that when a branching event occurs in $\rW$, the new particle is appended to the end of the list of existing particles; if the particle with index $i$ branches at time $\tau$ then $\cN(\tau)=\cN(\tau-)+1$ and 
\[
W_j(\tau) = \begin{cases}
				W_j(\tau-)	&\mbox{ if }j < \cN(\tau),\\
				W_i(\tau-)	& \mbox{ if }j=\cN(\tau).
				\end{cases}
\]
The branching property means that after time $\tau$, the particles with indices $i$ and $\cN(\tau)$ evolve independently.

Given $x\in \R^{d\times N}$, we initialize $\rW$ with $N$ particles in $\R^d$ at positions $(x_i)_{i\in [N]}$, so that $\cN(0)=N$ and $W(0)=x$. To describe the embedding of $\rX$ within $\rW$, it suffices to explain the evolution of the vector $\cI(t)=(\cI_1(t),\ldots,\cI_N(t)) \in \cN(t)^{[N]}$ of indices of particles belonging to the BBB process at all times $t\geq 0$.

First, $\cI(0)=(1,2,\ldots,N)$, and $\cI$ is constant between branching events of $\rW$. If a particle with index $i \not\in \{\cI_1(\tau-),\ldots,\cI_N(\tau-)\}$ branches at time $\tau>0$ then $\cI(\tau)=\cI(\tau-)$. 
Finally, if for some $i \in [N]$ the particle with index $\cI_i(\tau-)$ branches at time $\tau$, then let 
\[
k = \argmax_{j \in [N]} \left|W_{\cI_j(\tau-)}(\tau)-\frac{1}{N+1}\left( W_{\cI_i(\tau-)}(\tau)+  \sum_{l \in [N]} W_{\cI_l(\tau-)}(\tau)\right) \right|\, .
\]
In other words, $k$ is the index of the particle furthest from the barycenter, when the barycenter is calculated taking the newly born particle (which has index $\cN(\tau)$ and position $W_{\cI_i(\tau-)}(\tau)=W_{\cN(\tau)}(\tau)$) into account. 

Now set 
\[
\cI_j(\tau)=
\begin{cases}
\cN(\tau)	& \mbox{ if } j = k\\
\cI_j(\tau-)	& \mbox{ if } j \ne k\, .
\end{cases}
\]

%

We can then realize the process $\rX$ as 
\begin{equation}\label{eq:secondconstruction}
\rX=(X_j(t))_{j\in [N]}\coloneqq (W_{\cI_j(t)}(t))_{j\in [N]}. 
\end{equation}
The realization of the BBB process within a branching Brownian motion will be useful in particular in Section \ref{sec:Constr}. 

\lou{
The second construction yields an easy description of the ancestral trajectories of BBB particles; if $\cI_j(t)=i$, so that $X_j(t)=W_i(t)$, then the 
ancestral trajectory of $X_j(t)$ is $(W_{i(s,t)}(s),0 \le s \le t)$, where $i(s,t)$ denotes the index of the time-$s$ ancestor of $W_i(t)$. Note that the ancestral trajectories are a.s.\ continuous. We use the notion of ancestral trajectories (and their continuity) with both BBB constructions at various points in the paper. }

We note that the labelling conventions of particles are consistent between the two constructions, in the sense that from a given configuration $(X_j(t-))_{j \in [N]}$, if there is a branching event at time $t$, then the definitions of the vectors $(X_j(t))_{t \in [N]}$ given by (\ref{eq:firstconstruction}) and by (\ref{eq:secondconstruction}) agree with one other.

At one point in the paper it will be useful to allow the BBB process to start from a configuration with fewer than $N$ particles. In this case no killing occurs until the population size reaches $N$; a branching event which occurs before this time simply increases the population size by one (with the convention that the newly-born particle is appended to the end of the list of existing particles, like in a BBM). For a BBB started from fewer than $N$ particles, we write $N(t)$ for the number of particles at time $t$, so $X(t)=(X_i(t))_{i \in [N(t)]}$ and a.s.\ $N(t)=N$ for all $t$ sufficiently large.

We write $(\cF_t,t \ge 0)$ for the natural filtration of $\rX$,
\[
\cF_t=\cF_t(\rX) \coloneqq \sigma((X_j(s))_{j\in [N]}, 0\leq s\leq t)\, .
\] 
\lou{When viewing $\rX$ as embedded within a BBM $\rW=(W(t),t \ge 0)$ via (\ref{eq:secondconstruction}), we will also need to work with the natural filtration of $\rW$, which we denote by $(\cG_t,t \ge 0)$. Note that in this case $\cF_t \subset \cG_t$, and $\rX$ is still a strong Markov process relative to the enlarged filtration.}

We write $\bP_x(\cdot)\coloneqq \bP(\cdot |X(0)=x)$ for $x\in \R^{d\times N}$. Likewise, for a Borel probability measure $\mu$ on $\R^{d\times N}$, we write $\bP_\mu$ for the probability measure under which the process starts from a $\mu$-distributed initial configuration, namely $\bP_\mu(X(0)\in A)=\mu(A)$ for Borel $A\subset \R^{d\times N}$. 

We use the notation $B(y,r)$ to denote the Euclidean ball centered at $y$ of radius $r$ in $\R^d$. Given two functions $f,g:\R\to \R$ with $g\not = 0$, we write $f\overset{m\to\infty}{\sim}g$ if $\frac{f(m)}{g(m)}=1+o(1)$ as $m\to\infty$. Finally, for $\R$-valued random variables $Z_1$ and $Z_2$, we say that $Z_1$ stochastically dominates $Z_2$ (with respect to a given probability measure $\bP$), if 
\[
\bP(Z_1\geq y)\geq \bP(Z_2\geq y), \;\;\; \forall y\in \R\,. 
\] 

\subsection{Overview of the proof}\label{s.pfOverview}
In this section, we first describe a general mechanism for obtaining invariance principles for c\`adl\`ag processes. We then sketch how it is applied in the current setting, and in particular how Harris recurrence comes into play, and conclude with an overview of the proof that, when viewed from its barycenter, the BBB process is Harris recurent. 

Fix $N,d\in \N$. Given an arbitrary $\cD([0,\infty), \R^{d\times N})$-valued stochastic process $ \rX=((X_j(t))_{j\in [N]}, t\geq 0)$, we define the barycenter $\overline{X}(t)\coloneqq N^{-1}\sum_{1\leq i\leq N}X_i(t)$; so $\overline{\rX}=(\overline{X}(t),t\geq 0)$ is an $\R^d$-valued stochastic process. We say that $\overline{\rX}$ satisfies an invariance principle if there exist $\alpha \in \R^d$ and a $d\times d$ matrix $\Sigma=\Sigma(d,N)$ such that 
\begin{equation}\label{e.geninv}
\left(m^{-1/2}\left(\overline{X}(tm)-tm\alpha\right), 0\leq t\leq 1\right)\xrightarrow{d}\left(\Sigma B(t), 0\leq t\leq 1\right) 
\end{equation} with respect to the Skorohod topology on $\cD([0,1], \R^{d\times N})$, where $\rB=(B(t), 0\leq t\leq 1)$ is a standard $d$-dimensional Brownian motion. 
The following general proposition identifies sufficient conditions for $\overline{\rX}$ to satisfy an invariance principle. 

\begin{prop}\label{prop:prop2}
Let $(\Omega,\cF,\bP)$ denote a probability space and $\rX:\Omega \to \cD([0,\infty), \R^{d\times N})$ a c\`adl\`ag stochastic process with $\rX=((X_j(t))_{j\in [N]}, t\geq 0)$ with $X(0)=x$ for some $x\in \R^{d\times N}$.  Suppose that there exists a nondecreasing sequence $(\tau_i; i\geq 1)$ of $\bP_x$-a.s. nonnegative, finite random variables such that the following properties are satisfied.
\begin{enumerate}
 \item (IID time increments) The time increments
    \[
    (\tau_{i+1}-\tau_i; i\geq 1)
     \] are independent and identically distributed with finite and \jl{nonzero} mean.
    \item (IID increments) The $\R^{d}$-valued random variables 
    \[
    \left( (X_1(\tau_{i+1})-X_1(\tau_i));i\geq 1\right)
    \] are independent and identically distributed.
    \item (Identical distribution between times) The $\R$-valued random variables 
    \[
    \left(\sup_{\tau_i\leq t\leq \tau_{i+1}}|X_1(t)-X_1(\tau_i)|; i\geq 1\right)
    \]
    are identically distributed.
    \item (Finite variance increments) We have 
    \[
    \bE_x\left[\sup_{\tau_1\leq t\leq \tau_2}|X_1(t)-X_1(\tau_1)|^2\right]<\infty.
    \]
    \item (Barycenter approximation in probability) We have
    \[
    m^{-1/2}\sup_{0\leq t\leq 1}|\overline{X}(tm)-X_1(tm)|\to 0
    \] in probability as $m\to\infty$.
\end{enumerate} 
    Then the barycenter $\overline{\rX}$ of $\rX$ satisfies an invariance principle \eqref{e.geninv}. Moreover, the rescaling $\Sigma$ and the drift $\alpha$ are given by 
    \[
    \Sigma= \bE_x[\tau_2-\tau_1]^{-1/2}Q
    \]
     where $Q$ is a $d$-by-$d$ matrix such that $C\coloneqq QQ^T$ is the covariance matrix of $X_1(\tau_2)-X_1(\tau_1)$, and 
     \[
    \alpha= \bE_x[X_1(\tau_2)-X_1(\tau_1)]\bE_x[\tau_2-\tau_1]^{-1}.
    \]
\end{prop}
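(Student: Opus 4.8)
The plan is to reduce the invariance principle for $\overline{\rX}$ to a Donsker-type functional CLT for the single-coordinate process $X_1$, using the regeneration-like structure provided by the times $(\tau_i)$, and then to transfer the conclusion from $X_1$ to $\overline{X}$ via condition (5). The key observation is that hypotheses (1)--(2) say that $X_1$, \emph{sampled at the random times $\tau_i$}, is a random walk: writing $D_i := X_1(\tau_{i+1})-X_1(\tau_i)$ and $T_i := \tau_{i+1}-\tau_i$, the vectors $(D_i)_{i \ge 1}$ are i.i.d.\ in $\R^d$ and the scalars $(T_i)_{i \ge 1}$ are i.i.d.\ with finite nonzero mean. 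Condition (4) will be what upgrades this to finite second moments for the increments $D_i$.

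First I would establish finiteness of the relevant moments and identify the limiting parameters. Since $|D_1| = |X_1(\tau_2)-X_1(\tau_1)| \le \sup_{\tau_1 \le t \le \tau_2}|X_1(t)-X_1(\tau_1)|$, condition (4) gives $\bE_x[|D_1|^2] < \infty$, so the covariance matrix $C = QQ^T$ of $D_1$ and the mean $\mu_D := \bE_x[D_1]$ are well defined, as is $\bE_x[T_1] \in (0,\infty)$ by (1). Next I would prove a functional CLT for the partial-sum process of the $D_i$ \emph{run on the clock $\tau$}. Concretely, for $s \in [0,1]$ let $n(sm)$ be the number of regeneration times $\tau_i$ before time $sm$; by the strong law of large numbers applied to the i.i.d.\ time increments, $\tau_{\lfloor km\rfloor}/m \to k\,\bE_x[T_1]$ uniformly on compacts, so that $m^{-1} n(sm) \to s/\bE_x[T_1]$ uniformly in $s \in [0,1]$, $\bP_x$-a.s. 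Combining the multivariate Donsker invariance principle for the random walk $S_n := \sum_{i=1}^{n} (D_i - \mu_D)$ with this a.s.\ time-change (via the standard composition/continuous-mapping argument for weakly convergent processes, e.g.\ Billingsley's treatment of random time changes), I obtain
\[
\left( m^{-1/2}\Bigl( \sum_{i=1}^{n(tm)} D_i - tm\,\frac{\mu_D}{\bE_x[T_1]} \Bigr),\ 0 \le t \le 1 \right) \xrightarrow{d} \left( \Sigma B(t),\ 0 \le t \le 1\right),
\]
with $\Sigma = \bE_x[T_1]^{-1/2} Q$ and $\alpha = \mu_D\,\bE_x[T_1]^{-1}$, matching the claimed formulas.

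It then remains to pass from the sum $\sum_{i \le n(tm)} D_i$, which equals $X_1(\tau_{n(tm)+1}) - X_1(\tau_1)$ (a telescoping), to the actual value $X_1(tm)$, and finally from $X_1$ to $\overline{X}$. The discrepancy between $X_1(tm)$ and the nearest regeneration value is controlled by the within-block oscillation $O_i := \sup_{\tau_i \le t \le \tau_{i+1}} |X_1(t) - X_1(\tau_i)|$; conditions (3) and (4) give that these are identically distributed with finite second moment, and a maximal-inequality / Markov-bound argument (summing $\bP_x(O_i > \eps\sqrt{m})$ over the $O(m)$ relevant blocks) shows $m^{-1/2}\max_{i \le n(m)+1} O_i \to 0$ in probability, so the regeneration-sampled path and the continuous-time path $X_1$ are uniformly $o(\sqrt m)$ apart on $[0,m]$. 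Hence $(m^{-1/2}(X_1(tm)-tm\alpha),\,0\le t\le 1)$ has the same weak limit. The final step applies condition (5) together with Slutsky's theorem (in the Skorohod space, using that the limit is continuous so uniform and Skorohod control coincide) to replace $X_1(tm)$ by $\overline{X}(tm)$, yielding \eqref{e.geninv}.

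I expect the main obstacle to be the oscillation-control step that bridges discrete regeneration times and continuous time: one must argue carefully that the identically-distributed (but \emph{not} assumed independent) oscillations $O_i$ from (3), with only the single second-moment bound from (4) on $O_1$, still satisfy $m^{-1/2}\max_{i\le n(m)+1} O_i \to 0$ in probability. A union bound over the random number $n(m)+1 = O(m)$ of blocks, combined with $\bP_x(O_1 > \eps\sqrt m) = o(1/m)$ (which follows from $\bE_x[O_1^2]<\infty$ via $\bP_x(O_1>\eps\sqrt m)\le \bE_x[O_1^2]/(\eps^2 m)$ — note this gives $O(1/m)$, so one needs a slightly sharper truncation or a uniform-integrability refinement rather than a crude union bound), delivers the claim; getting this tightness argument right, rather than the CLT itself, is the delicate point. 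The other steps — the random time change and the Slutsky replacement — are standard once the moment bounds and the a.s.\ ergodic control of $n(\cdot)$ are in place.
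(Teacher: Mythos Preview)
Your proposal is correct and follows essentially the same route as the paper: Donsker for the i.i.d.\ increments $D_i$, a random time change via the strong law for $k[tm]\sim tm/\bE_x[\tau_2-\tau_1]$, oscillation control to pass from the regeneration-sampled sums to $X_1(\cdot m)$, and finally Slutsky together with condition (5) to replace $X_1$ by $\overline X$. You also correctly flag the only delicate point---that a crude union bound gives $O(1/m)$ rather than $o(1/m)$---and anticipate its resolution: the paper handles this exactly as you suggest, upgrading Chebyshev to $m\,\bP_x(O_1>\eps\sqrt m)\le \eps^{-2}\bE_x[O_1^2\mathbbm{1}_{\{O_1>\eps\sqrt m\}}]\to 0$ by dominated convergence (this is the content of Fact~\ref{l.convProb}).
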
  

The proof of Proposition~\ref{prop:prop2} appears in the \hyperref[sec:append]{Appendix}. To apply the proposition in our setting, we begin by constructing a sequence of ``regeneration'' times for the BBB process~$\rX$ with properties $(1),(2)$ and $(3)$ of Proposition \ref{prop:prop2} in mind. The prototypical regeneration time will be defined as the first time $\rho>0$ when two specific events have occurred. The events are constructed in such a way that when they both occur, the process starts over (regenerates) from a single \textit{queen particle}, by which we mean that all particles in the BBB at time $\rho$ are descendants of $X_{1}(\rho-1)$. At a time $\rho$ when these two events have just taken place, the law of $(X_j(\rho)-X_{1}(\rho-1))_{j\in [N]}$ is given by an explicit measure and is independent of the state of the process at any previous regeneration times $\rho'\leq \rho$. 

In order to verify the conditions of Proposition~\ref{prop:prop2}, the key will be to control the amount of time required for regeneration from a single queen particle to occur. 
The events required for regeneration are defined in Section~\ref{subsec:ConstrOutline}; for now we simply note that they shall involve (a) a specific branching pattern occurring over a bounded time interval, and  (b) the particles staying in certain bounded regions over the same time interval. The probability that these events occur will be easy to bound from below, provided that all particles start at a bounded distance from their barycenter.  

The latter requirement is closely connected to the Harris recurrence of $\rX-\overline{\rX}$. Harris recurrence essentially requires that a process eventually spends an infinite amount of time in any sufficiently large set, where ``large''  is measured by a $\sigma$-finite Borel measure which we are free to choose. We prove this for $\rX-\overline{\rX}$ by showing that the time required for $\rX$ to reach a configuration in which all particles are a  bounded distance from the barycenter has exponential tails (see Remark \ref{r.HarrisRec} (ii) below), so in particular has finite moments of all orders.
This will easily yield Harris recurrence. It will also allow us to deduce that our renewal time increments have exponential tails, and thence to 
show that the renewal times satisfy the conditions 
of Proposition \ref{prop:prop2}. Theorem \ref{thm:main} will then straightforwardly follow from Proposition~\ref{prop:prop2}.

The remainder of this section provides an overview of the proof that $\Phi\coloneqq \rX-\overline{\rX}$ is Harris recurrent. In broad strokes, the proof proceeds as follows. Given given points $x_1,\ldots,x_N$ in $\R^d$,  say that $x=(x_1,\ldots,x_N)$ is {\em unambiguous} if for any vector $(f_1,\ldots,f_N)$ of non-negative integers with $\sum_{i=1}^N f_i=N+1$, 
\begin{equation}\label{e.defunam}
\Big|x_j-\frac{1}{N+1}\sum_{i=1}^N f_ix_i\Big| \ne \Big|x_k-\frac{1}{N+1}\sum_{i=1}^N f_ix_i\Big|
\end{equation}
for all $1 \le j < k \le N$. Here is the meaning of this condition. 
Suppose that at some point in the BBB process, the particles have  positions $x_1,\ldots,x_N$. Now imagine that a sequence of branching events occurs in rapid succession, rapidly enough that the particles essentially do not move during the course of the branching. If at some point during such a sequence, the moment a branching event occurs, there are $f_i$ particles at (or extremely near to) position $x_i$ (for each $1 \le i \le N$), then the barycenter of the configuration is at (or extremely close to)
\[
\frac{1}{N+1}\sum_{i=1}^N f_ix_i. 
\]
If $x$ is unambiguous, then at this point there is a unique particle which is furthest from the barycenter, so there is no ambiguity about which particle to kill. 

We actually use a more quantitative version of the above definition. For $\delta >0$, say that $x=(x_1,\ldots,x_N)$ is $\delta$-unambiguous if 
for any vector $(f_1,\ldots,f_N)$ of non-negative integers with $\sum_{i=1}^N f_i=N+1$, for all $1 \le j < k \le n$, we have 
\begin{equation}\label{e.defdelta}
\left|\Big|x_j-\frac{1}{N+1}\sum_{i=1}^N f_ix_i\big| - \big|x_k-\frac{1}{N+1}\sum_{i=1}^N f_ix_i\Big|\right| > \delta. 
\end{equation}
It is not hard to show that if $x=(x_1,\ldots,x_N)$ is $\delta$-unambiguous and $y=(y_1,\ldots,y_N)$ is such that $|x_i-y_i| <\delta/4$ for $1 \le i \le N$, then for any non-negative integers $(f_1,\ldots,f_N)$ with $\sum_{i=1}^N f_i=N+1$, the index $j$ for which 
$\left|x_j-(N+1)^{-1}\sum_{i=1}^N f_ix_i\right|$ is maximized, also maximizes $\left|y_j-(N+1)^{-1}\sum_{i=1}^N f_iy_i\right|$. This means that in the BBB process, if particles stay within distance $\delta/4$ of a fixed $\delta$-unambiguous configuration $\rx$ for a given period of time, then during that time,  for the purpose of identifying which particle is furthest from the barycenter, one may pretend that all particles within distance $\delta/4$ of point $x_i$ are in fact colocated at $x_i$. In other words, during that time the effect of spatial motion can be ignored when calculating which particles are killed after branching events; we may just pretend all particles are located at one of $x_1,\ldots,x_N$. 

The following straightforward lemma implies that from any initial configuration, the BBB process quickly reaches a $\delta$-unambiguous configuration for some $\delta > 0$. Write 
\[
\xi_\delta := \inf\{t \ge 0: X(t)\mbox{ is $\delta$-unambiguous}\}\, .
\]
\begin{lem}\label{lem:unambiguous_fast}
For any $\eps>0$, there exists $\delta=\delta_\eps>0$ such that 
\[
\inf_{x\in \R^{d\times N}}\bP_x(\xi_\delta<1)>1-\epsilon.
\]
\end{lem}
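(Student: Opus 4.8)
The plan is to produce a single deterministic time, well inside $[0,1)$, at which $X$ is already $\delta$-unambiguous with probability at least $1-\epsilon$, uniformly in the starting configuration. Fix $\epsilon>0$ and set $h=\min(1/2,\epsilon/(2N))$. The branching events form a rate-$N$ Poisson process independent of the driving Brownian motions, so with probability $e^{-Nh}\ge 1-\epsilon/2$ there is no branching in $[0,h]$, and on that event $X(h)=(x_i+G_i)_{i\in[N]}$ with $G_1,\dots,G_N$ i.i.d.\ $N(0,hI_d)$. Hence it suffices to show that, for $\delta$ small enough depending on $\epsilon,N,d,h$,
\[
\sup_{x\in\R^{d\times N}}\mathbf{P}\big((x_i+G_i)_{i\in[N]}\text{ is not }\delta\text{-unambiguous}\big)\le \epsilon/2,
\]
since on the intersection of the two events $\xi_\delta\le h<1$. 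Writing $Y_i=x_i+G_i$ and $c_f=(N+1)^{-1}\sum_i f_iY_i$, the complementary event is, by \eqref{e.defdelta}, the union over the finitely many vectors $f$ with $\sum_i f_i=N+1$ and pairs $j<k$ of the events $\{\,\big||Y_j-c_f|-|Y_k-c_f|\big|\le\delta\,\}$, so by a union bound I only need to bound each such probability, uniformly in $x$, by a quantity vanishing as $\delta\to0$.

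Fix such an $f,j,k$ and abbreviate $U=Y_j-c_f$ and $V=Y_k-c_f$, both affine functions of the Gaussian vector $G$; note that $U-V=Y_j-Y_k$ is a nondegenerate Gaussian with covariance $2hI_d$. From the identity $|U|-|V|=(U-V)\cdot(U+V)/(|U|+|V|)$ the bad event is contained in $\{\,|P|\le\delta(|U|+|V|)\,\}$, where $P:=(U-V)\cdot(U+V)=|U|^2-|V|^2$ is a degree-two polynomial in $G$. I would first remove the random normalization: since $|U|+|V|$ is a Lipschitz function of $G$, Gaussian concentration lets me choose a threshold $T$, a bounded multiple (depending on $\epsilon$) of $\sqrt{\mathbf{E}[(|U|+|V|)^2]}+\sqrt h$, so that $\mathbf{P}(|U|+|V|>T)\le\epsilon/4$ uniformly in $x$; on the complement the bad event lies in $\{|P|\le\delta T\}$. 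To the latter I would apply the Carbery--Wright anti-concentration inequality for the degree-two polynomial $P$, namely $\mathbf{P}(|P|\le\beta\|P\|_2)\lesssim\sqrt\beta$ with $\|P\|_2=\mathbf{E}[P^2]^{1/2}$ and $\beta=\delta T/\|P\|_2$.

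The hard part, and the only step that is not routine, is the uniformity over the noncompact configuration space, which comes down to the scale-matching lower bound $\|P\|_2\ge c\sqrt h\,T$ with $c=c(N,d,f)>0$ independent of $x$. Spread-out configurations are exactly where this is delicate: if the means of $U-V$ and $U+V$ are both large but nearly orthogonal, then $P$ has small mean, and one must check that its fluctuations, which are of order $\sqrt h$ times the typical magnitudes of $U\mp V$, cannot cancel and still force $\|P\|_2\gtrsim\sqrt h\,T$; this reduces to showing that the quadratic form $\bar W^{T}\Sigma_{S}\bar W+\bar S^{T}\Sigma_{W}\bar S+2\bar W^{T}\Gamma\bar S$ in the means $\bar W=\mathbf{E}[U-V]$, $\bar S=\mathbf{E}[U+V]$ (with $\Sigma_W,\Sigma_S,\Gamma$ the relevant covariances) is bounded below by $c\,h\,(|\bar W|^2+|\bar S|^2)$. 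Granting this bound, $\beta\lesssim\delta/\sqrt h$ and Carbery--Wright gives a bound $\lesssim(\delta/\sqrt h)^{1/2}$, so the total over the finitely many $(f,j,k)$ is at most $\epsilon/2$ once $\delta$ is small. One final point to verify is that $|U|-|V|$ is not almost surely zero; this fails only for the degenerate symmetric vectors with $f_j=f_k=(N+1)/2$ and all other entries zero, for which $c_f$ is the exact midpoint of $Y_j,Y_k$, and such $f$ must be excluded from (equivalently, do not arise among) the configurations that the branching dynamics actually needs to disambiguate.
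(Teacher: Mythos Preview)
Your approach is genuinely different from the paper's, and substantially heavier. The paper fixes a small time $\alpha$, conditions on $\{$no branching in $[0,\alpha]\}$, and then argues directly that for each pair $i\ne j$ the quantity $|X_i(\alpha)-\overline X(\alpha)|-|X_j(\alpha)-\overline X(\alpha)|$ is the difference of two (correlated) Gaussian norms and so has small-ball probability tending to $0$ uniformly in $x$; then it sends $\alpha\downarrow0$ to kill the branching term. No Carbery--Wright, no threshold $T$: the uniformity comes from the elementary fact that for $Z\sim N(\mu,hI_d)$ the density of $|Z|$ is bounded by $C_d/\sqrt h$ independently of $\mu$, so after conditioning on all but one coordinate Gaussian one gets $\mathbf P(\,||Y_j-c_f|-|Y_k-c_f||\le\delta\,)\le C\delta/\sqrt h$ directly. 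Your route via $P=|U|^2-|V|^2$ and Carbery--Wright can be made to work, but it trades this one-line density bound for a two-step argument (tail cut at $T$, then anti-concentration at scale $\delta T$) whose hard step you yourself flag and do not complete.

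On that hard step: your reduction to positive definiteness of the quadratic form $\bar W^{T}\Sigma_{S}\bar W+\bar S^{T}\Sigma_{W}\bar S+2\bar W^{T}\Gamma\bar S$ is almost, but not quite, right. A short computation gives this form as $h\big(|\alpha_j\bar W+\bar S|^2+|\alpha_k\bar W-\bar S|^2+\tfrac{4}{(N+1)^2}\sum_{i\ne j,k}f_i^2|\bar W|^2\big)$ with $\alpha_m=1-2f_m/(N+1)$, and its determinant (as a $2\times2$ form in $(\bar W,\bar S)$) is $(\alpha_j+\alpha_k)^2+\tfrac{8}{(N+1)^2}\sum_{i\ne j,k}f_i^2$. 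This vanishes exactly when $f$ is supported on $\{j,k\}$, so the form is only positive \emph{semi}-definite in general. The saving observation you are missing is that in that case $\bar S=\alpha_j\bar W$ is \emph{forced} by the definition of $\bar S$, and on this constrained subspace the form equals $8\alpha_j^2h|\bar W|^2/(1+\alpha_j^2)\cdot(|\bar W|^2+|\bar S|^2)/(|\bar W|^2+|\bar S|^2)$, which is indeed $\gtrsim h(|\bar W|^2+|\bar S|^2)$ provided $\alpha_j\ne0$, i.e.\ away from the degenerate $f_j=f_k=(N+1)/2$. You also need a separate (easy) line for the compact regime $|\bar W|^2+|\bar S|^2\lesssim h$, where the linear-part variance gives nothing and one instead uses $\|P\|_2^2\ge\mathrm{Var}(W'\!\cdot S')\gtrsim h^2$.

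Your final remark about the degenerate vector $f_j=f_k=(N+1)/2$ (odd $N$) is a genuine observation: for such $f$ one has $|x_j-c_f|=|x_k-c_f|$ identically, so literally no configuration is $\delta$-unambiguous as defined, and the lemma is vacuous. This is a wrinkle in the paper's definition rather than in your argument; the paper's own proof silently works only with the barycentric weight vector and does not confront the issue.
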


The value of reducing to the unambiguous setting is that it allows us to study sequences of branching and killing events for deterministic configurations in order to make deductions about the possible behaviour  of the BBB process. An {\em unambiguous configuration} consists of an unambiguous vector $x=(x_1,\ldots,x_N)$ and a vector $w=(w_1,\ldots,w_N)$ of non-negative integer weights with $\sum_{i=1}^{N}w_{i}=N$. 
A branching event for site $\ell\in [N]$ corresponds to increasing $w_\ell$ by $1$, and a killing event for particle $k\in [N]$ corresponds to decreasing $w_k$ by $1$. Formally, given $x,w$ as above, for $\ell \in [N]$ let 
\begin{equation}\label{e.selection}
k = k(x,w,\ell)=\argmax_{j\in [N], w_j>0}\left|x_{j}-\frac{1}{N+1}\Big(\sum_{1 \le i \le N}w_ix_i + x_\ell \Big)\right|
\end{equation}
and define weights $g(x,w,\ell)=(g_1,\ldots,g_N)$ by 
\[
g_j = w_j + \mathbbm{1}_{\{j=\ell\}} - \mathbbm{1}_{\{j=k\}}\, . 
\]
Note that $\sum_{i=1}^N g_i = N$, so the pair $x, g(x,w,\ell)$ again form an unambiguous configuration. 

Given an unambiguous configuration $x,w$, any sequence $l_1,\ldots,l_m$ of elements of  $[N]$ induces a sequence of weight vectors $w^{(0)},w^{(1)},\ldots,w^{(m)}$, by letting particle $l_i$ branch at step $i$. Formally, set $w^{(0)}=w$, and for $1 \le i \le m$ let $w^{(i)}=g(x,w^{(i-1)},l_i)$. We say $l_1,\ldots,l_m$ is a valid sequence for $x,w$ if $w_{l_i}^{(i-1)}>0$ for all $1 \le i \le m$, i.e. if branching only occurs for particles with positive weight.  
\begin{lem}\label{lem:regen_unambiguous}
Given any unambiguous configuration $x,w$, there exists an integer $m \le (N-1)^2$ and a sequence $l_1,\ldots,l_m \in [N]$ which is valid for $x,w$ such that $w^{(m)}$ contains exactly one non-zero entry. 
\end{lem}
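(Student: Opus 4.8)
The plan is to exhibit a valid sequence that drives all the weight onto a single site, by repeatedly branching at the occupied site closest to the current barycenter. Writing $c=c(w)=\frac1N\sum_{i\in[N]}w_ix_i$ for the barycenter of the weighted configuration, at each step I would let $q=q(w)$ be an occupied site (i.e.\ $w_q>0$) minimizing $|x_q-c|$, and take the next branching index to be $\ell=q$; this is valid since $q$ always has positive weight. The point of the choice is a clean \emph{self-preservation} property: the closest occupied site is never the one killed. Indeed, branching at $q$ shifts the barycenter to $b=\frac{1}{N+1}(\sum_i w_ix_i+x_q)=\frac{N}{N+1}c+\frac{1}{N+1}x_q$, and after translating so that $x_q=0$ one computes, for every occupied $j\neq q$,
\[
|x_j-b|^2-|x_q-b|^2=|x_j|^2-\tfrac{2N}{N+1}\,x_j\cdot c\ \ge\ |x_j|^2\Big(1-\tfrac{N}{N+1}\Big)=\tfrac{1}{N+1}|x_j|^2>0,
\]
using $x_j\cdot c\le \tfrac12|x_j|^2$ (exactly the statement that $q$ is at least as close to $c$ as $x_j$) and $x_j\neq x_q$. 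Hence $q$ is the \emph{closest} occupied site to $b$; since there are at least two occupied sites, $q$ is not the furthest, so $k(x,w,q)\neq q$ in \eqref{e.selection}. Unambiguity of $x$ guarantees the maximizer defining $k$ is unique, so the killed site is well defined.

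Two consequences are immediate and yield termination. First, branching at $q$ only ever adds weight to an already-occupied site and removes weight from a different occupied site, and $q$ is never emptied; so the set of occupied sites is monotone nonincreasing, and once a site has weight $0$ it stays at $0$. In particular there are at most $N-1$ \emph{emptying events} (steps at which the killed site drops to weight $0$) before a single site remains, since the initial support has size at most $N$. Second, the moment of inertia $I(w)=\sum_i w_i|x_i-c(w)|^2$ strictly decreases at every step: the same translate-and-expand computation gives $I(g(x,w,q))-I(w)=-\tfrac{N+1}{N}|x_k|^2+2\,c\cdot x_k<0$, the inequality being precisely the fact that $k$ is farther from $b$ than $q$. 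As there are finitely many weight vectors and $I$ strictly decreases, the process never cycles and must reach a single occupied site; this already proves the qualitative statement (existence of a valid reducing sequence).

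The remaining, and genuinely delicate, point is the quantitative bound $m\le (N-1)^2$. I would organize the branchings into \emph{phases} delimited by the emptying events: there are at most $N-1$ phases, so it suffices to bound each phase by $N-1$ steps. Within a phase the occupied set is constant, so if one could keep branching at a \emph{single} fixed queen $q$ throughout the phase, then, since $q$ is never killed, its weight would increase by exactly one at each step and is capped at $N$; as the total weight off $q$ then decreases by one per step from a value at most $N-1$, some site must empty within $N-1$ steps, ending the phase and giving the bound. The obstacle is exactly this: I must show that a single occupied site can serve as the never-killed queen for an entire phase, i.e.\ that the barycenter-closest site does not change (or at least remains not-furthest, hence safe) until an emptying occurs. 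The natural mechanism is that each branch drags $c$ toward $x_q$ (weight is added at $x_q$ and removed from a far site), so $q$ should only get closer to the barycenter; I expect to prove that $q$ remains strictly closest to $c$ as long as no site is emptied, by tracking the update $c\mapsto c-\tfrac1N x_k$ (after translating $x_q=0$) and showing it cannot pull another occupied site inside $q$ without first emptying the drained site. Making this monotonicity rigorous in $\R^d$—ruling out that a far site on the opposite side of $q$ overtakes it after a single kill—is where I expect the real work to be; the one-step displacement of $c$ having size $|x_k|/N$, together with the strict initial gap, should provide the needed slack.
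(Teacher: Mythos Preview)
Your qualitative argument is correct and pleasantly different from the paper's: the moment-of-inertia Lyapunov function $I(w)=\sum_i w_i|x_i-c(w)|^2$ strictly decreases at each step (your computation, translated to $x_q=0$, checks out), so the process cannot cycle and must eventually collapse. The paper does not use this; it obtains termination and the quantitative bound simultaneously.

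The genuine gap is exactly where you locate it: the bound $m\le (N-1)^2$. Your phase argument needs a single ``queen'' that is never killed for an entire phase, but because you re-select $q$ as the current closest site at every step, you then need to prove that the closest site does not change before an emptying occurs. This is delicate, and in fact unnecessary. The paper's fix is to \emph{not} re-select: at the start of a phase pick $\ell$ as the closest occupied site (your self-preservation computation---the paper's Claim~\ref{claim:close_branching}---gives $k(x,w,\ell)\neq\ell$), then keep branching at this \emph{same} $\ell$ regardless of whether it remains closest. Safety of $\ell$ is maintained by a different, one-step observation (the paper's Claim~\ref{claim:safe_branching}): if you branch at $\ell$, kill $k\neq\ell$, and $k$ still has positive weight afterwards, then branching at $\ell$ again cannot kill $\ell$. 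The proof is a two-line picture: writing $b$ and $b^*$ for the post-branch barycenters before and after the step, one has $b^*=b+\tfrac{1}{N+1}(x_\ell-x_k)$, and since $|x_\ell-b|<|x_k-b|$ this shift only moves $b$ toward $x_\ell$ along the segment, so $|x_\ell-b^*|<|x_k-b^*|$; as $k$ is still occupied, $\ell$ is not the farthest.

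With this in hand, the phase bound is immediate: $w_\ell$ increases by one at every step of the phase, so some site must empty within $N-1$ steps, and there are at most $N-1$ phases. Your strategy of trying to show the closest site is stable would, if it worked, give the same bound, but it is the harder road; the paper's insight is that you only need $\ell$ to stay closer than the \emph{last-killed} site, not than everyone.
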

We say that a sequence $l_1,\ldots,l_m$ as in Lemma~\ref{lem:regen_unambiguous} {\em collapses} the (unambiguous) configuration $x,w$. The proof of Lemma~\ref{lem:regen_unambiguous}, which is the key step in the proof of Harris recurrence, is found in Section~\ref{subsec:deterministic}, below.


To prove Harris recurrence for $\Phi=\rX-\overline{\rX}$, we now argue as follows. Step 1, we wait until the first time $t$ that $X(t)$ is $\delta$-unambiguous (for some small fixed $\delta > 0$); by Lemma~\ref{lem:unambiguous_fast} and the strong Markov property this takes a bounded amount of time. Step 2, we hope that in the time interval $[t,t+1]$, (a) all particles stay within distance $\delta/4$ of their time-$t$ locations, and (b) a sequence of branching events occurs which collapses the configuration $X(t),(1,1,\ldots,1)$. (The all-ones vector represents the fact that at time $t$ there is one particle at each location.) If Step 2 succeeds, then there is $j \in [N]$ such that in $X(t+1)$ all particles lie within distance $\delta/4$ of $X_j(t)$, so all entries of $\Phi(t+1)$ have distance at most $\delta$ from the origin. 

By Lemma~\ref{lem:regen_unambiguous}, there is at least one  sequence consisting of at most $(N-1)^2$ branching events which collapses the configuration, from which it is not hard to see that Step 2(b) succeeds with probability bounded away from zero uniformly over the possible values of $X(t)$. If Step 2 fails, we start over from Step 1. The Markov property then guarantees that both steps will succeed after a \jl{finite} number of trials. This means that from any initial configuration, $\Phi$ will reach a state where all particles are at bounded distance from the origin in \jl{finite} time. Combined with the fact that between branching events, the process $\rX$ behaves like $N$ independent Brownian motions, this allows us to prove Harris recurrence by comparing the law of the process $\Phi$ at a fixed time to the joint law of $N$ independent $d$-dimensional standard Gaussians. 
 The full details of this argument appear in Section~\ref{subsec:HarrisRec}.

\subsection{Related work}\label{sub:related}
The BBB model may be seen as a close relative of the Brunet-Derrida particle systems \cite{MR2299937,MR2365627} and their multidimensional generalizations \cite{MR3784486}. A Brunet-Derrida particle system also consists of a population of a fixed size $N$ undergoing both branching and spatial displacement, with individual trajectories taking values in $\R$. The law of the system is determined by a {\em fitness function} $s:\R \to \R$; when a branching event occurs, particles of lowest fitness are removed from the system to keep the population size constant. (The original papers on the subject \cite{MR2299937,MR2365627} considered discrete-time processes -- branching random walks with selection, in which multiple branching events can occur simultaneously. Much of the subsequent work in the area has focused on branching Brownian motion, as in this paper.) The asymptotic displacements \cite{MR2669438,MR3568046}, evolution of empirical particle densities \cite{MR2932664,MR3986078}, and typical genealogies \cite{MR3077519} of such systems have all received attention in the literature. The paper \cite{MR3327315} considers a related model with branching and barycentric elimination but no particle movement. 

In \cite{MR3784486}, Berestycki and Zhao consider branching Brownian motion with selection in $\R^d$. The model is again specified by a fitness function, $s:\R^d \to \R$; a fixed number $N$ of particles move as independent Brownian motions which independently branch at rate 1. On a branching event, a particle of minimal fitness is removed from the system. Berestycki and Zhao prove, among other results, the following facts. When $s(x)=|x|$, \jl{in the large-time limit,} the particle system asymptotically travels at a constant speed, in a uniformly random direction, in a cloud of particles whose diameter is $O(1)$ in probability. 
When the fitness function is linear, $s(x)=\langle x,v \rangle$ for a fixed $v \in \mathbb{S}^{d-1}$, the cloud of particles asymptotically travels with constant speed in direction $v$. In this case (under some conditions on the time-zero configuration of the particles), the diameter of the cloud of particles in direction $v$ is asymptotically $O(\log N)$ in probability, whereas for any $w \in \mathbb{S}^{d-1}$ with $\langle v,w\rangle=0$, the diameter of the cloud in direction $w$ is at least $O(\log N)^{3/2}$ in probability. 

We also mention some recent and forthcoming works on branching systems with selection, with some similarity to ours. 
\begin{itemize}
\item Berestycki, Brunet, Nolen and Penington \cite{2020berestycki, 2020berestycki2} study a branching Brownian motion \\$((X_i(t))_{i\in [N]}, t\geq 0)$ in $\R^d$ with $N$ particles, with a selection rule that can be specified via the fitness function $s(x)=-|x|$; in other words, on a branching event the particle farthest from the origin is removed. For such a model, \cite{2020berestycki, 2020berestycki2} proves results on the long-time and large-particle behaviour of the empirical distribution of particles in the system, by developing a connection with certain free-boundary problems. We discuss these works further in the conclusion, Section \ref{sec:conclusion}.
\item The works \cite{beckman19thesis,beckman20hydro} consider binary branching Brownian motion $(X_i(t))_{i \in [\mathcal{N}(t)]}$ in $\R^d$ with a variable number of particles, where the branching/death rate at position $x$ at time $t$ has the form 
\[
\mu_t(x) = g\left(\int_\R f(x-y)\nu_t(\mathrm{d}y)\right)\, ; 
\]
here $f,g:\R \to \R$ are Borel functions, and $\nu_t=\mathcal{N}(t)^{-1} \sum_{i=1}^{\mathcal{N}(t)} \delta_{X_i(t)}$ is the empirical measure of the time-$t$ particle distribution. When $\mu_t(x)$ is positive it is interpreted as a branching rate; when this quantity is negative it is interpreted as a death rate. For certain choices of the functions $f$ and $g$, Beckman \cite{beckman19thesis} proves hydrodynamic limit theorems for such systems, showing that the evolution of the empirical density is governed by certain integro-differential equations; other models of the same sort are considered in \cite{beckman20hydro}. 
\end{itemize}

\subsection{Outline of the paper}
The rest of the paper is organized as follows. Theorem \ref{thm:main} is a consequence of Proposition \ref{prop:prop2} for an appropriately defined sequence of times $(\tau_i; i\geq 1)$. In Section \ref{sec:Constr}, we construct such a sequence $(\tau_i; i\geq 1)$ for the BBB process and state several key lemmas about these times, which we then use to demonstrate they satisfy the hypotheses of Proposition \ref{prop:prop2}. We conclude Section \ref{sec:Constr} with the proof of Theorem \ref{thm:main}. In Section \ref{sec:properties}, we prove the technical lemmas used in the construction of the times in Section \ref{sec:Constr}, and also prove Harris Recurrence (Theorem \ref{thm:HarrisRec}), which follows relatively easily from one of these lemmas. Section \ref{sec:conclusion} contains concluding remarks, {\color{cjl} open questions about the process, and suggestions for future research}. Finally, in the Appendix (Section \ref{sec:append}), we present the proof of Proposition \ref{prop:prop2}.

\section{Construction of $(\tau_{i}; i\geq 1)$ and the Proof of Theorem \ref{thm:main}}\label{sec:Constr}
Throughout this subsection, unless otherwise \jl{stated}, we assume that $N\geq 3$. The first portion of this section is devoted to the construction of $(\tau_i; i\geq 1)$, and to introducing two technical lemmas which motivate the construction. We thereafter show that $(\tau_{i}; i\geq 1)$ satisfies properties (1)-(5) of Proposition \ref{prop:prop2}. We conclude with the proof of Theorem \ref{thm:main}. 

In fact, we will focus on constructing a sequence of regeneration times $(\rho_{i}; i\geq 1)$ for which the law of the BBB process has an explicit and fairly simple form (and, after recentering, is the same for all $i\geq 1$). We will then define $(\tau_{i}; i\geq 1)$ from these $(\rho_{i}; i\geq 1)$ (by letting $\tau_{i}:=\rho_{i}-1$), for which we will be able to verify properties (1)-(5) of Proposition \ref{prop:prop2}.

\lou{In the coming section, we work with the construction the BBB process as embedded within a BBM $\rW=(W(t),t \ge 0)$, and recall from (\ref{eq:secondconstruction}) that, with this construction, we have $(X_j(t))_{j\in [N]}=(W_{\mathcal{I}_j(t)}(t))_{j\in [N]}$.}

\subsection{Construction of $(\rho_i; i\geq 1)$ and some key ideas}\label{subsec:ConstrOutline}
For $I\subseteq \mathbb{N}$, given a collection $x = (x_i)_{i \in I}$ of vectors in $\R^d$, the {\em extent} of $x$ is defined to be
\[
E(x) := \sup_{i,j \in I} |x_j-x_i|.
\]
For $t > 0$, we define an event $A_t$, measurable with respect to \lou{$\sigma(X(s),t \le s \le t+1)\subset\sigma(W(s),t \le s \le t+1)$}, as follows. Write $G'\coloneqq\{2,\ldots,\left\lceil \frac{N+1}{2}\right\rceil\}$ and $D'\coloneqq\{\left\lceil \frac{N+1}{2}\right\rceil+1,\ldots,N\}$, and note that $|G'|\geq |D'|$. Let $\rad:=\frac{1}{4(N+1)}$. The event $A_t$ occurs if and only if the following events all take place.

\begin{list}{ (\theatscan)}
{
\usecounter{atscan}
\setlength{\topsep}{1.5ex plus 0.2ex minus 0.2ex}
\setlength{\labelwidth}{1.2cm}
\setlength{\leftmargin}{1.5cm}
\setlength{\labelsep}{0.3cm}
\setlength{\rightmargin}{0.5cm}
\setlength{\parsep}{0.5ex plus 0.2ex minus 0.1ex}
\setlength{\itemsep}{0ex plus 0.2ex}
}
\item \label{e.at1} There are no branching events in $\rX$ in the time interval $[t,t+1]$. 
\item \label{e.at2} $X_j(t+1)-\overline{X}(t)\in B\left(-5e_1,\rad\right)$ for all $j\in G'$, and $X_j(t+1)-\overline{X}(t)\in B\left(5e_1, \rad\right)$ for all $j\in D'$.
\item \label{e.at3} $X_1(t+1)-\overline{X}(t)\in B\left(\gamma, \rad\right)$, where 
\begin{equation*}\label{e.gadef}
\gamma\coloneqq \frac{1}{N-1}\left(-5\left\lceil\frac{N-1}{2}\right\rceil+5\left\lfloor\frac{N-1}{2}\right\rfloor\right)e_{1}=\begin{cases}
0&\text{$N$ odd,}\\
-\frac{5}{N-1}e_{1}&\text{$N$ even}
\end{cases}
\end{equation*} is roughly the position of the time-$(t+1)$ barycenter (reshifted by $\overline{X}(t)$) of the particles in $G'\cup D'$ along the direction of the standard unit vector $e_1=(1,0,\cdots, 0)^T$. 
\end{list}

In words, $A_{t}=A_{t,1}\cap A_{t,2}\cap A_{t,3}$ is the event that during the time interval $[t,t+1]$, no branchings occur and no single trajectory makes a large displacement; particle $X_{1}$ moves to a position close to the time-$(t+1)$ barycenter; and the remaining particles cluster into two roughly equal groups, both with small extent and both reasonably far from the barycenter and from each other. The shift by $\overline{X}(t)$ is simply an adjustment to make the event easier to describe (by ensuring that the starting particle configuration has barycenter located at 0). Moreover, the three clusters of particles with indices in $G'$, $D'$, and $\{1\}$ are approximately collinear. 

{\color{cjl}
Next, for $t\geq 0$, we define an event $B_t$, measurable with respect to $\sigma(W(s),t+1 \le s \le t+2)$. Before proceeding to details, we note that in this definition the embedding of the BBB within the BBM is crucial, as we shall impose constraints on particle trajectories after the particles have already died within the BBB process. The reason for such constraints is that they allow us to gain a degree of independence, which is necessary for establishing a renewal structure using the regeneration times.

The event $B_t$ occurs if and only if the following events both take place. 
\begin{list}{ (\thebtscan)}
{
\usecounter{btscan}
\setlength{\topsep}{1.5ex plus 0.2ex minus 0.2ex}
\setlength{\labelwidth}{1.2cm}
\setlength{\leftmargin}{1.5cm}
\setlength{\labelsep}{0.3cm}
\setlength{\rightmargin}{0.5cm}
\setlength{\parsep}{0.5ex plus 0.2ex minus 0.1ex}
\setlength{\itemsep}{0ex plus 0.2ex}
}

    \item \label{e.bt1} In the time interval $[t+1,t+2]$, particle $X_1(t+1)=W_{\cI_1(t+1)}(t+1)$ and its descendants branch at least $N-1$ times. Moreover, for $i \in \{2,3,\ldots,N\}$, the particle $W_{\cI_i(t+1)}(t+1)$ does not branch in the time interval $[t+1,t+2]$. 
        \item \label{e.bt2} For all $j \in [N]$, all $s \in [t+1,t+2]$ and all $k \in \cN(s)$, if $W_k(s)$ is a descendant of $W_{\cI_j(t+1)}(t+1)$ then $|W_k(s)-W_{\cI_j(t+1)}(t+1)| \le \rad$.
\end{list}

The event $B_{t,1}$ in particular requires that in the BBB process, only $X_1(t+1)$ and its descendants branch in the time interval $[t+1,t+2]$; but it also requires that there is no branching among BBM descendants of the other BBB individuals on this time interval, even if those individuals have died within the BBB process. Likewise, the event $B_{t,2}$ requires that on the time interval $[t+1,t+2]$, every BBM particle remains at distance less than or equal to $\rad$ from the location of its time-$(t+1)$ ancestor, if that ancestor was a BBB particle at time $t+1$. It therefore can constrain the motion of BBM descendants of BBB particles even after their deaths within the BBB process.
}

Although the event $B_{t}$ only requires certain branching events to occur and minimal movement of the particles, we now show that the event $A_{t}\cap B_{t}$ forces all particles besides $X_{1}$ and its descendants to be killed in the time interval $[t+1, t+2]$.

We introduce $\cS$, the set of all possible configurations $x\in \R^{d\times N}$ such that there exist disjoint sets $G, C$ and $D$ with $C\not = \emptyset$, and $G\cup C\cup D=[N]$ with 
\begin{enumerate}
\item[(a)] $x_i\in B(\gamma, 2\rad)$ for all $i\in C$, where $\gamma$ was defined by \eqref{e.at3}.
\item[(b)] $x_j\in B(-5e_1, 2\rad)$ for all $j\in G$, and $x_j\in B(5e_1, 2\rad)$ for all $j\in D$.
\item[(c)] $|G\cup C|\geq  \left\lceil\frac{N-1}{2}\right\rceil+1$ and $|D\cup C|\geq \left\lfloor\frac{N-1}{2}\right\rfloor+1$.
\end{enumerate}
Conditioned on the event $A_{t}\cap B_{t}$, we note that $X(s)-\overline{X}(t)\in \mathcal{S}$ for all $s\in [t+1, t+2]$. Therefore, to determine which particles of $\rX$ get killed in time $[t+1, t+2]$, it suffices to show that for all configurations belonging to $\mathcal{S}$, a branching event in $C$ implies that, under the same killing rules as BBB (namely that the particle farthest away from the barycenter is killed), the killing event occurs in $G\sqcup D$. 



\begin{figure}
  \centering
  \includegraphics[width=\linewidth]{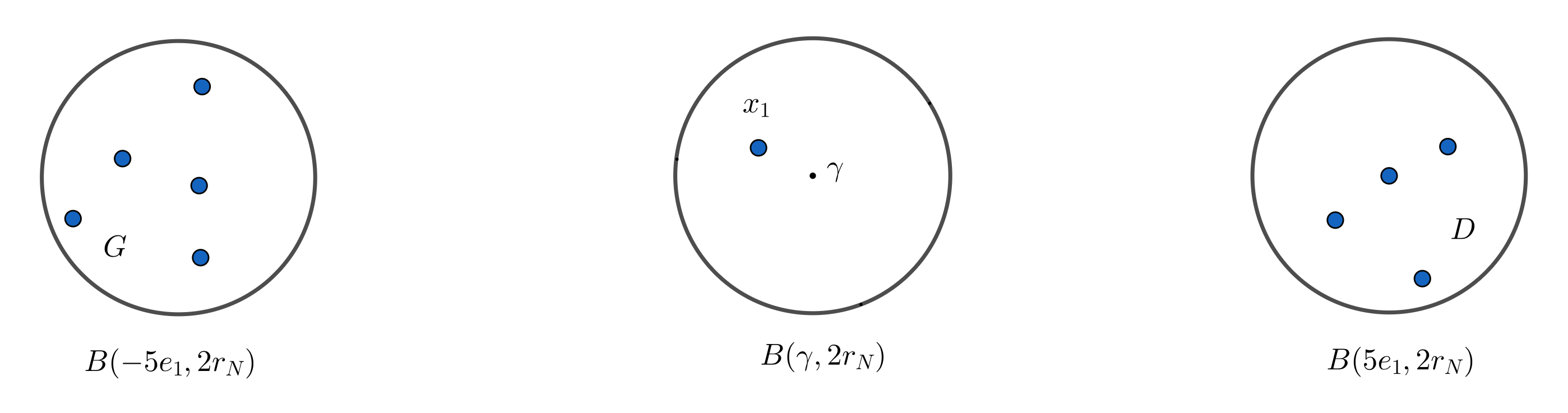}
  \caption{Example of particle configuration $x=(x_i)_{i\in[N]}$ in $\cS$ with $C=\{1\}$ which could arise just after event $A$ has occurred. The particles in the ball on the left are $(x_i)_{i\in G}$, and those in the ball on the right are $(x_i)_{i\in D}$.}
  \label{fig:figInS} 
\end{figure} 

\

\begin{lem}\label{l.no_kill_center}
For any configuration $x\in \cS$ \jl{with $G\sqcup D\neq \emptyset$,} , a branching event in $C$ implies that the killed particle belongs to $G\sqcup D$. 
\end{lem}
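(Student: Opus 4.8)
The plan is to determine where the post-branching barycenter sits and then show that \emph{every} particle of $C$ is strictly closer to it than the farthest particle of $G \sqcup D$. Writing $\ell \in C$ for the branching site, the barycenter after branching is $b := \frac{1}{N+1}\big(\sum_{i \in [N]} x_i + x_\ell\big)$, and the killed particle is $\argmax_{j \in [N]}|x_j - b|$; so it suffices to prove
\[
\max_{j \in G \sqcup D} |x_j - b| > \max_{i \in C} |x_i - b|.
\]
My first step is to pass to an idealized collinear configuration: replace each $x_i$ by the center $\hat x_i$ of its cluster (namely $-5e_1$, $\gamma$, or $5e_1$ according as $i \in G$, $C$, or $D$), and let $\hat b$ be the corresponding idealized barycenter. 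Since every $x_i$ lies within $2\rad$ of $\hat x_i$, we have $|x_i - \hat x_i| \le 2\rad$ and hence $|b - \hat b| \le 2\rad$, so each distance $|x_j - b|$ differs from $|\hat x_j - \hat b|$ by at most $4\rad$. It therefore suffices to exhibit a particle of $G \sqcup D$ whose idealized distance to $\hat b$ exceeds the (common) idealized distance of the $C$-particles by more than $8\rad = \tfrac{2}{N+1}$.

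For the idealized configuration everything is one-dimensional along $e_1$. Writing $g=|G|,c=|C|,d=|D|$ and letting $\hat b_1$ be the $e_1$-coordinate of $\hat b$, one has $\hat b_1 = \frac{1}{N+1}\big(-5g + (c+1)\gamma_1 + 5d\big)$, where $\gamma_1 := e_1^T\gamma \in [-5/3,0]$; in particular $\gamma$ lies strictly between the two outer cluster centers. I would then split into cases. If both $G$ and $D$ are nonempty, the outer cluster on the far side of $\hat b$ is nonempty and sits at idealized distance $5 + |\hat b_1|$, while the $C$-particles sit at distance $|\gamma_1 - \hat b_1| \le |\gamma_1| + |\hat b_1|$; the gap is at least $5 - |\gamma_1| \ge 10/3$. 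If instead one outer cluster is empty — say $D = \emptyset$, so that $g \ge 1$ by the hypothesis $G \sqcup D \neq \emptyset$ — then condition (c) gives $c = |D \cup C| \ge \lfloor (N-1)/2\rfloor + 1$, whence $c + 1 > \lceil (N-1)/2\rceil \ge g$; a direct computation gives $5 + \hat b_1 = \frac{(c+1)(5+\gamma_1)}{N+1}$ and $|\gamma_1 - \hat b_1| = \frac{g(5+\gamma_1)}{N+1}$, so the $G$-distance minus the $C$-distance equals $\frac{(c+1-g)(5+\gamma_1)}{N+1} \ge \frac{10/3}{N+1}$. The case $G = \emptyset$ is symmetric.

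In every case the idealized gap is at least $\frac{10/3}{N+1} > \frac{2}{N+1} = 8\rad$, so the desired strict inequality survives the passage back to the true configuration, and the killed particle lies in $G \sqcup D$. I expect the main obstacle to be precisely the case in which one outer cluster is empty: there the barycenter is dragged toward the surviving outer cluster, so that \emph{a priori} a $C$-particle could end up farthest, and it is exactly condition (c) — which forces $C$ to be large enough that $c+1$ dominates each outer cluster size — that rules this out. Indeed, without (c) the statement is false, as the configuration $d=0$, $c=1$, $g=N-1$ shows, since then $\hat b_1 \approx -5$ and the (unique) $C$-particle becomes the farthest. The hypothesis $G \sqcup D \neq \emptyset$ is needed simply to guarantee that there is a particle available in $G \sqcup D$ to be killed.
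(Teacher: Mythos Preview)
Your argument is correct, and the underlying idea --- pass to the idealized collinear configuration, compute distances there, and check that the perturbation of size at most $2\rad$ per particle cannot reverse the ordering --- is the same as the paper's. The execution differs somewhat: the paper handles the case $G,D\neq\emptyset$ by a qualitative convex-hull observation (the barycenter lies between the two outer clusters, so one of them is automatically farthest), and treats the degenerate case $D=\emptyset$ by locating the barycenter in a halfspace and comparing. You instead set up a single quantitative framework, bounding $\big||x_j-b|-|\hat x_j-\hat b|\big|\le 4\rad$ uniformly and then producing in each case an explicit idealized gap of at least $\tfrac{10/3}{N+1}>8\rad$. Your version has the advantage of making the role of the radius $\rad=\tfrac{1}{4(N+1)}$ transparent and of showing exactly how much slack there is; the paper's version is shorter in the nondegenerate case but leaves the numerical checks more implicit. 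Your closing remark that condition~(c) is essential in the one-empty-cluster case, together with the counterexample $c=1,\,g=N-1$, is a nice observation that the paper does not make explicit.
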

\begin{proof}
Suppose first that \jl{both} $G, D\not= \emptyset$ and that the particle with index $\ell\in C$ branches. Let $\overline{x}^{N+1}$ denote the barycenter of the system with $N+1$ particles (before a killing event occurs). Then $\overline{x}^{N+1}$ lies in the convex hull of the particles $\{x_j\}_{j\in [N]}\cup \{x_{\ell}\}$. Since the clusters of particles $G$, $C$, and $D$ are roughly collinear with $C$ located in between $G$ and $D$, it follows that the resulting killing occurs in either $G$ or $D$. 

We may therefore assume without loss of generality that $D=\emptyset$, $|C|\geq \left\lfloor\frac{N-1}{2}\right\rfloor+1$, and $|G|\leq \left\lceil \frac{N-1}{2}\right\rceil$. Upon a branching event in $C$, we observe that for the system with $N+1$ particles (before a killing occurs), $|C|>|G|$.  A small computation then shows that $\overline{x}^{(N+1)}$ lies in the halfspace $\left\{y\in \R^d:y\cdot e_1\geq \iota - 2\rad \right\}$, where 
\[
\iota\coloneqq \frac{1}{N+1}\left[-5\left\lceil\frac{N-1}{2}\right\rceil-\left(\left\lfloor\frac{N-1}{2}\right\rfloor+2\right)\gamma\right]=\begin{cases}
-\frac{5}{2}\frac{N-1}{N+1}&\text{$N\geq3$ odd,}\\
-\frac{5}{2}\frac{N^2+2}{N^2-1}&\text{$N\geq3$ even.}
\end{cases}
\]
The point $\iota e_{1}$ roughly corresponds to the barycenter of the $(N+1)$ system of particles after a branching event in $C$. Using now that for all $i\in G$, $x_{i}\in B\left(-5e_{1}, 2\rad\right)$ and for all $j\in C$, $x_{j}\in B\left(\gamma, 2\rad\right)$, still with $\rad=\frac{1}{4(N+1)}$, we see that 
\begin{equation*}
\min_{i\in G'} |x_{i}-\overline{x}^{(N+1)}|> \max_{j\in C'}|x_{j}-\overline{x}^{(N+1)}|.
\end{equation*}
Therefore the particle being killed must belong to $G$.

\end{proof}

\begin{remark}\label{r.queen}
Conditioned on $A_{t}\cap B_{t}$, since $X(s)-\overline{X}(t)\in \mathcal{S}$ for all $s\in [t+1, t+2]$, it follows by Lemma \ref{l.no_kill_center} that all particles with indices \jl{in $G\sqcup D$ die}, so at time $t+2$ all particles are descendants of $X_1(t+1)$. The resulting particle configuration looks like the one shown in Figure~\ref{fig:figAfterB}.
\end{remark}
\begin{remark}
The proof of Lemma~\ref{l.no_kill_center} can be seen as a simple case of using ``deterministic configurations'' to deduce which killing events occur, when the particles are allowed very limited movement. A more complex formulation of similar ideas appears in the proof of Lemma~\ref{lem:lem10}.
\end{remark}


\begin{figure}
  \centering
  \includegraphics[width=\linewidth]{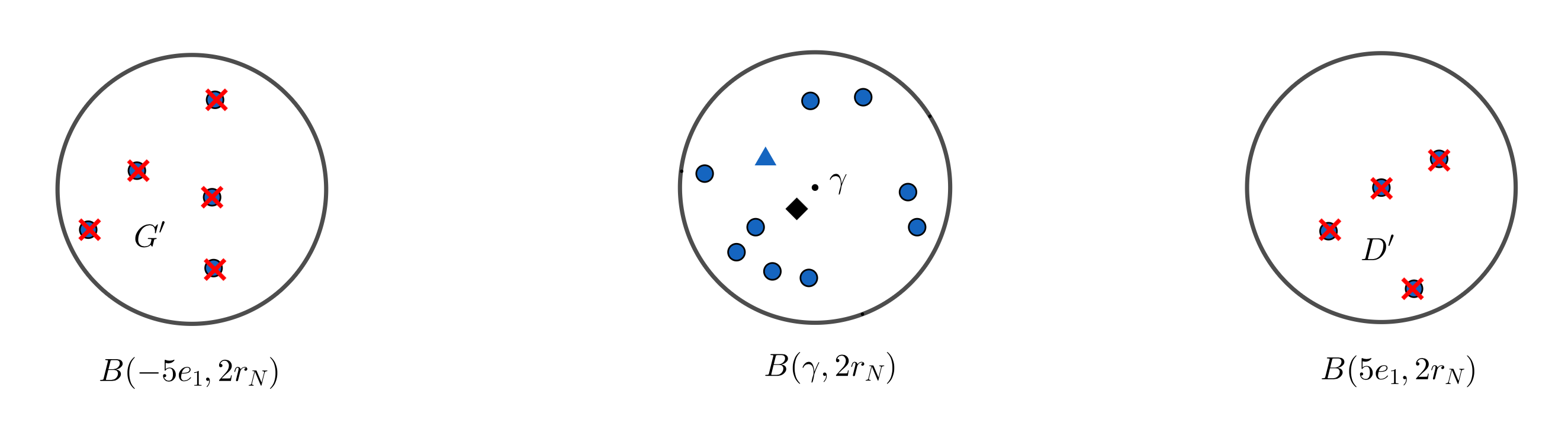}
  \caption{Example of a time-$(t+2)$ configuration after $A_t\cap B_t$ has occurred. The crossed-out particles indicate that when $A_t\cap B_t$ occurs, at time $t+2$, the particles in $(X_i(t+1))_{i\in G'\cup D'}$ live in $B(\gamma, 2\rad)$ (by Lemma~\ref{l.no_kill_center}). The blue triangle is the location of $X_1(t+2)-\overline{X}(t)$, and the black square is the location of $\overline{X}(t+2)-\overline{X}(t)$. }
  \label{fig:figAfterB} 
\end{figure}

Observe that the events $A_t$ and $B_t$ are invariant under translations in $\R^d$. Namely, $A_t$ and $B_t$ occur for 
\lou{$\rW$} if and only if they occur for 
\lou{$\rW+y\coloneqq (W(t)+y,t \ge 0)$} for every $y\in \R^d$.
Previously $\rW$ and $\rW+y$ were $\rX$ and $\rX+y$.

For any $L>0$, define 
\[
C_L\coloneqq\{x\in \R^{d\times N}: E(x)\leq L\},
\] the set of deterministic $N$-particle configurations $(x_i)_{i\in [N]}$ with extent at most $L$. Denote $A\coloneqq A_0$ and $B\coloneqq B_0$. It turns out that as long as we start from configurations of bounded extent, there is a uniformly positive probability that $A\cap B$ occurs. 

\begin{lem} \label{lem:lem6}
For all $L>0$, there exists $\epsilon=\epsilon_{L,N}>0$ such that \[\inf_{x\in C_{L}}\bP_{x}(A\cap B)>\epsilon.\]
\end{lem}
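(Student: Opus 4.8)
The plan is to use the Markov property of the embedded branching Brownian motion $\rW$ at time $1$ to decouple the two events. Recall that $A=A_0$ is $\cG_1$-measurable (indeed $\sigma(X(s),0\le s\le 1)$-measurable), whereas $B=B_0$ is $\sigma(W(s),1\le s\le 2)$-measurable. Writing
\[
\bP_x(A\cap B)=\bE_x\!\left[\I{A}\,\bP_x(B\mid \cG_1)\right],
\]
it therefore suffices to (i) bound $\bP_x(A)$ below uniformly over $x\in C_L$, and (ii) show that $\bP_x(B\mid \cG_1)$ is bounded below by a positive constant depending only on $N$ and $d$; in fact I expect it to equal an explicit constant, independent of the configuration at time $1$.

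For (i), I would work with the first construction. On the event $A_{0,1}$ that no branching occurs in $[0,1]$ --- which has probability exactly $e^{-N}$ and is independent of the driving Brownian motions $(\rB_i)_{i\in[N]}$ --- no killing takes place either, so $\cI(1)=(1,\ldots,N)$ and $X_j(1)=x_j+(B_j(1)-B_j(0))$ for each $j$, with the increments $B_j(1)-B_j(0)$ independent standard $d$-dimensional Gaussians. The events $A_{0,2}$ and $A_{0,3}$ then ask that each recentered increment $X_j(1)-\overline{X}(0)=(x_j-\overline{x})+(B_j(1)-B_j(0))$ fall in a ball of radius $\rad$ about one of the fixed targets $-5e_1$, $5e_1$, $\gamma$. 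Since $x\in C_L$ forces $|x_j-\overline{x}|\le E(x)\le L$ for every $j$, each target ball is centered (in increment coordinates) within distance $L+5$ of the origin, and the standard Gaussian density is bounded below on $B(0,L+5+\rad)$; hence each of these $N$ independent landing events has probability at least some $c=c(L,N,d)>0$. Multiplying, $\bP_x(A)\ge e^{-N}c^N=:\epsilon_1>0$, uniformly over $C_L$.

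For (ii), the key point is that, conditionally on $\cG_1$ and by the branching property, the evolution on $[1,2]$ consists of $N$ independent branching Brownian motions, the $j$-th issued from $W_{\cI_j(1)}(1)=W_j(1)$ (using again that on $A$ we have $\cI(1)=(1,\ldots,N)$). Crucially, $B$ factorizes as $B=\bigcap_{j\in[N]}E_j$, where $E_j$ concerns only the $j$-th lineage: $E_1$ asks that lineage $1$ branch at least $N-1$ times while all its descendants remain within distance $\rad$ of $W_1(1)$, and for $i\ge 2$, $E_i$ asks that lineage $i$ never branch and stay within distance $\rad$ of $W_i(1)$. Each $E_j$ depends on the $j$-th lineage only through its branching structure and the displacements of descendants relative to their common ancestor's time-$1$ position, both of which are invariant under translating the starting point. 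Consequently $\bP_x(E_j\mid \cG_1)$ is a deterministic constant $c_j$, and by conditional independence $\bP_x(B\mid \cG_1)=\prod_j c_j=:\epsilon_2$. Each $c_j$ is strictly positive --- $c_1>0$ because one can realize exactly $N-1$ branchings at prescribed times with the whole (finite) family staying confined, and $c_i=e^{-1}\,\bP_0(\sup_{0\le s\le 1}|B_s|\le \rad)>0$ for $i\ge 2$ --- so $\epsilon_2=\epsilon_2(N,d)>0$.

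Combining, $\bP_x(A\cap B)=\bE_x[\I{A}\,\bP_x(B\mid \cG_1)]=\epsilon_2\,\bP_x(A)\ge \epsilon_1\epsilon_2=:\epsilon>0$ uniformly over $x\in C_L$. I expect the main obstacle to be step (ii): one must argue carefully, using the embedded-BBM construction and the branching property, that $B$ genuinely splits into independent per-lineage events whose probabilities do not depend on the (random) time-$1$ configuration. This is precisely the purpose of phrasing $B$ in terms of descendants of the time-$(t+1)$ ancestors and confining each descendant relative to its own ancestor, and it is where the constraints on already-dead BBB particles (which nonetheless persist in the BBM) must be tracked. By contrast, step (i) is routine once one notes that bounded extent yields a uniform lower bound on the Gaussian landing probabilities.
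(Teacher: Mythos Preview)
Your argument is correct and follows essentially the same two-step decomposition as the paper: bound $\bP_x(A)$ from below uniformly over $C_L$ (the paper's Lemma~\ref{lem:lem3}), then bound the conditional probability of $B$ given what has happened up to time $1$ (the paper's Lemma~\ref{lem:lem4}), and multiply. Your step (i) is identical in spirit to the paper's.

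Your step (ii) is actually slightly cleaner than the paper's treatment. The paper applies the Markov property at time $1$ to reduce to $\inf_{y\in\cS'}\bP_y(\tilde B)$, where $\cS'$ is the explicit set of possible time-$1$ configurations on $A$, and then factorizes $\tilde B$ into five events $U_1,\ldots,U_5$ that are bounded below one by one. You instead observe directly that, by the branching property of the BBM and the translation-invariance built into the definition of $B$ (each lineage is constrained only through its branching pattern and displacements relative to its own time-$1$ ancestor), the conditional probability $\bP_x(B\mid\cG_1)$ is in fact equal to a fixed constant $\epsilon_2=\epsilon_2(N,d)$ on $A$, not merely bounded below. This yields the exact identity $\bP_x(A\cap B)=\epsilon_2\,\bP_x(A)$ rather than an inequality. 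Both arguments exploit the same structural feature of $B$; yours just packages it more efficiently.
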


Since the lower bound in Lemma~\ref{lem:lem6} depends on $L$, we will want to show that regardless of the extent of the starting configuration, the BBB reaches a state with extent $L$ in finite time. This is a consequence of the next lemma.

\begin{lem}\label{lem:lem10} \jl{For $T^{(L)}\coloneqq \inf\{\jl{t\geq 1}: E(X(t))\leq L\}$, we have \[\lim_{L\to\infty}\sup_{x\in \R^{d\times N}}\bP_x\left(T^{(L)}>L\right)=0.\]}
\end{lem}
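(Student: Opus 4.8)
The plan is to prove the (stronger) fact that $T^{(L)}$ has exponential tails with a rate that is uniform in the starting configuration, via a renewal argument assembled from the two deterministic-configuration lemmas. Concretely, I would first establish a single-trial estimate: there is a constant $p=p(N,d)>0$ such that, from \emph{any} configuration $x\in\R^{d\times N}$, with probability at least $p$ the process $\rX$ reaches a configuration of extent at most $\delta/2$ at some time in $[1,2]$, where $\delta>0$ is a fixed parameter chosen below. Because the occurrence of $\{E(X(s))\le \delta/2\}$ at a time $s\in[1,2]$ certifies $T^{(L)}\le s\le 2$ as soon as $L\ge \delta/2$, iterating this estimate over successive windows of length $2$ and invoking the strong Markov property will stochastically dominate $T^{(L)}$ by $2G$, where $G$ is geometric with success probability $p$; hence $\sup_{x}\bP_x(T^{(L)}>L)\le (1-p)^{\lfloor L/2\rfloor}\to 0$, which is exactly the claim (and more).

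The single-trial estimate splits into two steps. Fix $\eps=1/2$ and let $\delta=\delta_\eps$ be as in Lemma~\ref{lem:unambiguous_fast}, so that from any $x$ the process reaches a $\delta$-unambiguous configuration before time $1$ with probability at least $1-\eps=1/2$; on this event set $t=\xi_\delta<1$ and condition on $X(t)$. The second step runs the \emph{collapse}: by Lemma~\ref{lem:regen_unambiguous} the (necessarily unambiguous) pair consisting of $X(t)$ and the all-ones weight vector admits a valid collapsing sequence $l_1,\ldots,l_m$ with $m\le (N-1)^2$, after which all weight sits at a single site $X_j(t)$. Working in the BBM embedding, I would then require, on the window $[t,t+1]$, that (a) the first $m$ branching events of $\rX$ occur at the prescribed sites $l_1,\ldots,l_m$ in order, and (b) every particle of the BBM $\rW$ -- including all descendants of BBB particles, whether or not they survive in $\rX$ -- stays within distance $\delta/4$ of the time-$t$ position of its site throughout $[t,t+1]$.

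The heart of the argument, and the main obstacle, is to show that this favorable event has probability bounded below by a constant $q=q(N,d,\delta)>0$ that does \emph{not} depend on $x$, and in particular not on the (possibly enormous) extent of $X(t)$. Two ingredients make this work. First, the robustness of $\delta$-unambiguity recorded after \eqref{e.defdelta}: as long as all particles remain within $\delta/4$ of the fixed $\delta$-unambiguous configuration $X(t)$, the killing rule of the stochastic process selects exactly the particle predicted by the deterministic map $g$, so that (a) together with (b) genuinely realizes the deterministic collapse of Lemma~\ref{lem:regen_unambiguous}. Second, the probabilities of (a) and (b) are each bounded below uniformly in $x$: the collapsing sequence has length $m\le (N-1)^2$ and each successive branching lands at a prescribed site with probability at least $1/N$ (that site carrying positive weight, by validity), so (a) has probability at least $(1/N)^{(N-1)^2}$ times the probability of seeing at least $m$ branchings in unit time; and (b) constrains at most $N+(N-1)^2$ Brownian lineage segments to stay within $\delta/4$ of fixed reference points over a window of length at most $2$, an event whose probability depends only on $N,d,\delta$ by spatial homogeneity of Brownian motion and is independent of the branching data. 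Multiplying these bounds gives $q>0$ uniform in $x$, and on the resulting event all surviving particles descend from the single site $X_j(t)$ and lie within $\delta/4$ of $X_j(t)$ at time $t+1\ge 1$, so $E(X(t+1))\le \delta/2$. Setting $p:=(1-\eps)\,q>0$ and feeding this single-trial estimate into the renewal scheme of the first paragraph then completes the proof.
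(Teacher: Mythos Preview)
Your approach is essentially the same as the paper's: reach a $\delta$-unambiguous configuration via Lemma~\ref{lem:unambiguous_fast}, then realize the collapsing sequence of Lemma~\ref{lem:regen_unambiguous} on the next unit interval while keeping particles close to their reference sites, and iterate via the strong Markov property over windows of length $2$. The paper phrases the iteration slightly differently (first reducing to $\sup_x\bP_x(T^{(2)}>2)<1$ via the monotonicity $T^{(2\lambda)}\le T^{(2)}$), but the substance is identical.

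Two details in your justification of the uniform lower bound $q$ deserve tightening. First, your count of ``at most $N+(N-1)^2$ Brownian lineage segments'' for event (b) is off if you genuinely work in the BBM embedding, since the BBM population is unbounded; the clean fix is either to bound (b) as a product over the $N$ initial particles of the single-tree event ``all BBM descendants stay within $\delta/4$ for unit time'' (positive and translation-invariant), or---as the paper does via its event $H_{\delta/8}$---to constrain only the $N$ BBB particles relative to their time-$t$ ancestors. Second, (a) and (b) are not literally independent: which BBB particle is killed (and hence which indices sit at site $l_i$ when the $i$-th branching fires) depends on positions. What is true is that, conditionally on (b), the site-level killing is deterministic by $\delta$-unambiguity, so the conditional probability that $\cU_i$ lands among the $w^{(i-1)}_{l_i}\ge 1$ indices at site $l_i$ is at least $1/N$; chaining gives $\bP(\text{(a)}\mid\text{(b)},\ \ge m\text{ branchings})\ge N^{-(N-1)^2}$, which is the bound you want. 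With these two corrections your sketch is a complete proof.
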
 
\begin{remark}\label{r.HarrisRec}
\begin{enumerate}
\item[(i)] Lemma~\ref{lem:lem10} holds for all $N\geq 1$. The cases $N=1,2$ are trivial, and the case $N\geq 3$ will be proved in Section \ref{subsec:HarrisRec}.
\item[(ii)] It is a direct consequence of Lemma~\ref{lem:lem10} that $T^{(L)}$, the time to reach a configuration with extent at most $L$, has exponential tails. Indeed, it suffices to choose $L>0$ such that $\sup_{x\in \R^{d\times N}}\bP_x(T^{(L)}>L)<\frac{1}{2}$ and then apply the Markov property.
\end{enumerate}
\end{remark}
We will see that Lemma~\ref{lem:lem10} implies Harris recurrence (Theorem \ref{thm:HarrisRec}). Equipped with Lemmas \ref{lem:lem6} and \ref{lem:lem10}, we can now construct the stopping times $(\rho_i;i\geq 1)$ as follows. Fix $L \ge 2\rad$ such that $\sup_{x\in \R^{d\times N}}\bP_x(T^{(L)}>L)<\frac{1}{2}$, and set $T_1\coloneqq T^{(L)}=\inf\{t \ge 1: E(X(t)) \le L\}$. For each $i\geq 1$, recursively define the stopping times
\begin{equation}\label{eq:ti_def}
T_{i+1}\coloneqq \inf\{t\geq T_i+2:E(X(t))\leq L\},\;\;\;i\geq 1,
\end{equation} 
and let
\[
I_1\coloneqq\inf\{i\geq 1:A_{T_i}\cap B_{T_i}\textnormal{ occurs}\}. 
\] 
By Remark \ref{r.HarrisRec} (ii), and the strong Markov property, $T_i<\infty$ almost surely for each $i\geq 1$. By the strong Markov property and Lemma~\ref{lem:lem6}, $A_{T_i}\cap B_{T_i}$ has positive probability uniformly over all possible initial configurations for each $i\geq 1$. In particular, an application of the second Borel-Cantelli lemma implies that $I_1<\infty$ $\bP_x$-almost surely. Thus, we can define $\tau_1\coloneqq T_{I_1}+1$ and $\rho_1\coloneqq T_{I_1}+2$; both $\tau_1$ and $\rho_1$ are $\bP_{x}$-a.s. finite.  By construction, we observe that at time $\rho_1$, the events $A_{T_{I_1}}$ and $B_{T_{I_1}}$ have occurred (as depicted in Figure \ref{fig:figAfterB}), and at time $\rho_1-2$, the particles had extent at most $L$. 

Next, for each integer $l> 1$, inductively set
\[
I_{l}\coloneqq \inf\{i>I_{l-1}:A_{T_i}\cap B_{T_i}\text{ occurs}\}, 
\]
and let 
\begin{align}
\tau_l& \coloneqq T_{I_{l}}+1\, ,\mbox{ and } \label{eq:tau_def}\\
\rho_l& \coloneqq T_{I_{l}}+2. \notag
\end{align}
At time $\tau_l$, the event $A_{T_{I_{l}}}$ has just occurred, and the event $B_{T_{I_{l}}}$ is about to occur; by time $\rho_l$ the event $B_{T_{I_{l}}}$ has also occurred. \lou{This in particular implies that at time $\rho_l=T_{I_l}+2$, the extent is less than $2\rad$, so since $L \ge 2\rad$ we have $T_{I_l+1}:=  \inf\{t\geq T_{I_l}+2:E(X(t))\leq L\} = T_{I_l}+2=\rho_l$.} Note that both $(\tau_i;i  \ge 1)$ and $(\rho_i;i\geq 1)$ are strictly increasing sequences, and that $(\rho_i;i\geq 1)$ are stopping times with respect to the filtration $(\cG_t,t \ge 0)$. 

\jl{For use in future proofs, we also introduce the stopping time 
\begin{equation*}
\tilde{T}^{(L)}:=\inf\left\{t\geq 0: E(X(t))\leq L\right\}, 
\end{equation*}
which can occur at time 0. The same argument as in the proof of Lemma \ref{lem:lem10} but with $\tilde T^{(L)}$ instead of $T^{(L)}$ shows that 
\begin{equation}\label{tildeTL}
\lim_{L\to\infty}\sup_{x\in \R^{d\times N}}\bP_x(\tilde T^{(L)}>L)=0.
\end{equation}
Similar to above, we set $\tilde{T}_1=\tilde{T}^{(L)}$, and recursively construct 
\begin{equation*}
\tilde{T}_{i+1}:=\inf\left\{t\geq \tilde{T}_{i}+2: E(X(t))\leq L\right\},\quad i\geq 1.
\end{equation*}
Letting 
\begin{equation*}
\tilde{I}_{1}:=\inf\left\{i\geq1: \text{$A_{\tilde{T}_{i}}\cap B_{\tilde{T}_{i}}$ occurs}\right\},
\end{equation*}
we define 
\begin{align}\label{e.tautildedef}
&\tilde{\tau}:=\tilde{T}_{\tilde{I}_{1}}+1\\
&\tilde{\rho}:=\tilde{T}_{\tilde{I}_{1}}+2\notag. 
\end{align}}

We now have the following key lemma, which guarantees that the times $\left(\tau_{i}; i\geq 1\right)$ satisfy the hypotheses of Proposition \ref{prop:prop2}. 
\begin{lem}\label{lem:main}
Assume $N\geq 3$. Let $\left(\tau_{i}; i\geq 1\right)$ be defined as above. Then for any $x \in \R^{d \times N}$, 
under $\bP_x$, $\rX$ and $\left(\tau_{i}; i\geq 1\right)$ satisfy the hypotheses of Proposition \ref{prop:prop2}.\end{lem}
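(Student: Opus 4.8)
The plan is to establish a renewal structure anchored at the regeneration times $\rho_i = \tau_i + 1$ and then read off the five conditions of Proposition~\ref{prop:prop2} from it. The heart of the argument is the following claim, which I would prove first: writing $\Psi_i \coloneqq (X_j(\rho_i) - X_1(\tau_i))_{j \in [N]}$ for the configuration at $\rho_i$ recentered at the queen $X_1(\tau_i)$, the vector $\Psi_i$ has a fixed law $\nu$ (namely the law of $(X_j(\tilde\rho)-X_1(\tilde\tau))_{j\in[N]}$) and is independent of the evolution strictly before $\tau_i$. To see this, recall from Remark~\ref{r.queen} that on $A_{T_{I_i}}\cap B_{T_{I_i}}$ all $N$ particles present at $\rho_i$ are descendants of the queen $X_1(\tau_i)$. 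Hence $\Psi_i$ is a deterministic functional of the BBM subtree rooted at $X_1(\tau_i)$ over $[\tau_i,\rho_i]$, together with the BBB selection rule restricted to that subtree: by Lemma~\ref{l.no_kill_center} the first $N-1$ branchings only kill particles of $G\sqcup D$, so the particular (non-queen) particle killed at each such event, which is the only place the non-queen positions enter, does not affect the surviving queen descendants or their locations. By the branching property, this subtree viewed relative to its root is a standard BBM from the origin, independent of the past and of the subtrees of the other time-$\tau_i$ particles; and the part of $B_{T_{I_i}}$ that constrains it (branch at least $N-1$ times and stay within \rad) factorizes off from the translation-invariant event $A_{T_{I_i}}$ and from the rest of $B_{T_{I_i}}$. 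Conditioning on $A_{T_{I_i}}\cap B_{T_{I_i}}$ therefore leaves the recentered subtree — and hence $\Psi_i$ — with the fixed law $\nu$, independent of the past. Combined with the strong Markov property of $\rX$ at the $\cG$-stopping time $\rho_i$ and the translation invariance of the dynamics, this shows that the recentered post-regeneration processes $(X(\rho_i+s)-X_1(\tau_i))_{s\ge 0}$, $i\ge 1$, are i.i.d., each distributed as $\rX$ under $\bP_\nu$.

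With this in hand, conditions (1) and (2) are immediate. Since $\rho_{i+1}\ge \rho_i+2$, both $\tau_{i+1}-\tau_i=\rho_{i+1}-\rho_i$ and $X_1(\tau_{i+1})-X_1(\tau_i)$ (using $\tau_{i+1}=\rho_{i+1}-1>\rho_i$) are measurable functionals of the recentered post-$\rho_i$ process, hence i.i.d. Positivity of $\bE_x[\tau_2-\tau_1]$ holds because $\tau_2-\tau_1\ge 2$, and finiteness because the inter-regeneration time has exponential tails: by Remark~\ref{r.HarrisRec}(ii) the gaps $T_{i+1}-T_i$ have exponential tails uniformly in the configuration, by Lemma~\ref{lem:lem6} each attempt succeeds with probability at least $\epsilon$, and so the number of attempts before success is stochastically dominated by a geometric variable. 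Condition (3) needs slightly more care, since $S_i\coloneqq \sup_{\tau_i\le t\le \tau_{i+1}}|X_1(t)-X_1(\tau_i)|$ also probes $[\tau_i,\rho_i]$, which precedes the regeneration. I would split $S_i=\max(S_i',S_i'')$ with $S_i'\coloneqq \sup_{\tau_i\le t\le \rho_i}|X_1(t)-X_1(\tau_i)|$ and $S_i''\coloneqq \sup_{\rho_i\le t\le \tau_{i+1}}|X_1(t)-X_1(\tau_i)|$. On $B_{T_{I_i}}$ every queen descendant stays within \rad\ of $X_1(\tau_i)$, and by Lemma~\ref{l.no_kill_center} index $1$ is only ever reassigned to another queen descendant, so $S_i'\le \rad$ and $S_i'$ is a functional of the conditioned queen subtree, while $S_i''$ is a functional of the post-$\rho_i$ process. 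Since the joint law of (conditioned queen subtree, post-$\rho_i$ process) is the same for every $i$ — the former has fixed law and the latter is conditionally fixed given $\Psi_i\sim\nu$ — the law of $S_i$ does not depend on $i$.

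For condition (4) I would bound
\[
S_1 \le 2\rad + \sup_{\rho_1\le t\le \tau_2} E(X(t)) + \sup_{\rho_1\le t\le \tau_2}|\overline{X}(t)-\overline{X}(\rho_1)|,
\]
using $|X_1(t)-\overline{X}(t)|\le E(X(t))$ and $E(X(\rho_1))\le 2\rad$, and then show both suprema have finite second moments. This reduces to a tail estimate: the cycle length $\tau_2-\rho_1$ has exponential tails, and over a window of bounded length the extent and the barycenter displacement of the BBB have Gaussian-type tails (the BBB particles are $N$ among the descendants of an embedding BBM whose size over bounded time has exponential tails, and each ancestral trajectory is a Brownian path). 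A union bound over branching events, combined with the exponential control of the cycle length, yields finite moments of all orders for both suprema, in particular finite second moment. Condition (5) follows from the same extent estimate: since $|\overline{X}(s)-X_1(s)|\le E(X(s))$, it suffices that $m^{-1/2}\sup_{0\le s\le m}E(X(s))\to 0$ in probability, and this holds because over $[0,m]$ there are $O(m)$ regeneration cycles, the per-cycle maximal extent has exponential tails, so $\sup_{0\le s\le m}E(X(s))=O(\log m)$ with high probability.

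The main obstacle, I expect, is proving the regeneration claim rigorously — in particular arguing that the conditioning on $A_{T_{I_i}}\cap B_{T_{I_i}}$, which is entangled with the random choice of the trial index $I_i$, nevertheless leaves $\Psi_i$ with the fixed law $\nu$ and independent of the past. The delicate point is that $\{I_i=k\}$ itself depends on $B_{T_k}$ and hence on the very queen subtree that determines $\Psi_i$; the resolution is to condition on $\{I_i\ge k\}$ and on the history up to time $T_k$ (both of which are determined strictly before the subtree evolves on $[T_k+1,T_k+2]$), and to observe that conditioning on $\{I_i=k\}$ then amounts exactly to conditioning that subtree on the relevant part of $B_{T_k}$, producing the law $\nu$ irrespective of $k$ and of the earlier history. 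A secondary, more technical difficulty is the moment bound underlying conditions (4) and (5): controlling the maximal extent and barycenter displacement over a cycle whose length is only known through its exponential tails.
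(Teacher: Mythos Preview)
Your overall strategy---establish a regeneration structure, then read off conditions (1)--(5)---matches the paper's, and your treatment of (4) and (5) via exponential tails of the cycle length and per-cycle displacement control is in the same spirit (the paper executes (4) more explicitly via the first BBB construction, bounding $\cJ_{\tilde\tau}$ and Brownian maxima separately, and handles (5) via Fact~\ref{l.convProb}).

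There is, however, a real gap in your regeneration claim. You assert that $\Psi_i = (X_j(\rho_i) - X_1(\tau_i))_{j \in [N]}$ is a functional of the queen subtree alone, since ``the particular (non-queen) particle killed at each such event \dots\ does not affect the surviving queen descendants or their locations.'' This is correct for the \emph{unlabeled} set of positions, but not for the labeled vector $\Psi_i$: under the BBB indexing convention, each newborn queen descendant inherits the label of the non-queen particle killed at that instant, and \emph{which} non-queen particle dies (i.e., which one is currently furthest from the barycenter) depends on the non-queen positions, hence on the history before $\tau_i$. Consequently the labeled process $(X(\rho_i+s)-X_1(\tau_i))_{s\ge 0}$ need not have a fixed law, and since $\tau_{i+1}$ is defined through the label-dependent events $A_{T_j}$ (which send particles in $G'$ and in $D'$ to different balls), you cannot conclude from your argument that $\tau_{i+1}-\tau_i$ is i.i.d. The paper flags precisely this point in the remark following Corollary~\ref{c.properties1to3}: obtaining an i.i.d.\ structure for $\rX$ itself at the times $\rho_i$ would require the relabeling permutations to be i.i.d., which ``does not seem obvious.''

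The paper's remedy is to introduce a permutation $\sigma_m$ (with $\sigma_m(1)=1$) that reindexes the queen descendants in birth order, and to prove that the \emph{relabeled} processes $\rX^{(m)}=\big((X_{\sigma_m(j)}(\tau_m+s)-X_1(\tau_m))_{j},\,0\le s\le \tau_{m+1}-\tau_m\big)$ are i.i.d.\ (Proposition~\ref{p.IIDprocesses}). Because particle $1$ is fixed by every $\sigma_m$, the quantities appearing in conditions (1)--(3) are functionals of $\rX^{(m)}$, and the i.i.d.\ structure follows. Your ``main obstacle'' paragraph correctly isolates the factorization of the conditioning on $B_{T_k}$ through the queen subtree---that is indeed the core of the paper's argument---but the relabeling step is needed on top of it to make the i.i.d.\ claim go through.
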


To prove Lemma~\ref{lem:main}, we must verify properties (1)-(5) of Proposition \ref{prop:prop2}. Lemma~\ref{lem:main} follows immediately the results of the next two subsections. Specifically, Corollary \ref{c.properties1to3}, Proposition \ref{p:variance} and Proposition \ref{p.property5} establish properties (1)-(3), property (4), and property (5) from the hypotheses of Proposition \ref{prop:prop2}, respectively.
 
\begin{remark}\label{r:dependencies}
At first glance, it might seem as though a version of Lemma~\ref{lem:main} ought to hold for the times $(T_i;i \ge 1)$ in place of $(\tau_i;i \ge 1)$, but this is not the case. To see the sort of dependencies that can arise, note that if $T_{i+1} > T_i+2$ then the process necessarily passes through a state with large extent between time $T_i$ and $T_{i+1}$. In this case, with positive probability, at time $T_{i+1}$ the process has extent exactly $L$ (it could also be strictly less than $L$ if it reached ``extent at most $L$'' by a branching/killing event). 
On the other hand, if $T_{i+1}=T_i+2$ then with probability $1$ the extent at time $T_{i+1}$ strictly is less than $L$. This shows that the law of $X(T_{i+1})$ depends on the size of the increment $T_{i+1}-T_i$, and gives some indication of the subtlety of the dependencies at play. 
\end{remark}

\subsection{The regeneration structure}\label{s:renewal} 

The goal of this section is to show that, when considered on time intervals of the form $[\tau_m,\tau_{m+1}]$, the BBB process has an essentially IID structure. 
The key to revealing this structure is to only focus on descendants of particles of $X_1(\tau_m)$. Ignoring other particles creates some indexing issues, which have to be managed in order to avoid creating subtle dependencies. However, the gist of the story is that the events $A_{\tau_m-1}$ and $B_{\tau_m-1}$ together ensure that the descendants of particle $1$ take over the population on the time interval $[\tau_m,\tau_{m}+1]$; this is what yields the IID structure. \lou{Throughout the section, we work with the construction of the BBB as embedded within a BBM $\rW$, unless otherwise specified. }

Fix any $m \ge 1$, write $\sigma_m(1)=1$, and for $2 \le i \le N$ let $\sigma_m(i)$ be the index of the $(i-1)$'st BBB particle to be killed in the time interval $[\tau_m,\tau_m+1]$. Because the event $A_{\tau_{m}-1}\cap B_{\tau_m-1}$ occurs, all BBB particles aside from $X_1$ die during this time interval (Remark \ref{r.queen}), so $(\sigma_m(i))_{i \in [N]}$ is a permutation of $[N]$. 

Next, for $s \ge 0$, let $N_m(s)$ be the number of living time-$(\tau_m+s)$ descendants of $X_1(\tau_m)$, including $X_1(\tau_m+s)$ itself, in the BBB. The event \jl{$A_{\tau_{m}-1}\cap B_{\tau_m-1} = A_{T_{I_{m}}}\cap B_{T_{I_m}}$} ensures that $N_m(s) \ge 1$ for $s \ge 0$ and that $N_m(1)=N$, so the descendants of $X_1(\tau_m)$ have taken over the population by time $\tau_m+1=\rho_m$. It follows that that $N_m(s)=N$ for all $s \ge 1$. 

Now write $\hat{X}^{(m)}(s)=(\hat{X}^{(m)}_j(s))_{j\in [N_m(s)]}$ for the locations of time-$(\tau_m+s)$ descendants of $X_1(\tau_m)$ in the BBB, listed in order of their birth times; formally, $\hat{X}^{(m)}_j(s) = X_{\sigma_m(j)}(\tau_m+s)$. This means that for all $s \ge 0$, 
\[
\big(\hat{X}^{(m)}_j(s)-\hat{X}^{(m)}_1(0)\big)_{j\in [N_m(s)]} = 
\big(X_{\sigma_m(j)}(\tau_m+s)-X_1(\tau_m)\big)_{j \in [N_m(s)]}\, .
\]
Write $\rX^{(m)}=((\hat{X}^{(m)}_j(s)-\hat{X}^{(m)}_1(0))_{j\in [N_m(s)]},0 \le s \le \tau_{m+1}-\tau_m)$. The process $\rX^{(m)}$ describes the locations of BBB descendants of $X_1(\tau_m)$ relative to that of $X_1(\tau_m)$, between time $\tau_m$ and $\tau_{m+1}$. 


Finally, recall from Section~\ref{sub:formalism} that we allow for the BBB process to be started from a configuration with fewer than $N$ particles, and in this case we write $N(t)$ for the number of particles alive at time $t$. 
It is useful for the next proposition to introduce the notation $\tilde{\rX}:=(\tilde{X}(t),t \ge 0)$ with $\tilde{X}(t)=(\tilde{X}_i(t))_{i \in [N(t)]}$, for a BBB process started from a single particle at $0 \in \R^d$. We then write $F$ for the event that $\sup_{0 \le s \le 1} \max_{1 \le i \le N(t)} |\tilde{X}_i(s)| \le \rad$ and that $N(1)=N$.
\begin{prop}\label{p.IIDprocesses}
The processes $(\rX^{(m)};m \ge 1)$ are IID, and their common distribution is the conditional distribution of $(\tilde{X}(t),0 \le t \le \tau_1)$ given $F$. 
\end{prop}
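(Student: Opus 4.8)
The plan is to exploit the embedding of the BBB within the BBM $\rW$ together with the branching property, showing that at each regeneration time the sub-population descending from $X_1(\tau_m)$ decouples both from the past and from the other particles' subtrees, yielding a fresh independent copy of a single-particle BBB conditioned on $F$. Both assertions (identical law and independence) will follow once I establish that $\rX^{(m)}$ is a deterministic functional of a single BBM subtree and that conditioning on the regeneration restricts that subtree's law to the conditional law of $\tilde{\rX}$ given $F$.

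First I would record why $\rX^{(m)}$ depends only on the BBM subtree rooted at $X_1(\tau_m)$. On $A_{\tau_m-1}\cap B_{\tau_m-1}$, Remark~\ref{r.queen} (via Lemma~\ref{l.no_kill_center}) gives that all BBB particles other than the descendants of $X_1(\tau_m)$ are killed in $[\tau_m,\rho_m]$, while no descendant of $X_1(\tau_m)$ is killed on this interval; hence on $[\tau_m,\rho_m]$ the BBB-descendants of $X_1(\tau_m)$ coincide with its BBM-descendants (pure branching, no internal killing), and from $\rho_m$ onward the entire BBB lives inside, and stays inside, this subtree forever. Consequently $\rX^{(m)}$, and also the next regeneration time $\tau_{m+1}$, are deterministic functionals of this single subtree alone.

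Next I would identify the law. Event $A_{\tau_m-1}$ forbids $X_1$ from branching on $[\tau_m-1,\tau_m]$, so $X_1(\tau_m)$ is a genuine single BBM particle and, by the branching property, its subtree is an independent BBM started from $X_1(\tau_m)$. Restricted to this subtree, $B_{\tau_m-1}$ is precisely the requirement of at least $N-1$ branchings (hence size $N$ at relative time $1$) together with all descendants staying within $\rad$, i.e.\ exactly the event $F$; the remaining constraints in $B_{\tau_m-1}$ involve only the other $N-1$ subtrees. Since a single-particle BBB branches without killing until it reaches size $N$ and then runs as a standard BBB, translation invariance then gives that the shifted subtree run up to $\tau_{m+1}$, intersected with $F$, has exactly the law of $(\tilde X(t),0\le t\le\tau_1)$ intersected with $F$.

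Finally, to deduce the IID statement I would handle the fact that $\tau_m=T_{I_m}+1$ is not a stopping time (as $I_m$ depends on the future occurrences of the $B_{T_i}$) by decomposing over $\{I_m=i\}$ and invoking the strong Markov property at the genuine $\cG$-stopping time $T_i+1$. On $\{I_m=i\}$ the count of earlier successes, the event $A_{T_i}$, and any event $\cE$ depending on $\rX^{(1)},\dots,\rX^{(m-1)}$ are all $\cG_{T_i+1}$-measurable; conditioning on $\cG_{T_i+1}$ and using the subtree factorization above, $\bE[\Psi(\rX^{(m)})\mathbbm{1}_{B_{T_i}}\mid\cG_{T_i+1}]$ factors as $\bE[\Psi(\tilde X|_{[0,\tau_1]})\mid F]\,\bE[\mathbbm{1}_{B_{T_i}}\mid\cG_{T_i+1}]$, the common factor coming from the independent other subtrees cancelling between the weighted and unweighted $B$-probabilities. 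Summing over $i$ collapses the decomposition and yields $\bE[\Psi(\rX^{(m)})\mathbbm{1}_{\cE}]=\bE[\Psi(\tilde X|_{[0,\tau_1]})\mid F]\,\p{\cE}$; taking $\cE=\Omega$ identifies the common law, and the general $\cE$ gives independence of $\rX^{(m)}$ from the past, so the $\rX^{(m)}$ are IID by induction. I expect the main obstacle to be the clean factorization of $B_{\tau_m-1}$ over the independent subtrees and its identification with $F$: this is exactly where phrasing $B$ in terms of BBM descendants, even after their BBB deaths, is essential, since it keeps the event a product over subtrees and prevents the killing rule from re-coupling them.
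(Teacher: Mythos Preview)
Your proposal is correct and uses the same essential ingredients as the paper: the branching property of the ambient BBM, the factorization of $B_{\tau_m-1}$ as a product over the $N$ independent subtrees rooted at the $X_j(\tau_m)$, and the identification of the subtree-$1$ constraint with $F$. The main organizational difference is in how the non-stopping-time $\tau_m$ is handled. You decompose over $\{I_m=i\}$ and apply the strong Markov property at the genuine $\cG$-stopping time $T_i+1$, then sum over $i$; the paper instead observes directly that $\rho_m=T_{I_m}+2$ is itself a $\cG$-stopping time, first pins down the conditional law on $[\tau_m,\rho_m]$ via the branching property, and then applies the strong Markov property once at $\rho_m$ to extend to $[\rho_m,\tau_{m+1}]$. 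Your route is slightly more explicit (the ``proxy'' versions of $\cE$ and $\rX^{(m)}$ on each $\{I_m=i\}$ make the measurability bookkeeping transparent), while the paper's route is shorter once one accepts that $\rho_m$ is a stopping time. Both arrive at the same factorization $\bE[\Psi\,\mathbbm{1}_B\mid\cG] = \bE[\Psi\mid F]\,\bE[\mathbbm{1}_B\mid\cG]$, and your closing remark about why $B$ must be phrased in terms of BBM descendants is exactly the point the paper emphasizes.
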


\begin{proof}
{\color{cjl}
The main subtlety to the proof is that the events $A_{\tau_m-1} \cap B_{\tau_m-1}$ contain some information about the process on the time interval $[\tau_m,\rho_m]=[\tau_m,\tau_m+1]$, and the effect of this on the law must be understood. Recall that $(\cG_t,t \ge 0)$ is the natural filtration of the BBM $\rW=(W(t),t \ge 0)$. 

For any $m\geq 1$, by definition, $\tau_m\leq t$ if and only if $\rho_m\leq t+1$, which in turn occurs if and only if $T_{I_m}\leq t-1$ and $A_{T_{I_m}}\cap B_{T_{I_m}}$ occurs. 

The event $A_{\tau_m-1}$ is $\sigma(W(s),0 \le s \le \tau_m)$-measurable. 
The event $B_{\tau_m-1,1}\supset B_{\tau_m-1}$ enforces that $X_1(\tau_m)=W_{\mathcal{I}_1(\tau_m)}(\tau_m)$ and its descendants branch at least $N-1$ times in the time interval $[\tau_m,\tau_{m+1}]$.  Lemma~\ref{l.no_kill_center} and Remark~\ref{r.queen} together show that when $A_{\tau_m-1}$ and $B_{\tau_m-1}$ both occur, each such branching event increases the number of living descendants of $X_1(\tau_m)$ until those descendants occupy the full population. In particular, at time $\tau_{m}+1=T_{I_m}+2$, all BBB individuals are descendants of $X_1(\tau_m)$, and so $N_m(1)=N$. 

Moreover, the only constraint on the motion of the particles during the time interval $[\tau_m,\tau_{m}+1]$ is imposed by the event $B_{\tau_m-1,2} \supset B_{\tau_m-1}$; this event precisely requires that $|W_k(\tau_m+s)-W_{\cI_j(\tau_m)}(\tau_m)| \le \rad$ 
for all $0 \le s \le 1$, all $j \in [N]$ and all $k$ for which $W_k(\tau_m+s)$ is a descendant of $W_{\cI_j(\tau_m)}(\tau_m)=X_j(\tau_m)$. By the branching property of BBM, this conditioning affects the law of the trajectories of each BBB particle and its descendants independently. This application of the branching property is why we work with the  BBM rather than with the BBB directly.

The three preceding paragraphs imply in particular that conditionally given $(X(t),0 \le t \le \tau_m)$, the first unit of time of the restarted process, 
\[
(\hat{X}^{(m)}(s)-\hat{X}^{(m)}_1(0),0 \le s \le 1),
\] 
is distributed as the process $(\tilde{X}(t),0 \le t \le 1)$, conditioned on the event $F$ defined above the proposition statement. 

Next, $\rho_m=T_{I_m}+2$ is a $\cG_{t}$-stopping time, and  $A_{T_{I_m}}\cap B_{T_{I_m}} = A_{\rho_m-2}\cap B_{\rho_m-2}$ is $\cG_{\rho_m}$-measurable. Since 
\[
\hat{X}^{(m)}(1) = (X_{\sigma_m(j)}(\rho_m),j \in [N]), 
\]
it follows by the strong Markov property (together with the conclusion of the previous paragraph) that conditionally given $(X(t),0 \le t \le \tau_m)$, the process 
\[
(\hat{X}^{(m)}(s)-\hat{X}^{(m)}_1(0),s \ge 0)
\]
is distributed as $(\tilde{X}(t),t \ge 0)$ conditioned on the event $F$. In particular, since this law does not depend on $(X(t),0 \le t \le \tau_m)$, this implies that $(\hat{X}^{(m)}(s)-\hat{X}^{(m)}_1(0),s \ge 0)$ is independent of $(\rX^{(m')},1 \le m' < m)$. 


The last thing to check is that $\tau_{m+1}-\tau_m$ indeed has the correct law. Since $\tau_m=T_{I_m}+1$, necessarily $\tau_{m+1} \ge T_{I_m+1}\ge T_{I_m}+2 =\tau_m+1$. Our approach is  to first understand the law of $T_{I_m+1}$, so that we can think of the time interval $[\tau_{m}, \tau_{m+1}]$ as $[\tau_{m}, T_{I_{m}+1}]\cup [T_{I_{m}+1}, \tau_{m+1}]$. By definition, $T_{I_m+1} \ge T_{I_m}+2=\tau_m+1$; thus 
\[
T_{I_m+1}-\tau_m = \inf\{t \ge 1: E(\hat{X}^{(m)}(t)) \le L\}. 
\]
Since 
\[
T_1 = \inf\{t \ge 1: E(X(t)) \le L\}\, ,
\]
it follows that the law of $T_{I_m+1}-\tau_m$ is precisely the conditional law of $T_1$ in a BBB started from a single particle at $0 \in \R^d$ given that the event $F$ occurs.
(In fact, as noted after (\ref{eq:tau_def}), we have $T_{I_m+1}=T_{I_m}+2=\tau_m+1$, so $T_{I_m+1}-\tau_m=1$; but, indeed, when $F$ occurs, we also have $T_1=1$. However, we found it clearer to explain the equality in distribution without using this observation.) 

Finally, using that $T_{I_m+1}=T_{I_m}+2=\rho_m$ is a $\cG_t$-stopping time, the fact that $\tau_{m+1}-\tau_m$ and 
$(\hat{X}^{(m)}(s)-\hat{X}^{(m)}_1(0),s \ge 0)$ have the correct joint distribution now follows by the strong Markov property applied at time $T_{I_m+1}$ together with the definition of $\tau_{m+1}$. }
\end{proof} 

\begin{cor}\label{c.properties1to3}
For any $x \in \R^{d\times N}$, under $\bP_x$, the BBB process and the random times $(\tau_i;i \ge 1)$ satisfy conditions (1), (2) and (3) of Proposition~\ref{prop:prop2}. 
\end{cor}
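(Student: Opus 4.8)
The plan is to read conditions (1)--(3) directly off the IID structure of the processes $\rX^{(m)}$ supplied by Proposition~\ref{p.IIDprocesses}, supplementing this only with a tail estimate to obtain the finite, nonzero mean required in condition~(1). The starting observation is that each of the three random sequences appearing in conditions (1), (2) and (3) is the image of $\rX^{(m)}$ under a single fixed measurable functional. Indeed, $\rX^{(m)}$ is indexed by $s \in [0,\tau_{m+1}-\tau_m]$, so the increment $\tau_{m+1}-\tau_m$ is simply the length of its time domain. Since $\sigma_m(1)=1$, the first coordinate of $\rX^{(m)}$ equals $\hat X^{(m)}_1(s)-\hat X^{(m)}_1(0) = X_1(\tau_m+s)-X_1(\tau_m)$; evaluating this at the terminal time $s=\tau_{m+1}-\tau_m$ recovers $X_1(\tau_{m+1})-X_1(\tau_m)$, and taking the supremum of its norm over the time domain recovers $\sup_{\tau_m \le t \le \tau_{m+1}}|X_1(t)-X_1(\tau_m)|$.

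Because Proposition~\ref{p.IIDprocesses} asserts that $(\rX^{(m)};m\ge 1)$ are IID, applying any one fixed measurable map to each term produces a sequence that is again IID, hence both independent and identically distributed. Applying this to the three functionals just described immediately yields the independence and identical-distribution claims of conditions (1) and (2), and the identical-distribution claim of condition (3). What remains is the assertion in condition (1) that the common mean of $\tau_{m+1}-\tau_m$ is finite and nonzero.

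That the mean is nonzero is immediate: by construction $\tau_{m+1}\ge T_{I_m+1}=\tau_m+1$, so $\tau_{m+1}-\tau_m\ge 1$ almost surely and $\bE_x[\tau_2-\tau_1]\ge 1$. For finiteness I would show that $\tau_{m+1}-\tau_m$ has exponential tails. Writing $\tau_{m+1}-\tau_m = T_{I_{m+1}}-T_{I_m}=\sum_{i=I_m}^{I_{m+1}-1}(T_{i+1}-T_i)$, each increment $T_{i+1}-T_i$ equals $2$ plus the additional time needed, after $T_i+2$, to reach a configuration of extent at most $L$; by the strong Markov property and Remark~\ref{r.HarrisRec}(ii) this additional time has exponential tails uniformly over the configuration at time $T_i+2$. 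Moreover, by Lemma~\ref{lem:lem6} and the strong Markov property, at each index $i$ the event $A_{T_i}\cap B_{T_i}$ occurs with conditional probability at least $\epsilon$ given the past, so the number of increments $I_{m+1}-I_m$ is stochastically dominated by a geometric random variable. A geometric number of increments each having exponential tails again has exponential tails, so in particular $\bE_x[\tau_2-\tau_1]<\infty$.

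The IID content of the corollary is thus essentially formal given Proposition~\ref{p.IIDprocesses}; the step requiring the most care is the finiteness of the mean, where one must combine the uniform-in-configuration exponential tail bound of Remark~\ref{r.HarrisRec}(ii) with the uniform lower bound $\epsilon$ of Lemma~\ref{lem:lem6}, applying the strong Markov property at the stopping times $T_i$ so that the geometric-sum tail estimate is properly justified.
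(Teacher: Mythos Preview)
Your argument for the IID structure is exactly the paper's: each of $\tau_{m+1}-\tau_m$, $X_1(\tau_{m+1})-X_1(\tau_m)$, and $\sup_{\tau_m\le t\le\tau_{m+1}}|X_1(t)-X_1(\tau_m)|$ is a fixed measurable functional of $\rX^{(m)}$, so Proposition~\ref{p.IIDprocesses} immediately gives conditions (1)--(3). The only difference is that the paper's proof of this corollary establishes \emph{only} the nonzero mean (via $\tau_2-\tau_1\ge 2$; your bound $\ge 1$ is weaker but still sufficient), and silently defers finiteness of the mean to Lemma~\ref{lem:lem11}, which appears in the following subsection. Your sketch of the finite-mean argument---exponential tails for $T_{i+1}-T_i$ from Remark~\ref{r.HarrisRec}(ii), a geometric bound on $I_{m+1}-I_m$ from Lemma~\ref{lem:lem6}, and the strong Markov property---is precisely the content of Lemma~\ref{lem:lem11}, so you have correctly anticipated that argument; just be aware that the increments $T_{i+1}-T_i$ are neither IID nor independent of $I_{m+1}-I_m$, so the ``geometric sum of exponential-tailed variables'' step needs the uniform conditional bounds and the strong Markov property applied carefully (as the paper does in the proof of Lemma~\ref{lem:lem11}) rather than a direct appeal to a generic tail lemma.
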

\begin{proof}
The differences $(\tau_{m+1}-\tau_m,m \ge 1)$ are the total lifetimes of the processes $(\rX^{(m)};m \ge 1)$; since these processes are IID, so are the differences $(\tau_{m+1}-\tau_m,m \ge 1)$. \jl{Moreover, since 
\begin{equation*}
\tau_{2}-\tau_{1}=(T_{I_{2}}+1)-(T_{I_{1}}+1)\geq (T_{I_{1}}+3)-(T_{I_{1}}+1)\geq 2,
\end{equation*}
 we see that $\bE_{x}[\tau_{2}-\tau_{1}]$ is nonzero.} This verifies property (1).

Next, writing 
$\jl{\rX_1^{(m)}}=(X_1^{(m)}(s),0 \le s \le \tau_{m+1}-\tau_m)$ for the projection of $\rX^{(m)}$ on its first coordinate, then the processes $(\rX_1^{(m)},m \ge 1)$ are also IID. By definition, 
\[
\rX_1^{(m)}(s) = X_{\sigma_m(1)}(\tau_m+s)-X_1(\tau_m) = X_1(\tau_m+s)-X_1(\tau_m)\, ,
\]
the second inequality since $\sigma_m(1)=1$ for all $m \ge 1$. It follows that the processes 
\[
(X_1(\tau_m+s)-X_1(\tau_m),0 \le s \le \tau_{m+1}-\tau_m)
\]
are IID for $m \ge 1$; properties (2) and (3) of the proposition are immediate. 
\end{proof}
\jl{\begin{remark}
One also might try to recover an IID structure, and thereby simplify the proof, by using the stopping times $(\rho_i; i \ge 1)$ rather than $(\tau_i;i \ge 1)$. However, at the times $\rho_i$, there are other dependencies. To define $\rX^{(i)}$ in Section~\ref{s:renewal}, when particle $X_1$ starts to reproduce at time $\tau_i$, we label the offspring in increasing order of birth time until there are $N$ of them. This is not how they would be labeled in the underlying BBB process, which is why we need the random permutation $\sigma^{(i)}$ to describe the relabeling. To get an IID structure for $\rX$ itself at the regeneration times $\rho_i$, we would need to know that the permutations $(\sigma^{(i)};i \ge 1)$ are IID. We have not proved such independence and, indeed, it does not seem obvious to us that it should hold. 
\end{remark}}

\subsection{Finite variance of the increments}\label{s:variance} In this section, we prove property $(4)$ in Proposition \ref{prop:prop2}. We first establish that the regeneration increments have exponential tails. 

\begin{lem}\label{lem:lem11} There exists $b=b_{L,N} >1$ such that 
\[
\sup_{x \in \R^{d \times N}} \bE_x[b^{\tilde{\rho}}]<\infty
\quad\text{and}
\quad
\sup_{x \in \R^{d \times N}} \bE_x[b^{\rho_{1}}]<\infty.
\]
\end{lem}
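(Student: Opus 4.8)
The plan is to exploit the geometric (renewal) structure built into $\rho_1$ and $\tilde\rho$: each ``round'' consists of waiting until the extent drops to at most $L$ and then attempting the event $A\cap B$ over the next two units of time, which succeeds with probability at least $\epsilon=\epsilon_{L,N}$ by Lemma~\ref{lem:lem6}. Keep $L$ fixed as in the construction (so $\sup_x\bP_x(T^{(L)}>L)<\tfrac12$); by the Markov property this gives uniform exponential tails for both $T^{(L)}$ and $\tilde T^{(L)}$ (via \eqref{tildeTL} and Remark~\ref{r.HarrisRec}(ii)), so for $b>1$ close enough to $1$ the quantities $c(b):=\sup_{y}\bE_y[b^{\tilde T^{(L)}}]$ and $c'(b):=\sup_{x}\bE_x[b^{T^{(L)}}]$ are finite, and moreover $c(b),c'(b)\to 1$ as $b\downarrow 1$. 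The whole problem reduces to controlling $w:=w(b):=\sup_{y:\,E(y)\le L}\bE_y[b^{\tilde\rho}]$. Indeed, by the strong Markov property applied at the stopping time $\tilde T_1=\tilde T^{(L)}$ (resp.\ $T_1=T^{(L)}$), and because the restarted process from a configuration of extent $\le L$ has its remaining regeneration time distributed exactly as a fresh $\tilde\rho$ (the round indices and success events match up), one gets $\bE_x[b^{\tilde\rho}]\le c(b)\,w$ and $\bE_x[b^{\rho_1}]\le c'(b)\,w$, uniformly in $x$. So it suffices to show $w<\infty$.

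To bound $w$, fix $z$ with $E(z)\le L$, so that $\tilde T_1=0$ and the first attempt $A\cap B$ takes place over $[0,2]$; write $S$ for this success event, so $\bP_z(S)\ge\epsilon$ by Lemma~\ref{lem:lem6}. On $S$ we have $\tilde\rho=2$, contributing $b^2\bP_z(S)\le b^2$. On $S^c$, decompose $\tilde\rho=2+\Delta+\tilde\rho'$, where $\Delta:=\tilde T_2-2$ is the further time to return to extent $\le L$ starting from $X(2)$, and $\tilde\rho'$ is the regeneration time of the process restarted at $\tilde T_2$ (again distributed as a fresh $\tilde\rho$ from the extent-$\le L$ configuration $X(\tilde T_2)$). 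The key estimate is a nested conditioning. First condition on $\cG_{\tilde T_2}$: the factor $\mathbbm{1}_{S^c}b^{\Delta}$ is $\cG_{\tilde T_2}$-measurable, while $\bE[b^{\tilde\rho'}\mid\cG_{\tilde T_2}]=\bE_{X(\tilde T_2)}[b^{\tilde\rho}]\le w$ by the strong Markov property. Then condition on $\cG_2$: the factor $\mathbbm{1}_{S^c}$ is $\cG_2$-measurable, while $\bE[b^{\Delta}\mid\cG_2]=\bE_{X(2)}[b^{\tilde T^{(L)}}]\le c(b)$. This yields
\[
\bE_z\!\left[b^{\tilde\rho}\mathbbm{1}_{S^c}\right]\le b^2\,w\,c(b)\,\bE_z[\mathbbm{1}_{S^c}]\le b^2\,w\,c(b)\,(1-\epsilon),
\]
and hence, taking the supremum over $z$ with $E(z)\le L$,
\[
w\le b^2+b^2\,c(b)\,(1-\epsilon)\,w.
\]

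To close the recursion, choose $b>1$ small enough that $c(b)<\infty$ and $b^2 c(b)(1-\epsilon)<1$; this is possible because $b^2c(b)(1-\epsilon)\to 1-\epsilon<1$ as $b\downarrow1$. Rearranging gives $w\le b^2/\bigl(1-b^2c(b)(1-\epsilon)\bigr)<\infty$, and then $\sup_x\bE_x[b^{\tilde\rho}]\le c(b)w<\infty$ and $\sup_x\bE_x[b^{\rho_1}]\le c'(b)w<\infty$, as required. I expect the main obstacle to be the dependency between the failure event $S^c$ on $[0,2]$ and the subsequent return time $\Delta$ (and between these and $\tilde\rho'$): one cannot simply multiply tail bounds, since the configuration $X(2)$ reached upon failure is correlated with $S^c$. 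The nested application of the strong Markov property at $\tilde T_2$ and then at time $2$ is precisely what decouples these factors cleanly, and the uniform-in-$x$ exponential tails of $\tilde T^{(L)}$ and $T^{(L)}$ are what make the bounds $c(b)$, $c'(b)$ independent of the (arbitrary) starting configuration.
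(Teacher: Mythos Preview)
Your argument takes a genuinely different route from the paper's. The paper establishes a uniform one-step conditional tail bound $\sup_x\bP_x(\tilde\rho>t+L+2\mid\tilde\rho>t)<1-c'$ (combining Lemma~\ref{lem:lem6} with the choice of $L$), iterates it to obtain geometric decay of $\sup_x\bP_x(\tilde\rho>t)$, and then integrates against $b^t$ for suitably small $b>1$. You instead work directly with the moment generating function, deriving a renewal inequality for $w=\sup_{E(z)\le L}\bE_z[b^{\tilde\rho}]$ by splitting on success or failure of the first attempt and applying the strong Markov property twice (at $\tilde T_2$ and then at time $2$). Your route makes the regenerative structure more transparent and is arguably cleaner; the paper's is more elementary and sidesteps the circularity issue below.

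There is, however, one real gap: the inequality $w\le b^2+b^2c(b)(1-\epsilon)w$ cannot be rearranged to conclude $w<\infty$ unless you already know $w<\infty$; if $w=\infty$ the inequality is vacuous and the ``rearranging'' step is illegitimate. The standard fix is to truncate by the number of attempts. Set $w_n:=\sup_{E(z)\le L}\bE_z\bigl[b^{\tilde\rho}\mathbbm{1}_{\{\tilde I_1\le n\}}\bigr]$; then $w_1\le b^2$ is finite, and on $S^c\cap\{\tilde I_1\le n+1\}$ the process restarted at $\tilde T_2$ has its own $\tilde I_1$ at most $n$, so your nested conditioning gives $w_{n+1}\le b^2+b^2c(b)(1-\epsilon)\,w_n$. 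Iterating yields $w_n\le b^2/\bigl(1-b^2c(b)(1-\epsilon)\bigr)$ uniformly in $n$, and monotone convergence then gives $w<\infty$. One minor additional point: the claim $c(b)\to 1$ as $b\downarrow 1$ requires uniformity in the starting configuration; this does follow from the uniform geometric tail $\sup_y\bP_y(\tilde T^{(L)}>kL)\le 2^{-k}$ via $\bE_y[b^{\tilde T^{(L)}}]-1=(\ln b)\int_0^\infty b^s\,\bP_y(\tilde T^{(L)}>s)\,ds$, but it deserves a sentence.
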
 
\begin{proof} 
We first prove the first bound, then explain the small modifications to the proof that are required to prove the second bound. Fix any initial configuration $x\in \R^{d\times N}$. For any $b > 1$, using the change of variables $r\coloneqq \log_b(s)$, we may write
\begin{align*}
\bE_x[b^{\tilde \rho}]&=\int_0^\infty \bP_x(\tilde \rho>\log_b(s))ds\\&=\ln(b)\left(\int_{-\infty}^1 b^r\bP_x(\tilde \rho>r)dr+\int_1^\infty b^r\bP_x(\tilde \rho>r)dr\right).
\end{align*}
The first integral is bounded from above by $\ln(b)\int_{-\infty}^1 b^r dr$ which is finite for any $b>1$. By \eqref{tildeTL}, there exists $L>1$ such that 
\begin{equation}\label{e.taulbnd}
\inf_{x\in \R^{d\times N}} \bP_x(\tilde T_1\leq L)>1/2.
\end{equation}
Moreover, we claim Lemma~\ref{lem:lem6} implies the existence of a constant $c=c_{L,N}>0$ such that for all $t\geq L+2$,
\begin{equation}\label{e.rholbnd}
\inf_{x\in \R^{d\times N}}\bP_x(\tilde \rho\leq t|\tilde T_1\leq L)\ge c.
\end{equation}
\lou{To see this, note that if $A_{\tilde T_1} \cap B_{\tilde T_1}$ occurs then $\tilde \rho=\tilde T_1+2$, so if additionally $\tilde T_1 \le L$ then $\tilde \rho \le L+2 \le t$. 
 It follows that for all $t\geq L+2$, for any $x \in \R^{d \times N}$, 
\begin{align*}
\bP_x(\tilde \rho\leq t|\tilde T_1\leq L)
& \ge \inf_{y\in C_L}\bP_y(A_{0}\cap B_{0})
\end{align*}
where in the last step we have used the Markov property at time $T_1$, together with the fact that $X(\tilde T_1) \in C_L$. }

Denote $c'\coloneqq c/2\in (0,1)$. By (\ref{e.taulbnd}) and (\ref{e.rholbnd}), for any $t\geq L+2$ and any $x \in \R^{d \times N}$, 
\begin{align*}
\bP_x(\tilde\rho\leq t)&\geq \bP_x(\tilde\rho\leq t, \tilde T_1\leq L)\\&\geq \bP_x(\tilde\rho\leq t|\tilde T_1\leq L)\bP_x(\tilde T_1\leq L)\\&> c',
\end{align*} 
\jl{  For all $t\geq 0$, it follows from the Markov property at time $t$ that
\begin{align}
\bP_x(\tilde\rho> t+L+2|\tilde \rho>t) &\notag= \bP_x(\tilde \rho> t)^{-1}\bE_x\left[ \bP_x(\tilde \rho>t+L+2|\cG_t)\mathbbm{1}_{\{\tilde \rho>t\}}\right]\\&\notag=\bE_x\left[\bP_{X(t)}(\tilde \rho>L+2)|\tilde \rho>t\right]\\&\leq \sup_{y\in \R^{d\times N}}\bP_y(\tilde \rho>L+2)<1-c',\label{diffForRho1}
\end{align}
for all $x\in \R^{d\times N}$. We obtain by induction that
\[
\bP_x(\tilde \rho>t)\leq \prod_{k=1}^{\lfloor t/(L+2)\rfloor}\bP_x(\tilde \rho>k(L+2)|\tilde \rho>(k-1)(L+2))\leq (1-c')^{\lfloor t/(L+2)\rfloor}
\]
for all $x\in \R^{d\times N}$.
Thus,
\[
\int_1^\infty b^r\bP_x(\tilde \rho>r)dr \leq \int_1^\infty b^r (1-c')^{\lfloor r/(L+2) \rfloor} dr.
\]}
The integral is finite provided we choose $b>1$ such that $b(1-c')^{1/(L+2)}<1$. Since $c'\in (0,1)$, this is possible. 

\jl{ 
To establish the second bound of the lemma, essentially the same argument applies, using $T_1$ instead of $\tilde T_1$ and Lemma \ref{lem:lem10} instead of \eqref{tildeTL}. The only calculation which changes is the analogue of the bound in \eqref{diffForRho1}. The equivalent estimate for $\rho_1$ follows by applying the Markov property at time $t \ge 1$ to obtain that 
\begin{align*}
\bP_x(\rho_1>t+L+2|\rho_1>t)&\notag=\bE_x[\bP_x(\rho_1>t+L+2|\cG_t)|\rho_1>t]\\&\notag=\bE_x[\bP_{X(t)}(\tilde \rho>L+2)|\rho_1>t]\\&\leq \sup_{y\in \R^{d\times N}}\bP_y(\tilde \rho>L+2)<1-c',
\end{align*}
for all $x\in \R^{d\times N}$.}
\end{proof}
Note that, taking $\rho_0\equiv0$, the lemma implies that 
\[
\sup_{x\in \R^{d\times N}}\bE_x[b^{\rho_{i+1}-\rho_i}]<\infty
\] for any $i\geq 0$, since we may write by the tower law and the strong Markov property \jl{ 
\[
\bE_x[b^{\rho_{i+1}-\rho_i}]=\bE_x[\bE_x[b^{\rho_{i+1}-\rho_i}|\cG_{\rho_i}]]=\bE_x[\bE_{X(\rho_i)}[b^{\tilde \rho}]]
\] for all $i\geq 1$,} and Lemma~\ref{lem:lem11} implies that the inner conditional expectation is deterministically bounded by some $C<\infty$. Also observe that $\tau_{i+1}-\tau_i=\rho_{i+1}-\rho_i$ for each $i\geq 1$ and $\tau_1\leq \rho_1$, so Lemma~\ref{lem:lem11} yields
\begin{equation}\label{e.expTails_tau}
\sup_{x\in \R^{d\times N}}\bE_x[b^{\tau_{i+1}-\tau_i}]<\infty.
\end{equation} for all $i\geq 0$, where we have defined $\tau_0\equiv 0$. This last fact will be used in the proof of property (4) of Proposition \ref{prop:prop2} for the BBB process (Proposition \ref{p:variance}).

\begin{prop}\label{p:variance}
For any $x \in \R^{d\times N}$, under $\bP_x$, the BBB process and the stopping times $(\tau_i;i \ge 1)$ satisfy condition (4) of Proposition~\ref{prop:prop2}. 
\end{prop}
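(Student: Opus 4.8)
The goal is to bound $\bE_x[\sup_{\tau_1 \le t \le \tau_2}|X_1(t)-X_1(\tau_1)|^2]$. Write $T := \tau_2-\tau_1$ and $Y := \sup_{\tau_1 \le t \le \tau_2}|X_1(t)-X_1(\tau_1)|$. The plan is to dominate $Y$ by the maximal displacement of a branching Brownian motion over a time window of random length $T$, and then to exploit the exponential tails of $T$ recorded in \eqref{e.expTails_tau} together with a first-moment estimate for the BBM.

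First I would observe that throughout $[\tau_1,\tau_2]$ the BBB particle $X_1(t)$ is a BBM-descendant of the queen $X_1(\tau_1)$. Indeed, by Remark~\ref{r.queen} and the construction $\rho_1=\tau_1+1$, at time $\rho_1$ all $N$ BBB particles are descendants of $X_1(\tau_1)$, and every BBB particle alive at a time $t\ge\rho_1$ is in turn a BBM-descendant of one of these $N$ particles. Since the event $B_{\tau_1-1}$ forces every descendant of a time-$\tau_1$ BBB particle to remain within $\rad$ of its time-$\tau_1$ ancestor on $[\tau_1,\rho_1]$, and since the extent $E(X(\rho_1))$ is at most $4\rad$, I can bound
\[
Y \le C_0 + D, \qquad C_0 := 5\rad,
\]
where $D$ denotes the maximal displacement over $[\rho_1,\tau_2]$ of any BBM-descendant of the $N$ BBB particles present at $\rho_1$, measured relative to the position of its time-$\rho_1$ ancestor. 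By the strong Markov property at the $\cG_t$-stopping time $\rho_1$ and the branching property, after $\rho_1$ these descendants evolve as a BBM started from $N$ particles.

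Next I would estimate the tail of $D$. Fix $t_0>0$. On $\{T-1\le t_0\}$ we have $\tau_2\le\rho_1+t_0$, so $D$ is at most $\bar D(t_0)$, the maximal ancestral displacement of the same BBM over the fixed window $[\rho_1,\rho_1+t_0]$. Since no particle dies in the BBM, every position attained on this window lies on the ancestral trajectory of some particle alive at time $\rho_1+t_0$; a first-moment (many-to-one) computation, using that the mean population of a rate-$1$ dyadic BBM started from $N$ particles is $Ne^{t_0}$ and that each ancestral trajectory is a standard $d$-dimensional Brownian motion $\beta$, gives
\[
\bP_x(\bar D(t_0) > r) \le N e^{t_0}\, \bP\Big(\sup_{0\le s\le t_0}|\beta(s)| > r\Big) \le C_d\, N\, e^{t_0}\, e^{-r^2/(2dt_0)}.
\]
Combining this with the decomposition $\bP_x(D>r)\le\bP_x(T-1>t_0)+\bP_x(\bar D(t_0)>r)$ and the exponential tail bound $\bP_x(T-1>t_0)\le b^{-t_0-1}\sup_x\bE_x[b^{T}]\le C\,b^{-t_0}$ from \eqref{e.expTails_tau}, and finally choosing $t_0=\sqrt r$, I obtain a bound of the form $\bP_x(Y>C_0+r)\le C'\big(b^{-\sqrt r}+e^{-c r^{3/2}}\big)$ for $r\ge1$, uniformly in $x$. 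This decays faster than any polynomial, so $\bE_x[Y^2]=\int_0^\infty 2r\,\bP_x(Y>r)\,dr<\infty$, which is property (4) of Proposition~\ref{prop:prop2}.

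The main obstacle is the rigorous justification of the domination $Y\le C_0+D$ and the reduction to a free BBM over a fixed window: one must be careful that the events $A_{\tau_1-1},B_{\tau_1-1}$ defining the regeneration carry information about the branching structure on $[\tau_1,\rho_1]$, so the clean BBM first-moment bound can only be applied after restarting at $\rho_1$ via the strong Markov property; and one must verify that $X_1(t)$ indeed descends from the queen, so that its displacement is genuinely controlled by ancestral trajectories of the post-$\rho_1$ BBM. The remaining steps—the first-moment estimate and the optimization over $t_0$—are routine.
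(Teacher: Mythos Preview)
Your argument is correct and follows a genuinely different route from the paper's. Both proofs begin with the same split of $[\tau_1,\tau_2]$ at $\rho_1=\tau_1+1$, bounding the contribution on $[\tau_1,\rho_1]$ by a constant via the event $B_{\tau_1-1,2}$. From there the paper switches to the \emph{first} construction of the BBB (Section~\ref{sub:formalism}), invoking the pathwise bound \eqref{e.BBBbd} to control the displacement by $2\cJ_{\tilde\tau}\sup_{s\le\tilde\tau}\max_j|B_j(s)|$, and then obtains a second moment via Cauchy--Schwarz, a dyadic decomposition of $\tilde\tau\in[2^p,2^{p+1}]$, and separate moment estimates for the Poisson count and the Brownian maxima. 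You instead stay with the BBM embedding: you dominate the post-$\rho_1$ displacement of $X_1$ by the maximal ancestral displacement $D$ of a free BBM restarted from $N$ particles at $\rho_1$, then control $\bP(D>r)$ by a single-threshold decomposition $\{T-1>t_0\}\cup\{\bar D(t_0)>r\}$, using exponential tails of $T$ for the first piece and a many-to-one bound for the second, with $t_0=\sqrt r$. Your route is more direct, yields stretched-exponential tails for $Y$ (hence all moments, not just the second), and avoids the Cauchy--Schwarz/dyadic machinery; the paper's route avoids having to track BBM ancestry after $\rho_1$ and produces the estimate \eqref{eq:jb_bound}, which is reused verbatim in the proof of Proposition~\ref{p.property5}. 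The delicate points you flag---that $X_1(t)$ descends in the BBM from one of the $N$ BBB particles present at $\rho_1$, and that the strong Markov property at $\rho_1$ removes any conditioning from $A_{\tau_1-1}\cap B_{\tau_1-1}$ so the sub-BBM is free---are exactly what make the domination $Y\le C_0+D$ and the first-moment bound legitimate; the rest is routine.
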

\begin{proof}
The condition asserts that 
   \[
    \bE_x\left[\sup_{\tau_1\leq t\leq \tau_2}|X_1(t)-X_1(\tau_1)|^2\right]<\infty.
   \]
We bound the supremum as follows. Recall that $\rho_1=\tau_1+1$, and write 
\begin{align*}
\sup_{\tau_1\leq t\leq \tau_2}|X_1(t)-X_1(\tau_1)|^2
& =
\max\Big(
\sup_{\tau_1\leq t\leq \rho_1}|X_1(t)-X_1(\tau_1)|^2,
\sup_{\rho_1\leq t\leq \tau_2}|X_1(t)-X_1(\tau_1)|^2
\Big)\\
& \le 
\sup_{\tau_1\leq t\leq \rho_1}|X_1(t)-X_1(\tau_1)|^2+ \sup_{\rho_1\leq t\leq \tau_2}|X_1(t)-X_1(\tau_1)|^2. \\
& \le 
\rad + \sup_{\rho_1\leq t\leq \tau_2}|X_1(t)-X_1(\tau_1)|^2\, ,
\end{align*}
the last inequality since by definition, $|X_1(t)-X_1(\tau_1)| \le \rad$ for $t \le \tau_1+1=\rho_1$. 
So to prove the lemma it suffices to show that 
\begin{equation}\label{e.2mombd}
\bE_x\left[\sup_{\rho_1\leq t\leq \tau_2}|X_1(t)-X_1(\tau_1)|^2\right] < \infty\, .
\end{equation}

Now write $\pi$ for the law of \jl{$X(\rho_{1})-X_1(\tau_1)$}. Precisely, if $\rX$ is the BBB process started from a single particle at $0 \in \R^d$, then $\pi$ is the conditional law of $X(1)$ given that $N(1)=N$ and that 
\[
\sup_{0 \le t \le 1} \max_{1 \le i \le N(t)} \left|X_i(t)\right| \le \rad\, .
\] 
Recall that $\tau_1-1 = T_{I_1}$ is the time at which $A_{T_{I_1}} \cap B_{T_{I_1}}$ occurs. By definition, $A_{T_{I_1}} \cap B_{T_{I_1}}$ is $\cF_{\rho_1}$-measurable, since $\rho_1=\tau_1+1=T_{I_1}+2$. \jl{ 
With this definition, applying the strong Markov property at time $\rho_{1}$ yields 
\[
 \bE_x\left[\sup_{\rho_1\leq t\leq \tau_2}|X_1(t)-X_1(\tau_1)|^2\right] = 
\bE_\pi\left[\sup_{0 \le t \le \tilde{\tau}} |X_1(t)|^2\right],   
\]
for $\tilde{\tau}$ defined according to \eqref{e.tautildedef}. In particular, we have 
\begin{align*}
 \bE_x\left[\sup_{\rho_1\leq t\leq \tau_2}|X_1(t)-X_1(\tau_1)|^2\right] &=\bE_\pi\left[\sup_{0 \le t \le \tilde{\tau}} |X_1(t)-X_{1}(0)+X_{1}(0)|^2\right]\\
&\leq 2\bE_\pi\left[\sup_{0 \le t \le \tilde{\tau}} |X_1(t)-X_{1}(0)|^{2}\right]+2\bE_\pi\left[|X_{1}(0)|^{2}\right]\\
&\leq 2\bE_\pi\left[\sup_{0 \le t \le \tilde{\tau}} |X_1(t)-X_{1}(0)|^{2}\right]+2\rad^{2} 
\end{align*}
where we have used the inequality $(a+b)^2 \le 2a^{2}+2b^{2}$ and the fact that since $|X_1(\rho_1)-X_1(\tau_1)| \le\rad$, whence $X_{1}(0)$ started from an initial condition distributed as $\pi$ must also satisfy $|X_{1}(0)|\leq \rad$.}

In view of the preceding bound, in order to verify \eqref{e.2mombd}, it suffices to show that
\begin{equation*}
\bE_\pi\left[\sup_{0 \le t \le \tilde{\tau}} |X_1(t)-X_{1}(0)|^{2}\right]<\infty. 
\end{equation*}
To do so, we make use of the first construction of the BBB process given in Section \ref{sub:formalism}. In other words, we view $\rX$ as constructed from $N$ independent Brownian motions $(B_i)_{i \in [N]}$ with the aid of a Poisson counting process $\cJ$ which gives the branching times, and of a sequence $(\cU_i;i\geq 1)$ of random variables which determines which particle branches at each branching time.
Since the ancestral trajectories are continuous paths, an immediate consequence of this construction is that 
{\color{cjl}
\begin{equation}\label{e.BBBbd}
\sup_{0\leq t\leq \jl{\tilde{\tau}}} \max_{j\in[N]}|X_j(t)-X_j(0)|\leq 2\cJ_{\jl{\tilde{\tau}}}\sup_{0\leq s\leq \jl{\tilde{\tau}}}\max_{j\in [N]}|B_j(s)|+\max_{i, j\in [N]}|X_i(0)-X_j(0)|\, ,
\end{equation}}
so it suffices to prove that 
\begin{equation}\label{eq:jb_bound}
\bE_\pi\left[\cJ_{\jl{\tilde{\tau}}}^2\sup_{0\leq s\leq \jl{\tilde{\tau}}}\max_{j\in [N]}|B_j(s)|^2\right]<\infty; 
\end{equation}
establishing \eqref{eq:jb_bound} occupies the remainder of the proof. 

By the Cauchy-Schwarz inequality, we have 
\begin{equation}\label{e.toBdVar}
\bE_\pi\left[\cJ_{\jl{\tilde{\tau}}}^2\sup_{0\leq s\leq \jl{\tilde{\tau}}}\max_{j\in [N]}|B_j(s)|^2\right]\leq \bE_\pi[\cJ_{\jl{\tilde{\tau}}}^4]^{1/2}\bE_\pi\left[\sup_{0\leq s\leq \jl{\tilde{\tau}}}\max_{j\in [N]}|B_j(s)|^4\right]^{1/2}
\end{equation}
Moreover, since $t\mapsto \cJ_t$ is almost-surely nondecreasing
\begin{align}\label{upBdPPP}
\bE_\pi\left[\cJ_{\jl{\tilde{\tau}}}^4\right]&=\bE_\pi\left[\sum_{p=0}^\infty\cJ_{\jl{\tilde{\tau}}}^4\mathbbm{1}_{\{\jl{\tilde{\tau}}\in [2^p, 2^{p+1}]\}}\right]\notag\\&\leq \sum_{p=0}^\infty \bE_\pi\left[\cJ_{2^{p+1}}^4\mathbbm{1}_{\{\jl{\tilde{\tau}}\in [2^p, 2^{p+1}]\}}\right]\notag\\&\leq \sum_{p=0}^\infty \bE_\pi\left[\cJ_{2^{p+1}}^8\right]^{1/2}\bP_\pi\left(\jl{\tilde{\tau}}\geq 2^p\right)^{1/2}.
\end{align}
Since $\tilde{\tau} \le \tilde\rho$, by Markov's inequality and Lemma~\ref{lem:lem11} there exist constants $b>1$ and $K>0$ such that for all $p \ge 0$, 
\begin{equation}\label{e.expTailBd}
\bP_\pi\left(\jl{\tilde{\tau}}\geq 2^p\right)\leq Kb^{-2^p}.
\end{equation}
A standard computation of higher moments of Poisson random variables yields
\[
\bE_\pi\left[\cJ^8_{2^{p+1}}\right]=\sum_{i=0}^8N^i2^{i(p+1)}\stirling{8}{i}=O(N^82^{8(p+1)}),
\]
where $\stirling{j}{i}$ denotes a Stirling number of the second kind.
Continuing from \eqref{upBdPPP}, we obtain
\begin{equation}\label{e.1stterm}
\bE_\pi\left[\cJ_{\jl{\tilde{\tau}}}^4\right]\leq K^{1/2}N^4\sum_{p=0}^\infty O(2^{4(p+1)})b^{-2^{p-1}}<\infty.
\end{equation}
 To bound the second factor in (\ref{e.toBdVar}), we note that
\begin{align}\label{e.2ndterm1}
\bE_\pi\left[\sup_{0\leq s\leq \jl{\tilde{\tau}}}\max_{i\in [N]}|B_i(s)|^4\right]&=\sum_{p=0}^\infty \bE_\pi\left[\sup_{0\leq s\leq \jl{\tilde{\tau}}}\max_{i\in [N]}|B_i(s)|^4\mathbbm{1}_{\{\jl{\tilde{\tau}}\in [2^p, 2^{p+1}]\}}\right]\notag\\&\leq \sum_{p=0}^\infty \bE_\pi\left[\sup_{0\leq s\leq 2^{p+1}}\max_{i\in [N]}|B_i(s)|^4\mathbbm{1}_{\{\jl{\tilde{\tau}}\in [2^p, 2^{p+1}]\}}\right]\notag\\& \leq \sum_{p=0}^\infty \bP_\pi\left(\jl{\tilde{\tau}}\geq 2^p\right)^{1/2}\bE_\pi\left[\sup_{0\leq s\leq 2^{p+1}}\max_{i\in [N]}|B_i(s)|^8\right]^{1/2}.
\end{align} 
 Observe that 
 \[
 \max_{i\in [N]}|B_i(s)|^8\leq d^4\max_{i\in [N]}\max_{j\in [d]}|B_{ij}(s)|^8,
 \]
 and that
\[
\left(\max_{i\in [N]}\max_{j\in [d]}|B_{ij}(t)|, t\geq 0\right)
\] is a submartingale. So Doob's $L^8$ inequality and IID-ness of the one-dimensional Brownian motions $\rB_{ij}$ imply that 
\begin{align}\label{e.2ndterm}
\bE_\pi\left[\sup_{0\leq s\leq 2^{p+1}}\max_{i\in [N]}|B_i(s)|^8\right]&\leq d^4\left(\frac{8}{7}\right)^8 \bE_\pi\left[\max_{i\in [N]}\max_{j\in [d]}|B_{ij}(2^{p+1})|^8\right]\notag\\&\leq d^5N\left(\frac{8}{7}\right)^8 \bE_\pi\left[|B_{11}(2^{p+1})|^8\right]\notag\\&=d^5NO(2^{4(p+1)}),
\end{align} 
where we have used that $\bE_\pi\left[|B_{11}(2^{p+1})|^8\right]=O\left(2^{4(p+1)}\right)$. Combining \eqref{e.2ndterm1}, \eqref{e.2ndterm} and \eqref{e.expTailBd}, we conclude that 
\begin{equation*}\label{e.2ndtermreal}
\bE_\pi\left[\sup_{0\leq s\leq \jl{\tilde{\tau}}}\max_{i\in [N]}|B_i(s)|^4\right]\leq K^{1/2}d^5N\sum_{p=0}^\infty b^{-2^{p-1}}O(2^{4(p+1)})<\infty\, ,
\end{equation*}
which establishes \eqref{eq:jb_bound}.
\end{proof}

\subsection{Barycenter approximation in probability}
In this section, we show that property (5) of Proposition \ref{prop:prop2} holds for the BBB process. We will make frequent use of the following fact on the convergence in probability of certain random variables. 
\begin{fact}\label{l.convProb}
Let $(k_m)_{m\geq 1}$ be a sequence of nonnegative integer valued random variables with the property that there exists a constant $\kappa>0$ such that $m^{-1}k_m\to \kappa$ in probability as $m \to \infty$. Suppose that $(Z_i;i\geq 1)$ are identically distributed nonnegative random variables with $\E{Z_1^2}<\infty$. Then, as $m\to\infty$
\begin{equation}\label{e.convInProb}
m^{-1/2}\max_{1\leq i< k_m} Z_i\to 0
\end{equation} in probability.
\end{fact}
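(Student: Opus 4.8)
The plan is to reduce to a deterministic index and then combine a union bound with a moment-to-tail estimate. First I would establish a deterministic-index version of the claim: if $(n_m)_{m\ge 1}$ are nonnegative integers with $n_m=O(m)$, then $m^{-1/2}\max_{1\le i\le n_m}Z_i\to 0$ in probability. For fixed $\eps>0$, Boole's inequality --- which uses only that the $Z_i$ are identically distributed, not that they are independent --- gives
\[
\p{m^{-1/2}\max_{1\le i\le n_m}Z_i>\eps}\le n_m\,\p{Z_1>\eps m^{1/2}}=n_m\,\p{Z_1^2>\eps^2 m}.
\]
The analytic heart of the matter is the standard fact that any integrable nonnegative random variable $X$ satisfies $t\,\p{X>t}\to 0$ as $t\to\infty$ (which follows from $t\,\p{X>t}\le \E{X\mathbbm{1}_{\{X>t\}}}\to 0$ by dominated convergence). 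Applying this with $X=Z_1^2$, which is integrable since $\E{Z_1^2}<\infty$, and with $t=\eps^2 m$, shows $m\,\p{Z_1^2>\eps^2 m}\to 0$; since $n_m=O(m)$, the displayed bound tends to $0$.

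To pass to the random index $k_m$, I would exploit that $k_m$ concentrates around $\kappa m$. Fix $\delta>0$ and set $n_m:=\lceil(\kappa+\delta)m\rceil$, so that $n_m=O(m)$. Because $n\mapsto\max_{1\le i\le n}Z_i$ is nondecreasing, on the event $\{k_m\le n_m\}$ we have $\max_{1\le i<k_m}Z_i\le\max_{1\le i\le n_m}Z_i$. Splitting accordingly,
\[
\p{m^{-1/2}\max_{1\le i<k_m}Z_i>\eps}\le \p{k_m>n_m}+\p{m^{-1/2}\max_{1\le i\le n_m}Z_i>\eps}.
\]
The first term is at most $\p{m^{-1}k_m>\kappa+\delta}$, which tends to $0$ because $m^{-1}k_m\to\kappa$ in probability and $\kappa+\delta>\kappa$; the second term tends to $0$ by the deterministic-index statement above, since $n_m=O(m)$. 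Letting $m\to\infty$ yields the conclusion \eqref{e.convInProb}.

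The main point requiring care is that the $Z_i$ are only assumed identically distributed, not independent, so no maximal inequality beyond the union bound is available --- but, as the first step shows, Boole's inequality is all that is needed and independence plays no role. The only genuine input is the conversion of the finite second moment into the tail decay $m\,\p{Z_1^2>\eps^2 m}\to 0$; the remainder is bookkeeping around the concentration of $k_m$ near $\kappa m$, and I do not anticipate any serious obstacle.
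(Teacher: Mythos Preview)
Your proof is correct and follows essentially the same approach as the paper: both use a union bound over identically distributed variables, convert $\E{Z_1^2}<\infty$ into the tail estimate $m\,\p{Z_1>\eps m^{1/2}}\to 0$ via dominated convergence, and handle the random index by splitting on whether $k_m$ exceeds a threshold of order $m$. Your unconditional split $\p{k_m>n_m}+\p{m^{-1/2}\max_{i\le n_m}Z_i>\eps}$ is slightly cleaner than the paper's conditional formulation, but the ideas are identical.
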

\begin{proof}
Since $Z_1\in L^2(\bP)$, it is $\bP$-almost surely finite. Hence, by Chebyshev's inequality and the dominated convergence theorem, we obtain
\begin{equation}\label{e.Z0upbd}
\lim_{m\to\infty}m\bP(m^{-1/2}Z_1>\epsilon)\leq \lim_{m\to\infty} \bE\left[\epsilon^{-2}Z_1^2\mathbbm{1}_{m^{-1/2}Z_1>\epsilon}\right]=0
\end{equation}
We have 
 \begin{align}\label{e.lim_toCompute}
&\bP\left(m^{-1/2}\max_{1\leq i< k_m}Z_i>\epsilon \right)\notag\\&\leq \bP\left(m^{-1/2}\max_{1\leq i< \epsilon^{-1}m}Z_i>\epsilon \Big|k_m\leq \epsilon^{-1}m\right)+\bP\left(k_m>\epsilon^{-1}m\right).
\end{align} 
For all $\epsilon>0$ small enough that $\epsilon^{-1}>\kappa$, since $m^{-1}k_m\to\kappa $ in probability as $m\to\infty$ by assumption, it follows that 
\begin{equation}\label{e.lim1}
\lim_{m\to\infty}\bP\left(k_m>\epsilon^{-1}m\right)=0.
\end{equation}
Moreover, since the $Z_i$ are identically distributed, a union bound and \eqref{e.Z0upbd} give
\begin{equation}\label{e.lim2}
\bP\left(m^{-1/2}\max_{1\leq i<\epsilon^{-1}m}Z_i>\epsilon \Big|k_m\leq \epsilon^{-1}m\right)\leq \epsilon^{-1}m \bP(m^{-1/2}Z_1>\epsilon)\xrightarrow{m\to\infty} 0.
\end{equation}
Combining \eqref{e.lim_toCompute}, \eqref{e.lim1} and \eqref{e.lim2} establishes \eqref{e.convInProb}.
\end{proof} 
For each $m\geq \tau_1$, let $k[m]$ denote the unique natural number such that $\tau_{k[m]}\leq m< \tau_{k[m]+1}$, and for $m < \tau_1$ let $k[m]=0$. By Corollary \ref{c.properties1to3}, the increments $(\tau_{k+1}-\tau_{k};k \ge 1)$ are IID; since $\tau_{k+1}-\tau_k = \rho_{k+1}-\rho_k$, by Lemma~\ref{lem:lem11} these increments have some positive finite expected value $\kappa \in (0,\infty)$.  The law of large numbers then implies that $\tau_k/k \convas \kappa$ as $k \to \infty$, from which it follows easily that $m^{-1}k[m] \convas 1/\kappa$ as $m \to \infty$. 
\begin{prop}\label{p.property5}
Fix any $x\in \R^{d\times N}$. As $m\to\infty$, we have
\begin{equation}\label{e.goalConv}
m^{-1/2}\sup_{0\leq t\leq m}|\overline{X}(t)-X_1(t)|\to 0
\end{equation} in probability with respect to $\bP_x$. 
\end{prop}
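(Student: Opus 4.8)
The plan is to reduce the barycenter deviation to the maximal spread of the configuration about the queen particle $X_1$, and then to cut the interval $[0,m]$ along the regeneration times so that the supremum becomes a maximum over roughly $m/\kappa$ identically distributed blocks, to which Fact~\ref{l.convProb} applies. First I would observe that, since $\overline{X}(t)-X_1(t)=N^{-1}\sum_{j\in[N]}(X_j(t)-X_1(t))$, the triangle inequality gives $|\overline{X}(t)-X_1(t)|\le\max_{j\in[N]}|X_j(t)-X_1(t)|$, so it suffices to prove that $m^{-1/2}\sup_{0\le t\le m}\max_{j\in[N]}|X_j(t)-X_1(t)|\to0$ in probability.

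Next I would exploit the regeneration structure. Recall the integer $k[m]$ with $\tau_{k[m]}\le m<\tau_{k[m]+1}$ and $m^{-1}k[m]\convas 1/\kappa$. Since $[0,m]\subseteq[0,\tau_1]\cup\bigcup_{k=1}^{k[m]}[\tau_k,\tau_{k+1}]$, setting $W_0:=\sup_{0\le t\le\tau_1}\max_j|X_j(t)-X_1(t)|$ and $V_k:=\sup_{\tau_k\le t\le\tau_{k+1}}\max_j|X_j(t)-X_1(t)|$ gives
\[
\sup_{0\le t\le m}\max_j|X_j(t)-X_1(t)|\le\max\Big(W_0,\ \max_{1\le k\le k[m]}V_k\Big).
\]
To bound a single block I would write $\max_j|X_j(t)-X_1(t)|\le2\max_j|X_j(t)-X_1(\tau_k)|$ and treat its two subintervals separately. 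On $[\tau_k,\rho_k]$ the events $A_{\tau_k-1}\cap B_{\tau_k-1}$ confine every particle to within a constant $C_0=C_0(N)$ of $X_1(\tau_k)$ (two clusters of radius $\rad$ near $\pm 5e_1$, the queen near $\gamma$, and all descendants held within $\rad$ of their time-$\tau_k$ ancestors). On $[\rho_k,\tau_{k+1}]$ every particle is a descendant of $X_1(\tau_k)$ by Remark~\ref{r.queen}, so its deviation from $X_1(\tau_k)$ is at most $Z_k:=\sup_{0\le s\le\tau_{k+1}-\tau_k}\max_j|\hat X^{(k)}_j(s)-\hat X^{(k)}_1(0)|$. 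Hence $V_k\le 2(C_0+Z_k)$. By Proposition~\ref{p.IIDprocesses} the processes $\rX^{(k)}$ are IID, so the variables $2(C_0+Z_k)$ are IID; and the argument proving Proposition~\ref{p:variance}, in particular the bound \eqref{e.BBBbd}, which dominates the displacement of all $N$ particles and their descendants rather than that of $X_1$ alone, gives $\bE_x[Z_1^2]<\infty$.

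To conclude, since $\tau_1<\infty$ almost surely and $\rX$ undergoes finitely many branchings with continuous ancestral trajectories on $[0,\tau_1]$, the variable $W_0$ is almost surely finite and $m^{-1/2}W_0\to0$ almost surely. For the blocks I would apply Fact~\ref{l.convProb} with $k_m:=k[m]+1$, so that $m^{-1}k_m\to1/\kappa$ in probability, and with the nonnegative, identically distributed, square-integrable variables $2(C_0+Z_i)$ playing the role of its $Z_i$; since $\max_{1\le k\le k[m]}V_k\le\max_{1\le k<k_m}2(C_0+Z_k)$, this yields $m^{-1/2}\max_{1\le k\le k[m]}V_k\to0$ in probability. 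Combining the two estimates gives \eqref{e.goalConv}.

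The main obstacle is the finite-second-moment claim $\bE_x[Z_1^2]<\infty$ for the whole configuration rather than merely the queen. This is, however, already supplied by the proof of Proposition~\ref{p:variance}: the bound \eqref{e.BBBbd} controls $\sup_t\max_{j\in[N]}|X_j(t)-X_j(0)|$, and under the regeneration law every particle starts within $\rad$ of the queen, so the deviations from $X_1(\tau_k)$ inherit the same integrability. A secondary subtlety is bounding the non-descendant particles during $[\tau_k,\rho_k]$, which the geometry of the event $A$ handles at the cost of the additive constant $C_0$.
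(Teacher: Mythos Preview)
Your proof is correct and follows essentially the same approach as the paper: reduce $|\overline X-X_1|$ to $\max_j|X_j-X_1|$, partition along the regeneration times, use the geometry of $A_{\tau_k-1}\cap B_{\tau_k-1}$ to get a deterministic bound on $[\tau_k,\rho_k]$, invoke Proposition~\ref{p.IIDprocesses} for identical distribution of the remaining block suprema, borrow the $L^2$ bound from \eqref{e.BBBbd} and \eqref{eq:jb_bound}, and finish with Fact~\ref{l.convProb}. The only cosmetic differences are that you phrase the block variables via $\hat X^{(k)}$ rather than $X$ directly, and you handle $[0,\tau_1]$ as a separate almost-surely finite term rather than simply restricting to $t\ge\tau_1$; neither changes the substance.
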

\begin{proof}
Since $\tau_1<\infty$ almost surely with respect to $\bP_x$ by Lemma \ref{lem:lem11}, it suffices to prove that as $m\to\infty$
\[
m^{-1/2}\sup_{\tau_1\leq t\leq m}|\overline{X}(t)-X_1(t)|\to 0
\] in probability with respect to $\bP_x$.
For all $t\geq 0$, we have 
\[
|\overline{X}(t)-X_1(t)|\leq \max_{j\in [N]}|X_j(t)-X_1(t)|.
\] Therefore 
\[
\sup_{\tau_1\leq t\leq m} |\overline{X}(t)-X_1(t)|\leq \max_{i\in [k[m]]}\sup_{\tau_i\leq s\leq \tau_{i+1}}\max_{j\in [N]}|X_j(s)-X_1(s)|.
\] 
At time $\tau_i=T_{I_i}+1$, the event $A_{T_{I_i}}$ has just occurred, which implies that $\max_{j \in [N]} |X_j(\tau_i)-X_1(\tau_i)| \le 5+2^{-1}(N+1)^{-1}+5(N-1)^{-1}< 11$. Moreover, the event $B_{T_{I_i}}$ is about to occur, which means that for all times $s \in [\tau_i,\rho_i]=[\tau_i,\tau_{i}+1]$, every particle has distance at most $\rad=\frac{1}{4(N+1)}$ from the location of some time-$\tau_i$ particle.  Therefore, 
\[
\sup_{\tau_i\leq s\leq \rho_i}\max_{j\in [N]}|X_j(s)-X_1(s)| \le 12\, .
\]
Writing 
\[
Z_i\coloneqq \max\left(12,\sup_{\rho_i\leq s\leq \tau_{i+1}}\max_{j\in [N]}|X_j(s)-X_1(s)|\right)
\]
for $i \ge 1$, it follows that 
\[
\sup_{\tau_1\leq t\leq m} |\overline{X}(t)-X_1(t)|\leq \max_{i\in [k[m]]} Z_i. 
\]
To obtain (\ref{e.goalConv}), it thus suffices to verify the conditions of Fact \ref{l.convProb} for $(Z_i;i\geq 1)$. 
Under $\bP_x$, the random variables $\sup_{\rho_i\leq s\leq \tau_{i+1}}\max_{j\in [N]}|X_j(s)-X_1(s)|$ are identically distributed for $i\in [k[m]]$ by Proposition \ref{p.IIDprocesses}, from which it is immediate that $(Z_i;i \geq 1)$ are identically distributed. 
 It only remains to show that $Z_1\in L^2(\bP_x)$. For this, observe that 
 {\color{cjl} \begin{align*}
 \max_{j\in [N]}|X_j(s)-X_1(s)|^2&\leq 2\max_{j\in [N]}|X_j(s)-X_1(\tau_1)|^2+2|X_1(s)-X_1(\tau_1)|^2\\&\leq 4\max_{j\in [N]}|X_j(s)-X_1(\tau_1)|^2,
 \end{align*}

 for all  $s\in[\rho_1,\tau_2]$. Taking the supremum over $s\in[\rho_1,\tau_2]$ and taking expectations, we infer that 
  \[
 \bE_x[Z_1^2]\leq 144+4\bE_x\left[\sup_{\rho_1\leq s\leq \tau_2}\max_{j\in [N]}|X_j(s)-X_1(\tau_1)|^2\right]. 
 \]
 By the strong Markov property and (\ref{e.BBBbd}), we obtain
 \begin{align*} 
 \bE_x&\left[\sup_{\rho_1\leq s\leq \tau_2}\max_{j\in [N]}|X_j(s)-X_1(\tau_1)|^2\right]\\
 &\quad=\bE_\pi\left[\sup_{0\leq s\leq \tilde \tau}\max_{j\in [N]}|X_j(s)-X_j(0)+X_j(0)|^2\right]\\&\quad\leq 2\bE_\pi\left[\sup_{0\leq s\leq \tilde \tau}\max_{j\in [N]}|X_j(s)-X_j(0)|^2\right]+2\bE_\pi\left[\max_{j\in [N]}|X_j(0)|^2\right]\\&\quad\leq 4\bE_\pi\left[4\cJ_{\tilde \tau}^2\sup_{0\leq s\leq \tilde \tau}\max_{j\in [N]}|B_j(s)|^2\right]+4\bE_\pi\left[\max_{i, j\in [N]}|X_j(0)-X_i(0)|^2\right]\\&\quad+2\bE_\pi\left[\max_{j\in [N]}|X_j(0)|^2\right]\\&\quad\leq 16 \bE_\pi\left[\cJ_{ \tilde \tau}^2\sup_{0\leq s\leq \tilde \tau}\max_{j\in[N]}|B_j(s)|^2\right]+ 10\rad^2,
 \end{align*}} which is finite by \eqref{eq:jb_bound}. This proves that $Z_1\in L^2(\bP_x)$; and establishes the convergence in (\ref{e.goalConv}). 
\end{proof}
\subsection{Proof of Theorem \ref{thm:main}}\label{subsec:Drift}
Set $\Delta_1\coloneqq X_1(\tau_2)-X_1(\tau_1)$, and denote by $\nu$ the law of $\Delta_1$ under $\bP_x$; it is a Borel probability measure on $\R^d$ defined by 
\color{cjl}
\begin{align*}
\nu(B) & \coloneqq \bP_x(X_1(\tau_2)-X_1(\tau_1)\in B)\\
& = \bP_{X(0)=0}\Big(X_1(\tau_1)\in B~\Big|\sup_{0 \le s \le 1} \max_{1 \le i \le N(s)} |X_i(s)| \le \rad,N(1)=N\Big)\, ,
\end{align*}
where $\bP_{X(0)=0}$ denotes the probability measure under which $\rX=(X(t),t \ge 0)$ is a BBB started from a single particle at the origin. 
\color{black}
The second equality holds by Proposition \ref{p.IIDprocesses} and makes it clear that the measure $\nu$ does not depend on the initial configuration $x\in \R^{d\times N}$. 
The rotational invariance of Brownian motion immediately implies the following lemma (whose proof we omit).
\begin{lem}\label{lem:lem12} 
The measure $\nu$ is invariant under the action of the group $O(d, \R)$ of $d$-by-$d$ orthogonal matrices with entries in $\R$. 
\end{lem}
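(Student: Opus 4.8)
The plan is to derive the $O(d,\R)$-invariance of $\nu$ from the rotational invariance of Brownian motion, by checking that the entire mechanism producing $\nu$ is equivariant under orthogonal maps. Fix $R\in O(d,\R)$, and for a trajectory write $R\rW:=(RW(t),t\ge 0)$ with $RW(t)=(RW_i(t))_{i}$. By Proposition~\ref{p.IIDprocesses}, $\nu$ is the law of the first coordinate, evaluated at the regeneration time, of a BBB $\tilde{\rX}$ started from a single particle at the origin and conditioned on the event $F$. So it suffices to show (i) that the law of $\tilde{\rX}$ is invariant under $\rW\mapsto R\rW$, and (ii) that the conditioning event $F$, together with the random time at which the displacement is read off, are compatible with this map.

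For (i), I would work with the first construction from Section~\ref{sub:formalism}, in which $\tilde{\rX}$ is a deterministic functional of the Poisson clock $\cJ$, the uniform selectors $(\cU_i)$, and the driving Brownian motions $(\rB_i)$. Replacing $(\rB_i)$ by $(R\rB_i)$ leaves the joint law unchanged, since each $R\rB_i$ is again a standard $d$-dimensional Brownian motion from $0=R\cdot 0$ and independence is preserved. The key observation is that feeding $(R\rB_i)$ into the construction produces exactly $R\tilde{\rX}$: the branching times and branching labels are untouched, and the killed index, namely the $\argmax$ over $j$ of $\big|X_j-(N+1)^{-1}(X_{\cU_i}+\sum_{l}X_l)\big|$, is unchanged by $R$ because $R$ is a Euclidean isometry that commutes with the affine barycenter map. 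Inducting over branching events then gives $R\tilde{\rX}\eqdist\tilde{\rX}$. For the conditioning, $F=\{N(1)=N\}\cap\{\sup_{0\le s\le 1}\max_i|X_i(s)|\le\rad\}$ depends only on the branching structure and on Euclidean norms, both preserved by $R$, so $R\tilde{\rX}\in F$ if and only if $\tilde{\rX}\in F$. Granting the compatibility of the regeneration time discussed below, one obtains for any Borel $B\subset\R^d$ that $\nu(R^{-1}B)=\nu(B)$, i.e.\ $R_*\nu=\nu$.

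The delicate step, which I expect to be the main obstacle, is that the regeneration time $\tau_2-\tau_1$ at which the displacement is read off is built from the events $A$ and $B$, and these single out the distinguished direction $e_1$ (through the target balls $B(\pm 5e_1,\rad)$ and the offset $\gamma$ of \eqref{e.at3}). Consequently the regeneration time is \emph{not} literally invariant under $R$, only equivariant: rerunning the construction with $e_1$ replaced by $Re_1$ yields the pushforward $R_*\nu$, and a change-of-variables argument only shows that the increment laws for different reference directions are rotations of one another, which by itself forces invariance merely under the stabilizer $O(d-1)$ of $e_1$. To upgrade this to full $O(d,\R)$-invariance one must show that the displacement law genuinely does not see the distinguished direction. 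I would attempt this via the decomposition $\Delta_1=\big(\overline{X}(\tau_2)-\overline{X}(\tau_1)\big)+\big((X_1-\overline{X})(\tau_2)-(X_1-\overline{X})(\tau_1)\big)$, in which both offset terms $(X_1-\overline{X})(\tau_i)$ lie in $B(\gamma,\rad)$ by the event $A_{T_{I_i}}$ and are identically distributed; the distinguished direction thus enters $\Delta_1$ only through a bounded, matched correction, and the task reduces to showing that this correction's contribution to the law of $\Delta_1$ is rotationally symmetric and cancels the directional bias carried by the barycenter increment (a cancellation that is in any case forced by the drift-free conclusion of Theorem~\ref{thm:main}). Establishing this insensitivity to the reference direction rigorously is the crux; the remainder is the routine equivariance bookkeeping sketched above.
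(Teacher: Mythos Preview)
The paper does not give a proof; it states only that the lemma is immediate from the rotational invariance of Brownian motion and omits the argument. Your first two paragraphs---the $O(d)$-equivariance of the BBB construction (branching times, branching labels, and the distance-to-barycenter selection rule are all preserved under $\rW\mapsto R\rW$) together with the invariance of the conditioning event $F$---are exactly the content the authors have in mind, and this part of your write-up is correct and matches the paper's intent.

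Your third paragraph, however, identifies a genuine subtlety that the paper does not address. The stopping time $\tau_1$ is built from the events $A_t$, which single out the direction $e_1$, so the functional $\rW\mapsto\tau_1(\rW)$ is \emph{not} rotation-invariant. As you correctly derive, equivariance alone gives only $R_*\nu^{(e_1)}=\nu^{(Re_1)}$; this yields invariance under the stabiliser $O(d-1)$ of $e_1$ but not under all of $O(d)$, unless one separately proves that $\nu^{(v)}$ is independent of the reference direction $v$. Your proposed decomposition does not establish this: the offsets $(X_1-\overline X)(\tau_i)$ are constrained near $\gamma$, which for even $N$ is a nonzero multiple of $e_1$, and invoking the conclusion of Theorem~\ref{thm:main} to force cancellation is circular since the lemma is used in that proof. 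So your concern is legitimate and your resolution is incomplete; this appears to be a gap in the paper's exposition as much as in your attempt. Note, though, that the paper only uses the lemma to deduce $\bE_x[\Delta_1]=0$, and that weaker fact can be recovered without it: start the BBB from the rotation-invariant configuration $x=(0,\dots,0)$, so that the law of $X_1(t)$ is $O(d)$-invariant for every fixed $t$; the IID structure of Corollary~\ref{c.properties1to3} and the second-moment bound of Proposition~\ref{p:variance} give $X_1(t)/t\to\bE[\Delta_1]/\bE[\tau_2-\tau_1]$ almost surely, and a rotation-invariant sequence converging almost surely to a constant forces that constant to be fixed by every $R\in O(d)$, hence zero.
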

We are now ready to prove Theorem \ref{thm:main}.
\begin{proof}[Proof of Theorem \ref{thm:main}]
Fix $x\in \R^{d\times N}$. We work on the probability space $(\Omega, \cF, \bP_x)$. 
When $N=1$, the process $\rX$ consists of a single Brownian motion so the theorem holds with $\sigma(d, 1)=1$ and no drift term. 

Next suppose that $N=2$. In this case, the first construction of the BBB process given in Section~\ref{sub:formalism} simplifies as follows. Let $(\zeta_i;i\geq 1)$ be the jump times for a counting process $\cJ= (\cJ_t, t\geq 0)$ for a Poisson point process on $[0,\infty)$ with rate $2$ and define $\zeta_0=0$. We denote by $\rX_1=(X_1(t), t\geq 0)$ and $\rB=(B(t), t\geq 0)$ two $d$-dimensional Brownian motions started from $x_1\in \R^d$ and $0\in \R^d$, respectively. Define a process $\rX_2=(X_2(t), t\geq 0)$ by $X_2(0)=x_2\in \R^d$, and inductively for each $i\geq 0$ by
\[
X_2(t)=X_1(\zeta_i)+B(t)-B(\zeta_i), \;\;\;t\in [\zeta_i, \zeta_{i+1}).
\]
Thus, when $N=2$, the BBB process $\rX=(\rX_1, \rX_2)$ consists of a Brownian motion $\rX_1$ to which we attach branches $\rX_2$ that can only grow between consecutive branching times. For $s\geq 0$, let $k[s]$ be the unique positive integer such that $\zeta_{k[s]}\leq s\leq \zeta_{k[s]+1}$. By the triangle inequality, we have
\begin{align*}
&\sup_{0\leq t\leq 1}\left|\frac{\overline{X}(t m)}{\sqrt{m}} - \frac{X_1(t m)}{\sqrt{m}}\right|\\&=\sup_{0\leq t\leq 1}\left|\frac{X_2(t m)}{2\sqrt{m}} - \frac{X_1(t m)}{2\sqrt{m}}\right|\\&\leq \sup_{0\leq t\leq 1}\left|\frac{X_2(t m)}{2\sqrt{m}} - \frac{X_2(\zeta_{k[t m]})}{2\sqrt{m}}\right|+ \sup_{0\leq t\leq 1}\left|\frac{X_1(t m)}{2\sqrt{m}} - \frac{X_1(\zeta_{k[t m]})}{2\sqrt{m}}\right|\\&\leq \max_{0\leq i<k[m]}\sup_{s\in [\zeta_i, \zeta_{i+1}]}\frac{|X_2(s)-X_2(\zeta_i)|}{2\sqrt{m}}+\max_{0\leq i<k[m]}\sup_{s\in [\zeta_i, \zeta_{i+1}]}\frac{|X_1(s)-X_1(\zeta_i)|}{2\sqrt{m}}.
\end{align*}
We claim that the last expression converges to $0$ in probability. Since $\zeta_2-\zeta_1$ has exponential tails, and for each $j\in \{1,2\}$,
\[
(X_j(s)-X_j(\zeta_1), s\in [\zeta_1, \zeta_2])
\] is a Brownian motion, we infer that
\[
\sup_{s\in [\zeta_1,\zeta_2]}|X_j(s)-X_j(\zeta_1)|\in L^2(\bP_x).
\] 
Therefore, Fact \ref{l.convProb} implies the convergence to $0$ in probability. Since the initial value of $\overline{\rX}$ is sent to $0$ when $m\to\infty$, we conclude that $\overline{\rX}$ satisfies an invariance principle with $\sigma(d, 2)=1$.

Suppose now that $N\geq 3$. By Lemma~\ref{lem:main} and Proposition \ref{prop:prop2} the barycenter $\overline{\rX}$ satisfies an invariance principle, i.e. there exists a non-zero $d$-by-$d$ matrix $\Sigma=\Sigma(d,N)$ such that \eqref{e.geninv} holds. We next show that the drift term in the invariance principle is zero for any $d\geq 1$ and $N\geq 3$. By Proposition \ref{prop:prop2}, the drift is $\alpha= \bE_x[\Delta_1]\bE_x[\tau_2-\tau_1]^{-1}$. Moreover, we note that property (4) in Proposition \ref{prop:prop2}, combined with the Cauchy-Schwarz inequality, immediately implies that $\Delta_i=X_1(\tau_{i+1})-X_1(\tau_i)\in L^1(\bP_\pi)$, for each $i\geq 1$. In particular, $\bE_x[\Delta_1]<\infty$. The rotational invariance from Lemma~\ref{lem:lem12} then immediately implies that $\bE_x[\Delta_1]=0$, and hence $\alpha=0$. Finally, since for any $m>0$, the process $\left(m^{-1/2}\overline{X}(tm),0\leq t\leq 1\right)$ is rotationally invariant, then the limit process $(\Sigma B(t), 0\leq t\leq 1)$ is also rotationally invariant. 
Considering rotations of the form $(x,y)\mapsto (-y, x)$ applied to all pairs of coordinates, rotational invariance implies that $(\Sigma B(t), 0\leq t\leq 1)$ has uncorrelated and identically distributed coordinates. Since $(\Sigma B(t), 0\leq t\leq 1)$ is a Gaussian process, this implies its coordinates are in fact IID, and thus $\Sigma(d,N)=\sigma(d,N)I_d$ for some $\sigma(d,N)>0$. 
\end{proof}

\section{Proofs of Technical Ingredients}\label{sec:properties}

In this section, we include the proofs of Lemma~\ref{lem:lem6}, Lemma~\ref{lem:lem10}, and Theorem \ref{thm:HarrisRec}. Recall from Section~\ref{subsec:ConstrOutline} that 
$C_L=\{x\in \R^{d\times N}: E(x)\leq L\}$ is the set of configurations with extent at most $L$.

\subsection{Two events with positive probability}\label{subsec:2events} We begin by proving that the events $A$ and $B$ defined by \eqref{e.at1}-\eqref{e.at3} and \eqref{e.bt1},\eqref{e.bt2} have positive probability.
\begin{lem} \label{lem:lem3}
Let $A=A_0=A_{0,1}\cap A_{0,2}\cap A_{0,3}$ be defined by \eqref{e.at1}-\eqref{e.at3} with $t=0$. There exists $\epsilon=\epsilon_{L,N}>0$ such that \[\inf_{x\in C_{L}}\bP_{x}(A)>\epsilon.\]
\end{lem}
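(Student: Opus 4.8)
The plan is to decompose $A=A_{0,1}\cap A_{0,2}\cap A_{0,3}$ and to exploit the independence built into the first construction of the BBB process from Section~\ref{sub:formalism}. The event $A_{0,1}$ that no branching occurs in $[0,1]$ is exactly $\{\cJ_1=0\}$, which has probability $e^{-N}$ and is $\sigma(\cJ)$-measurable, hence independent of the driving Brownian motions $(\rB_i)_{i\in[N]}$. On $A_{0,1}$ no killing or branching takes place, so each particle simply follows its Brownian motion: $X_j(1)=x_j+B_j(1)$ for every $j\in[N]$. Since $\overline{X}(0)=\overline{x}:=N^{-1}\sum_i x_i$ is deterministic given $X(0)=x$, the landing conditions in $A_{0,2}$ and $A_{0,3}$ become conditions on the independent increments $B_j(1)$, which are independent of $A_{0,1}$. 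I would therefore write
\[
\bP_x(A)=e^{-N}\prod_{j\in[N]}\bP\big(B_j(1)\in B(c_j,\rad)\big),
\]
where $c_j$ is the shifted target-ball center for particle $j$: $c_j=-5e_1-(x_j-\overline{x})$ for $j\in G'$, $c_j=5e_1-(x_j-\overline{x})$ for $j\in D'$, and $c_1=\gamma-(x_1-\overline{x})$. (Note that $\{1\}\cup G'\cup D'=[N]$, so every particle has a target, and the factorization over $j$ uses the independence of the $B_j$.)

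Next I would bound each factor below uniformly over $x\in C_L$. This is the only place the hypothesis $x\in C_L$ is used: the extent bound gives $|x_j-\overline{x}|\le E(x)\le L$ for every $j$, and combined with $|{\pm}5e_1|=5$ and $|\gamma|\le 5$ this yields $|c_j|\le L+5$ for all $j$. Hence every center lies in a fixed bounded region, and the standard Gaussian lower bound gives
\[
\bP\big(B_j(1)\in B(c_j,\rad)\big)=\int_{B(c_j,\rad)}(2\pi)^{-d/2}e^{-|w|^2/2}\,dw\ge (2\pi)^{-d/2}e^{-(L+5+\rad)^2/2}\,V_d\,\rad^{\,d}=:p,
\]
where $V_d$ denotes the volume of the unit ball in $\R^d$ and $p=p(L,N,d)>0$. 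Taking $\epsilon:=e^{-N}p^N>0$ then gives $\inf_{x\in C_L}\bP_x(A)\ge\epsilon$, as required.

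The argument is essentially routine once the independence is set up; the one point requiring care is precisely this uniformity. Without the extent constraint a particle could start arbitrarily far from its target ball, driving the corresponding landing probability to zero, so it is the bound $|x_j-\overline{x}|\le L$ that keeps all of the shifted centers $c_j$ within a compact set and makes the Gaussian lower bound uniform. I would also flag that both the independence of $A_{0,1}$ from the Brownian increments and the factorization over $j$ rest on the Poisson branching clock $\cJ$ and the motions $(\rB_i)_{i\in[N]}$ being mutually independent, which is exactly the content of the first construction.
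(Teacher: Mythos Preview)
Your proof is correct and follows essentially the same approach as the paper: condition on no branching in $[0,1]$ (probability $e^{-N}$), observe that on this event each $X_j(1)-\overline{x}$ is a shifted standard Gaussian independent across $j$ and independent of the branching clock, and then lower-bound the product of landing probabilities uniformly over $x\in C_L$. The paper groups the factors as $E_1,E_2,E_3$ rather than over individual particles and leaves the final infimum implicit, whereas you compute an explicit Gaussian lower bound via $|c_j|\le L+5$; this is a cosmetic difference, and your version is if anything slightly more self-contained.
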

\begin{proof}
First, note that the event $A$ can be rewritten for the process $\rX$ (instead of $\rX-\overline{X}(0)$) by assuming that $\overline{X}(0)=0$. Hence, it is enough to show that there exists $\epsilon=\epsilon_{L,N}>0$ such that 
\[
\inf\{\bP_{x}(A):x\in C_{L}, \overline{x}=0\}>\epsilon. 
\] 
Fix any $x\in C_{L}$ with $\overline{x}=0$. We write $A$ as
\[A=A_{0,1}\cap E_1 \cap E_2 \cap E_3,\] where the events $E_i$ are defined as follows, using the same notation as in \eqref{e.at1}-\eqref{e.at3}. Define \begin{align*}
    &E_1\coloneqq \{X_j(1)\in B(-5e_1, \rad), \forall j\in G'\},\\&E_2\coloneqq \{X_j(1)\in B(5e_1, \rad), \forall j\in {D'}\},
\end{align*} and 
\[
E_3\coloneqq \{X_1(1)\in B(\gamma, \rad)\},
\]
where as before $\rad:=\frac{1}{4(N+1)}\in (0,1)$, and we recall that $\gamma$ is roughly the location of the barycenter of $G'\cup D'$. \jl{ Since we start from a deterministic configuration $x$, and conditioning on $A_{0,1}$ means there are no branching events between times $0$ and $1$, the positions of the particles at time $1$ are conditionally independent given $A_{0,1}$. Therefore,  
 \[
 \bP_{x}(A)=\bP_{x}(A_{0,1})\prod_{k=1}^3\bP_{x}\left(E_k\big|A_{0,1}\right).
\]
We can then bound each term in the last product as follows:} writing $\cN(x,I_d)$ for a $d$-dimensional Gaussian centered at $x$, 
\begin{align*}&\bP_{x}(E_1|A_{0,1})\geq \left(\min_{j\in G'}\bP\left(\cN(x_j,I_d)\in B(-5e_1,\rad)\right)\right)^{|G'|},\\&\bP_{x}(E_2|A_{0,1})\geq \left(\min_{j\in D'}\bP\left(\cN(x_j,I_d)\in B(5e_1,\rad)\right)\right)^{|D'|},\\&\bP_{x}(E_3|A_{0,1})\geq \bP\left(\cN(x_1,I_d)\in B(\gamma, \rad)\right),\end{align*} and $\bP_{x}(A_{0,1})=e^{-N}$. Taking the infimum over admissible configurations $x\in C_L$ satisfying $\overline{x}=0$, we obtain 
  \begin{align*}
  &\inf\{\bP_{x}(A):x\in C_L, \overline{x}=0\}\\
&\geq  e^{-N} \inf_{x\in C_L, \overline{x}=0}\left[\left(\min_{j\in G'}\bP\left(\cN(x_j,I_d)\in B(-5e_1, \rad)\right)\right)^{\left\lfloor\frac{N-1}{2}\right\rfloor}\right.\times\\&\times \left.\left(\min_{j\in D'}\bP\left(\cN(x_j,I_d)\in B(5e_1, \rad)\right)\right)^{\left\lceil\frac{N-1}{2}\right\rceil}\bP\left(\cN(x_1,I_d)\in B(\gamma, \rad)\right)\right]\\&\eqqcolon\epsilon_{L,N}>0. \qedhere
\end{align*} 
\end{proof} 
\begin{lem}\label{lem:lem4}
Let $B=B_0=B_{0,1}\cap B_{0,2}$ be defined by \eqref{e.bt1},\eqref{e.bt2}. There exists $\epsilon=\epsilon_{L,N}>0$ such that  \[\inf_{x\in C_{L}}\bP_{x}(B|A)>\epsilon.\]
\end{lem}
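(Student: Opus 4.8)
The plan is to exploit the fact that the event $A=A_0$ is $\cG_1$-measurable while $B=B_0$ is $\sigma(W(s),1\le s\le 2)$-measurable, and to apply the Markov property of the BBM $\rW$ at time $1$. Conditioning on $A$ is legitimate since $\bP_x(A)>0$ for $x\in C_L$ by Lemma~\ref{lem:lem3}. On the event $A$, condition (\ref{e.at1}) guarantees that no branching occurs in $[0,1]$, so $\cN(1)=N$, the BBB indices satisfy $\cI_j(1)=j$ for every $j\in[N]$, and $W_j(1)=X_j(1)$ lies in one of the three balls $B(-5e_1+\overline X(0),\rad)$, $B(5e_1+\overline X(0),\rad)$, $B(\gamma+\overline X(0),\rad)$ prescribed by (\ref{e.at2})--(\ref{e.at3}). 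Writing $\bP(B\mid\cG_1)=\Psi(W(1))$ for a suitable measurable $\Psi$, it therefore suffices to bound $\Psi$ from below, uniformly over all time-$1$ configurations consistent with $A$.

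The key structural input is the branching property of BBM: given $\cG_1$, the $N$ subtrees rooted at the time-$1$ particles $W_1(1),\dots,W_N(1)$ evolve as independent BBMs over $[1,2]$. Under the identification $\cI_j(1)=j$, the event $B$ contains the sub-event on which (i) the subtree rooted at $W_1(1)$ undergoes exactly $N-1$ branchings in $[1,2]$, with all of its (then at most $N$) descendant trajectories remaining within distance $\rad$ of $W_1(1)$; and (ii) for each $i\in\{2,\dots,N\}$ the particle $W_i(1)$ does not branch in $[1,2]$ and its trajectory stays within distance $\rad$ of $W_i(1)$. This sub-event indeed lies in $B$, since exactly $N-1$ branchings satisfies the ``at least $N-1$'' requirement of (\ref{e.bt1}), the non-branching in (ii) gives trivial subtrees for $i\ge 2$, and the confinements supply (\ref{e.bt2}). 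By the branching property these $N$ events are conditionally independent given $\cG_1$, and by translation invariance of Brownian motion their probabilities do not depend on the root positions $W_j(1)$. Denoting by $q=q(d,N,\rad)$ the probability of the subtree-$1$ event and by $p=p(d,\rad)$ the probability of a single-subtree event in (ii), we obtain $\Psi(W(1))\ge q\,p^{N-1}$ on $A$, whence $\bP_x(B\mid A)\ge q\,p^{N-1}$ for every $x\in C_L$.

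It remains to check positivity of $q$ and $p$; since both depend only on $d,N,\rad$, the resulting $\epsilon=\epsilon_{L,N}:=q\,p^{N-1}$ is uniform over $x\in C_L$ (the subscript $L$ is vestigial here). For $p$, a single BBM particle fails to branch in a unit interval with probability $e^{-1}$, and conditionally on this its trajectory is a standard Brownian motion, so $p=e^{-1}\,\bP\big(\sup_{0\le s\le 1}|\beta(s)|\le\rad\big)>0$, where $\beta$ is a standard $d$-dimensional Brownian motion from the origin. The positivity of $q$ is the one genuinely nontrivial point, and I expect it to be the main obstacle, as it requires simultaneously prescribing the branching structure and confining a growing collection of particles inside a small ball. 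I would handle it by first conditioning on the genealogy: a rate-$1$ Yule process started from one particle has exactly $N-1$ branchings in unit time with some probability $c_N>0$, in which case exactly $N$ trajectory segments are produced. Conditionally on this (finite) genealogical tree and its branch times, the spatial evolution decouples into independent Brownian motions along the edges, glued at the branch points; the probability that their union stays inside $B(W_1(1),\rad)$ throughout is bounded below by a positive constant depending only on $d,N,\rad$. Integrating over the positive-probability set of admissible genealogies yields $q>0$, which completes the proof.
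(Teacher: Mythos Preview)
Your proof is correct and follows essentially the same approach as the paper's: apply the Markov property at time $1$, use that on $A$ no branching has occurred so $\cN(1)=N$ and $\cI_j(1)=j$, then invoke the branching property of the BBM to factor the probability over the $N$ independent subtrees. The paper carries this out by shifting to time $0$ and decomposing into five events $U_1,\dots,U_5$ with chained conditional probabilities, whereas you go directly to the product $q\,p^{N-1}$ via translation invariance; your route is a bit more streamlined, and your observation that the $L$-dependence in $\epsilon_{L,N}$ is vestigial is correct (the paper's bound also depends only on $d,N$ once one notes that $\cS'$ does not depend on $L$).
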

\begin{proof}
Let $\cS'\subset \cS$ be the set of possible values of $X$ at time $1$, given that $A$ occurred. Namely, $x\in \cS'$ if and only if
\begin{itemize}
\item[(a)] $C=\{1\}$, $G=\{2, \cdots, \left\lceil\frac{N+1}{2}\right\rceil\}$, and $D=\{\left\lceil\frac{N+1}{2}\right\rceil+1, \cdots, N\}$,
\item[(b)] $x_1\in B(\gamma, \rad)$,
\item[(c)] $x_j\in B(-5e_1, \rad)$ for all $j\in G$, and $x_j\in B(5e_1, \rad)$ for all $j\in D$.
\end{itemize}
In particular, we have that $G=G'$ and $D=D'$. By the Markov property and \eqref{e.at1}-\eqref{e.at3}, we have 
\begin{equation}\label{e.MarkovPropB}
\inf_{x\in C_L}\bP_x(B|A)\geq \inf_{x\in \cS'}\bP_x(\tilde B),
\end{equation}
where $\tilde B=\tilde{B}_{0,1}\cap \tilde{B}_{0,2}$ is the event $B$ ``taking place in the time interval $[0,1]$'' (rather than $[1,2]$).
Recalling the embedding of the BBB within a BBM as $X_j(t)=W_{\mathcal{I}_j(t)}(t)$, with $\mathcal{I}_j(0)=j$ for $j \in [N]$, 
we can decompose $\tilde{B}$ as the intersection of the following five events:
\begin{align*}
U_1\coloneqq&\bigcap_{j\in G'\cup D'}\{W_{j}(s) \textnormal{ doesn't branch from time $0$ until time $1$}\}\subset \tilde B_{0,1},\\
U_2\coloneqq &\{\textnormal{Each particle in $G'$ stays in $B(-5e_1, 2\rad)$ from time $0$ until time $1$}\}\subset \tilde B_{0,2} \\
U_3\coloneqq &\{\textnormal{Each particle in $D'$ stays in $B(5e_1, 2\rad)$ from time $0$ until time $1$}\}\subset \tilde B_{0,2},\\
U_4\coloneqq &\{\textnormal{The descendants of $W_1(0)$ stay in }B(\gamma, 2\rad)\textnormal{ for all times in }[0,1]\}\subset \tilde B_{0,2},
\\ U_5\coloneqq &\{\textnormal{$W_{1}(0)$ and its descendants branch at least } N-1\textnormal{ times}\textnormal{ in }[0,1]\}\subset \tilde B_{0,1}.\end{align*} 

Observe that for any $x\in \cS'$, we have
\jl{ 
\begin{equation}\label{e.probProduct}
\bP_x(\tilde B)=\left(\prod_{k=1}^3\bP_x(U_k|\cap_{j=1}^{k-1}U_j)\right)\bP_{x}(U_{4}\cap U_{5}|U_{1}\cap U_{2}\cap U_{3}).
\end{equation}}
We bound each factor in \eqref{e.probProduct} individually as follows. We have 
\[
\bP_x(U_1)\geq \bP_x(\text{a fixed particle doesn't branch in [0,1]})^{N-1}=e^{-(N-1)},
\]
Given $U_1$, the particles in $G'$ are independent until time $1$ or until they are killed, whichever comes first. Therefore, if $\rB=(B(s), s\geq 0)$ denotes a $d$-dimensional Brownian motion, then we have 
\[
\bP_x(U_2|U_1)\geq \min_{j\in G'}\bP(B(s)\in B(-5e_1, 2\rad), \forall s\in [0,1]|B(0)=x_j)^N.
\] 
Similarly, we have 
\[
\bP_x(U_3|U_1\cap U_2)\geq \min_{j\in D'}\bP(B(s)\in B(5e_1, 2\rad), \forall s\in [0,1]|B(0)=x_j)^N.
\]
\lou{Finally, for $x \in \cS'$, we have $x_1 \in B(\gamma,\rad)$, so $U_4 \cap U_5$ contains the event that, in the BBM, the particle $W_1(0)$ and its descendants branch at least $N-1$ times in the time interval $[0,1]$, and that all descendants of $W_1(0)$ stay within distance $\rad$ of $W_1(0)$ up to time $1$. By the branching property of the BBM, it follows that 
\begin{align*}
\bP_x(U_4 \cap U_5|U_1\cap U_2\cap U_3)
& \ge \bP_{W(0)=(0)}\Big(\cN(1) \ge N,\sup_{s \in [0,1],j \in \cN(s)}|W_j(s)| \le \rad\Big)\, ,
\end{align*}
where $\bP_{W(0)=(0)}$ denotes a probability measure under which $\rW=(W(s),s \ge 0)$ is a standard BBM started from a single particle at the origin. }
Combining these lower bounds, and taking the infimum over $x\in \cS'$ in \eqref{e.probProduct}, we obtain
\[
\inf_{x\in C_L}\bP_x(B|K)>\epsilon_{L,N},
\] for some $\epsilon_{L,N}>0$. The lemma then follows from \eqref{e.MarkovPropB}.
\end{proof}
\begin{proof}[Proof of Lemma~\ref{lem:lem6}]
Lemmas \ref{lem:lem3} and \ref{lem:lem4} imply that 
\[
    \inf_{x\in C_{L}}\bP_{x}(A\cap B)\geq \left(\inf_{x\in C_{L}}\bP_{x}(A)\right)\left(\inf_{x\in C_{L}}\bP_{x}(B|A)\right)> 0\, .\qedhere
\]
\end{proof}
\subsection{Proof of Lemma~\ref{lem:regen_unambiguous}}\label{subsec:deterministic}

This section is devoted to proving the deterministic Lemma~\ref{lem:regen_unambiguous}. For this we will use the following two claims.
\begin{claim}\label{claim:safe_branching}
Fix a configuration $x,w$ and an integer $\ell \in [N]$ with $w_\ell > 0$. Write $k=k(x,w,\ell)$ and $w^*=g(x,w,\ell)$. If $w^{*}_{k} \geq 1$ and $k \ne \ell$ then also $k(x,w^*,\ell) \ne \ell$. 
\end{claim}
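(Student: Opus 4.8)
The plan is to reduce the claim to a single strict distance comparison, and then verify that comparison by an elementary vector computation whose crux is a clean factorization.

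First I would record the two barycenters involved. Write $b := \frac{1}{N+1}\big(\sum_{i} w_i x_i + x_\ell\big)$ for the barycenter in the first selection and $b' := \frac{1}{N+1}\big(\sum_i w^*_i x_i + x_\ell\big)$ for the one in the second; substituting $w^*_i = w_i + \mathbbm{1}_{\{i=\ell\}} - \mathbbm{1}_{\{i=k\}}$ gives $b' - b = \frac{1}{N+1}(x_\ell - x_k)$. Since $w_\ell > 0$, the index $\ell$ is admissible in the $\argmax$ defining $k = k(x,w,\ell)$, so $|x_k - b| \ge |x_\ell - b|$; as $x$ is unambiguous and $k \neq \ell$, this is strict. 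Because $k \neq \ell$ and $w_\ell > 0$ we have $w^*_\ell = w_\ell + 1 > 0$, and by hypothesis $w^*_k \ge 1$, so both $k$ and $\ell$ are admissible in the $\argmax$ defining $k(x,w^*,\ell)$. Hence it suffices to prove the strict inequality $|x_k - b'| > |x_\ell - b'|$, since then $\ell$ cannot be the maximizer.

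To establish this I would set $u := x_\ell - b$ and $v := x_k - b$, so the hypothesis reads $|v| > |u|$, and use $b' - b = \frac{1}{N+1}(u - v)$ to compute $x_\ell - b' = \frac{N}{N+1}u + \frac{1}{N+1}v$ and $x_k - b' = \frac{N+2}{N+1}v - \frac{1}{N+1}u$. Clearing the factor $N+1$, the target becomes $|(N+2)v - u|^2 > |Nu + v|^2$; expanding and collecting terms, the difference equals $(N+1)\big[(N+3)|v|^2 - (N-1)|u|^2 - 4\langle u,v\rangle\big]$, so it remains to show the bracket is positive.

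The only real work is the final step. Bounding the cross term via Cauchy--Schwarz, $\langle u,v\rangle \le |u|\,|v|$, reduces the bracket to the scalar quantity $(N+3)b^2 - 4ab - (N-1)a^2$ with $a := |u|$, $b := |v|$ and $b > a \ge 0$. The fact that makes everything click is that, as a quadratic in $b$, this factors as $(N+3)(b-a)\big(b + \frac{(N-1)a}{N+3}\big)$ --- its positive root is exactly $b = a$ --- and is therefore strictly positive precisely when $b > a$, which is our hypothesis. This yields $|x_k - b'| > |x_\ell - b'|$ and hence the claim. I expect the main (and essentially only) obstacle to be spotting and verifying this factorization; the geometric meaning is simply that displacing the barycenter along the direction $x_\ell - x_k$ widens, rather than narrows, the gap between the distances to $x_k$ and to $x_\ell$.
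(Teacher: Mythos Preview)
Your proof is correct and follows the same skeleton as the paper: you compute the barycenter shift $b' - b = \tfrac{1}{N+1}(x_\ell - x_k)$, reduce the claim to showing that the strict inequality $|x_k - b| > |x_\ell - b|$ persists after this shift, and then verify that inequality. The only difference is in how the final step is carried out.

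The paper does this last step in one line by a geometric (equivalently, half-space) argument: the set $\{p : |p - x_\ell| < |p - x_k|\}$ is the open half-space on the $x_\ell$ side of the perpendicular bisector of $x_\ell$ and $x_k$, and translating any point of this half-space by a nonnegative multiple of $x_\ell - x_k$ keeps it there. Algebraically this is just $\langle x_\ell - x_k, b'\rangle \ge \langle x_\ell - x_k, b\rangle$, since $b' - b$ is a positive multiple of $x_\ell - x_k$. Your route---expanding $|(N+2)v - u|^2 - |Nu + v|^2$, invoking Cauchy--Schwarz, and spotting the factorization of the resulting quadratic---reaches the same conclusion but with more computation than needed; indeed, the ``geometric meaning'' you describe in your last sentence \emph{is} the paper's entire argument, and once stated it makes the Cauchy--Schwarz and factorization steps unnecessary.
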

\begin{proof}
Let
\[
y=\frac{1}{N-w_\ell-w_k}\sum_{i \in [N]\setminus \{k,\ell\}} w_ix_i\, ,
\]
and define 
\[
b = \frac{N-w_\ell-w_k}{N+1}y + \frac{(w_\ell+1)}{N+1}x_\ell + \frac{w_k}{N+1} x_k\, ;
\]
this is the barycenter of the configuration obtained from $x,w$ after letting $x_\ell$ branch (increasing its weight by $1$) but before any killing takes place (so the total weight is $N+1$ rather than $N$). 
Likewise, upon killing particle $x_{k}$ and letting $x_{\ell}$ branch consecutively, define 
\[
b^* = \frac{N-w_\ell-w_k}{N+1}y + \frac{(w_\ell^*+1)}{N+1}x_\ell + \frac{w_k^*}{N+1}x_k. 
\]
We have $(N-w_\ell-w_k)y=\sum_{i \in [N]\setminus \{k,\ell\}} w_ix_i = \sum_{i \in [N]\setminus \{k,\ell\}} w_i^*x_i$ since $w_i^*=w_i$ for all $i\in [N]\backslash\{k,l\}$ and $N-w^{*}_\ell-w^{*}_k=N-w_{\ell}-w_{k}$ since $w_\ell^*=w_\ell+1$ and $w_k^*=w_k-1$. Thus, the point $b^*$ is the barycenter of the configuration obtained from $x,w^*$ by letting $x_\ell$ branch but before any killing takes place, and also, 


\begin{equation}\label{e.rel_b_bstar}
b^*= \frac{N-w_{\ell}-w_{k}}{N+1}y + \frac{(w_\ell+2)}{N+1}x_\ell + \frac{(w_k-1)}{N+1}x_k
 = b + \frac{x_\ell-x_k}{N+1}. 
\end{equation}

As depicted in Figure~\ref{fig:claim1} below, since $|x_\ell-b| < |x_k-b|$, it follows immediately that $|x_\ell-b^*| < |x_k-b^*|$. 
\begin{figure}
  \centering
  \includegraphics[width=0.6\linewidth]{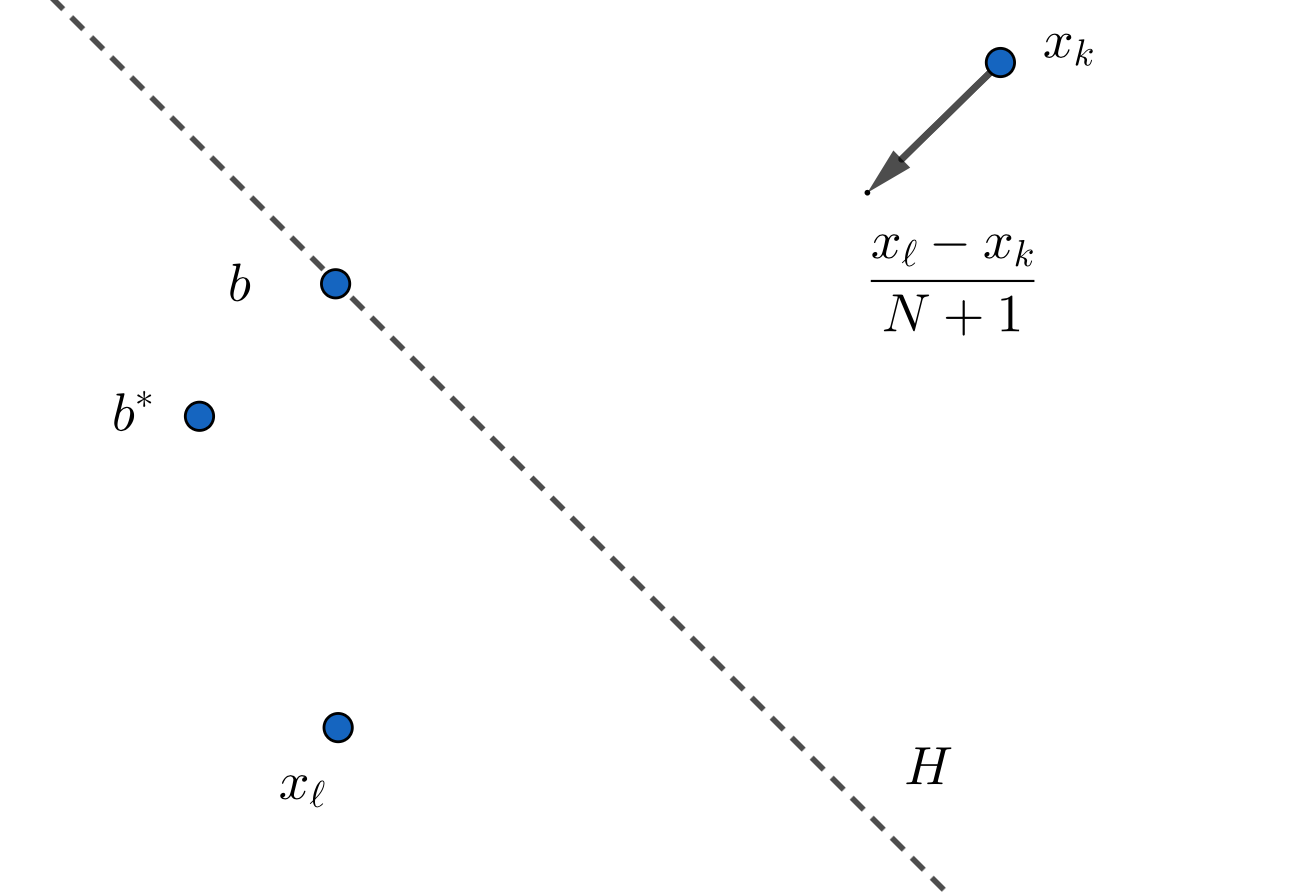}
  \caption{Pictorial description of (\ref{e.rel_b_bstar}) showing that $|x_\ell-b^*| < |x_k-b^*|$. Here, $H$ is the hyperplane perpendicular to $x_\ell-x_k$ and passing through $b$.}
  \label{fig:claim1} 
\end{figure} 
Since $w_k^*\geq 1$, this means there is at least one positive-weight particle further from $b^*$ than $x_\ell$, and thus $k(x,w^*,\ell)\ne\ell$. 
\end{proof}
\begin{claim}\label{claim:close_branching}
Fix a configuration $x,w$, let $b_0=N^{-1}\sum_{i=1}^N w_ix_i$, and let
\begin{equation}\label{eq:idef}
\ell\coloneqq \argmin_{i\in [N]:w_i>0}\left|x_i-b_0\right|. 
\end{equation}
If there exists $j \in [N]\setminus \{\ell\}$ with $w_j > 0$ then $k(x,w,\ell)\ne \ell$. 
\end{claim}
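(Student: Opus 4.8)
The plan is to show that letting the closest positive-weight particle $x_\ell$ branch moves the barycenter only slightly \emph{toward} $x_\ell$, so that $x_\ell$ certainly cannot become the particle farthest from the new barycenter.

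First I would record the location of the new barycenter. Writing $b := \frac{1}{N+1}\big(\sum_{i=1}^N w_i x_i + x_\ell\big)$ for the barycenter of the $(N+1)$-weight configuration obtained after $x_\ell$ branches (this is exactly the point appearing in the definition \eqref{e.selection} of $k(x,w,\ell)$), and using $\sum_{i=1}^N w_i x_i = N b_0$, I obtain the identity
\[
b = b_0 + \frac{x_\ell - b_0}{N+1}.
\]
Thus $b$ lies on the segment from $b_0$ to $x_\ell$, at distance $\frac{1}{N+1}|x_\ell - b_0|$ from $b_0$, and in particular
\[
|x_\ell - b| = \frac{N}{N+1}\,|x_\ell - b_0|.
\]

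Next I would bound $|x_j - b|$ from below for an arbitrary positive-weight index $j \in [N]\setminus\{\ell\}$ (such an index exists by hypothesis). By the triangle inequality and then the minimality of $\ell$ (which gives $|x_j - b_0| \ge |x_\ell - b_0|$, since $j$ has positive weight),
\[
|x_j - b| \ge |x_j - b_0| - \frac{|x_\ell - b_0|}{N+1} \ge |x_\ell - b_0| - \frac{|x_\ell - b_0|}{N+1} = |x_\ell - b|.
\]
To upgrade this to a strict inequality I would invoke the standing assumption that $x$ is unambiguous: the weights $f_i := w_i + \mathbbm{1}_{\{i=\ell\}}$ are non-negative integers with $\sum_{i=1}^N f_i = N+1$, and $b = \frac{1}{N+1}\sum_{i=1}^N f_i x_i$, so \eqref{e.defunam} yields $|x_j - b| \ne |x_\ell - b|$. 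Combined with the previous display, this gives $|x_j - b| > |x_\ell - b|$, exhibiting a positive-weight particle strictly farther from $b$ than $x_\ell$ is. Hence $x_\ell$ does not attain the maximum defining $k(x,w,\ell)$, i.e.\ $k(x,w,\ell) \ne \ell$, as claimed.

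I do not anticipate a genuine obstacle, as the computation is elementary once the identity for $b$ is in hand. The only point requiring care is the potential equality case: a priori $x_\ell$ could be tied with another particle for the farthest position from $b$, and it is precisely here that unambiguity of $x$ does the work (indeed, without it the conclusion can fail, e.g.\ when two positive-weight particles are equidistant from $b$).
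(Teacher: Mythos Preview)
Your proof is correct and in fact a bit cleaner than the paper's argument. The paper proceeds by normalizing coordinates so that $b_0=0$ and $x_\ell=(1,0,\ldots,0)$, then locates a specific positive-weight particle $x_j$ lying in the halfspace $\{p_1\le -w_\ell/(N-w_\ell)\}$ on the far side of the barycenter of the remaining particles; for that particular $j$ the shift $b_0\to b$ moves the barycenter strictly away from $x_j$ and strictly toward $x_\ell$, yielding $|x_\ell-b|<|x_\ell-b_0|\le |x_j-b_0|<|x_j-b|$ directly. Your route is coordinate-free: the single identity $b=b_0+(x_\ell-b_0)/(N+1)$ plus the reverse triangle inequality already gives $|x_j-b|\ge |x_\ell-b|$ for \emph{every} positive-weight $j\ne\ell$, and you then invoke unambiguity to break the possible tie. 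The paper's halfspace trick avoids that explicit appeal to unambiguity at the last step (strictness is geometric rather than definitional), whereas your argument trades that for a shorter computation and the stronger byproduct that $x_\ell$ is in fact the closest positive-weight particle to $b$, not merely not the farthest.
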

\begin{proof}
Since the killing rules for deterministic configurations are invariant under affine transformations, we may assume that $b_0=(0,0,\ldots,0)$ and $x_\ell=(1,0,\ldots,0)$. Writing 
\[
z=\frac{1}{N-w_\ell}\sum_{j \in [N]\setminus\{\ell\}} w_jx_j, 
\]
we have 
\[
b_0 = \frac{N-w_\ell}{N} z + \frac{w_\ell}{N} x_\ell, 
\]
so $z = (Nb_0-w_\ell x_\ell)/(N-w_\ell) = (-w_\ell/(N-w_\ell),0,\ldots,0)$.  
Since $z$ is the (weighted) barycenter of the particles excluding $x_\ell$, it follows that there is $j \ne \ell$ with $w_j > 0$ such that $x_j$ is an element of the halfspace $\mathcal{P}=\{(p_1,\ldots,p_d): p_1 \le -w_\ell/(N-w_\ell)\}$.

Now, similar to the proof of Claim~\ref{claim:safe_branching}, write 
\[
b= \frac{N-w_\ell}{N+1} z + \frac{w_\ell+1}{N+1} x_\ell
\]
for the barycenter of the configuration obtained from $x,w$ after letting $x_\ell$ branch but before any killing takes place. An easy calculation gives that $b=(1/(N+1),0,\ldots,0)$, so $|x_\ell-b| < |x_\ell-b_0|$. Moreover, for a particle $x_j \in \cP$ as in the previous paragraph, we have $|x_j-b| > |x_j-b_0|$. By our choice of $\ell$ we also have $|x_\ell-b_0| < |x_j-b_0|$, and thus $|x_\ell-b| < |x_j-b|$. Thus there is at least one particle further from $b$ than $x_\ell$, so $k(x,w,\ell)\ne \ell$. 
\end{proof}
We now conclude the proof as follows. 

\begin{proof}[Proof of Lemma~\ref{lem:regen_unambiguous}]
Fix any unambiguous configuration $x,w$ (recall \eqref{e.defunam}) with at least two particles with non-zero weights. 
For $b_{0}$ as in Claim \ref{claim:close_branching}, let $\ell = \argmin_{\ell\in [N]:w_\ell>0}\left|x_\ell-b_0\right|$. Let $w^{(0)}=w$, and for $j\geq 0$, $w^{(j+1)}=g(x, w^{(j)}, \ell)$, and $k_j=k(x,w^{(j)},\ell)$. By Claim \ref{claim:close_branching}, $k_0\not=\ell$. By Claim \ref{claim:safe_branching} and induction, for all $j\geq 0$ such that $w_{k_j}^{(j+1)}\geq 1$, we have $k(x, w^{(j+1)}, \ell)\not = \ell$, so $w_\ell^{(j+1)}=w_\ell^{(j)}+1$. Since the maximum possible weight is $N$, it follows that there is $j\leq N-1$ such that $w_{k_j}^{(j+1)}=0$. 


We have shown that, starting from any unambiguous configuration with at least two particles of non-zero weight, there exists a sequence of at most $N-1$ branching events which reduces the number of particles with nonzero weight by at least one. Repeating such a procedure at most $N-1$ times, we necessarily obtain a configuration with at most one nonzero weight. This completes the proof. 
\end{proof}

Given $\delta>0$, let $x\in \R^{d\times N}$ be a $\delta$-unambiguous configuration (as defined by \eqref{e.defdelta}) and introduce the notation
\begin{equation}\label{eq:point_neighbour_def}
\R^{d\times N}_{\delta/8, \:x}\coloneqq \left\{y\in \R^{d\times N}: y=(y_1, \cdots, y_N)\in \prod_{i=1}^NB\left(x_i, \frac{\delta}{8}\right)\right\},
\end{equation} \noindent which is the set of generic configurations within $\frac{\delta}{8}$ of $x$. The following corollary will be essential to the proof of Lemma~\ref{lem:lem10} in the next section (Section \ref{subsec:HarrisRec}).
\begin{cor}\label{c.sameSeq} 
Given any $\delta$-unambiguous configuration $x,w$, Lemma~\ref{lem:regen_unambiguous} assigns the same collapsing sequence of branching events to $y,w$ for all $\jl{y\in\R^{d\times N}_{\delta/8, \:x}}$.
\end{cor}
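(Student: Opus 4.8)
The plan is to fix the collapsing sequence $l_1,\dots,l_m$ that the construction in the proof of Lemma~\ref{lem:regen_unambiguous} produces for $x,w$, and to show that this \emph{same} sequence is valid for $y,w$ and collapses it, for every $y\in\R^{d\times N}_{\delta/8, \:x}$. The conceptual point is that the branching indices $l_1,\dots,l_m$ are the only ``choices'' the construction makes (via the argmin rule of Claim~\ref{claim:close_branching}); once they are fixed, the resulting evolution of the weight vectors is governed entirely by the killing rule $g(\cdot,\cdot,\cdot)$, i.e.\ by the $\argmax$ in \eqref{e.selection}. That $\argmax$ is exactly the type of quantity controlled by $\delta$-unambiguity. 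So I would run the construction once, on $x$, and then re-apply the fixed sequence to $y$, so that no stability of the argmin selection of $\ell$ is ever needed.

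Concretely, let $w^{(0)}=w,w^{(1)},\dots,w^{(m)}$ be the weight vectors induced by $l_1,\dots,l_m$ on $x$, and let $v^{(0)}=w,v^{(1)},\dots,v^{(m)}$ be those induced by the same sequence on $y$, so that $w^{(i)}=g(x,w^{(i-1)},l_i)$ and $v^{(i)}=g(y,v^{(i-1)},l_i)$. I would prove $v^{(i)}=w^{(i)}$ for all $i$ by induction, the base case being immediate. For the inductive step, assume $v^{(i-1)}=w^{(i-1)}$; then $\{j:v^{(i-1)}_j>0\}=\{j:w^{(i-1)}_j>0\}$, and $v^{(i-1)}_{l_i}=w^{(i-1)}_{l_i}>0$ since the sequence is valid for $x$, so it is valid for $y$ at this step as well. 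It then suffices to show $k(y,v^{(i-1)},l_i)=k(x,w^{(i-1)},l_i)$, as this forces $v^{(i)}=w^{(i)}$.

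The key (and only genuinely computational) step is this last equality, which is the stability of the killing $\argmax$. Setting $f_r=w^{(i-1)}_r+\mathbbm{1}_{\{r=l_i\}}$ gives non-negative integers with $\sum_r f_r=N+1$, and the two relevant barycenters are $B\coloneqq\frac1{N+1}\sum_r f_r x_r$ and $B^y\coloneqq\frac1{N+1}\sum_r f_r y_r$. Since $|x_r-y_r|<\delta/8$ for all $r$ and $\sum_r f_r=N+1$, one has $|B-B^y|<\delta/8$, hence $\big||x_j-B|-|y_j-B^y|\big|\le|x_j-y_j|+|B-B^y|<\delta/4$ for every $j$. Because $x$ is $\delta$-unambiguous, any two of the distances $|x_j-B|,|x_k-B|$ (with $w^{(i-1)}_j,w^{(i-1)}_k>0$) differ by more than $\delta$ by \eqref{e.defdelta}, so a perturbation of size at most $\delta/4$ of each value cannot change their order; thus the maximizer over the common positive-weight index set is unchanged. (This is a minor variant of the observation recorded just before Lemma~\ref{lem:unambiguous_fast}, applied with $\delta/8<\delta/4$ and with the $\argmax$ restricted to positive-weight indices.) This yields $k(y,v^{(i-1)},l_i)=k(x,w^{(i-1)},l_i)$, and completing the induction gives $v^{(m)}=w^{(m)}$, which has exactly one non-zero entry, so $l_1,\dots,l_m$ collapses $y,w$.

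I would stress the point that makes the argument go through cleanly: the separation of roles between the branching choices and the killing rule. The argmin selection of $\ell$ in Claim~\ref{claim:close_branching} uses the weight-$N$ barycenter $b_0=N^{-1}\sum_i w_i x_i$, which is \emph{not} of the form $\frac1{N+1}\sum_i f_i x_i$ controlled by \eqref{e.defdelta}, so its stability under the perturbation is not guaranteed; literally re-running the whole construction on $y$ would therefore require an extra (and possibly false) claim. The correct reading, which is also what is needed downstream in Lemma~\ref{lem:lem10}, is that the construction is run once on $x$ to produce the sequence, and the content of the corollary is that this single fixed sequence reproduces the identical weight evolution on every nearby $y$.
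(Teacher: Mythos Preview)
Your proof is correct and, in a meaningful sense, more careful than the paper's. The paper's three-line argument asserts that $y$ is $\delta/2$-unambiguous and that at every step the barycenter of $y,w$ lies within $\delta/8$ of that of $x,w$, and concludes that ``Lemma~\ref{lem:regen_unambiguous} assigns the same sequence to $y,w$ as to $x,w$''. Read literally, this would require that re-running the entire construction on $y$---including the $\argmin$ selection of $\ell$ in Claim~\ref{claim:close_branching}---produces the same choices. You correctly observe that this $\argmin$ is taken with respect to the weight-$N$ barycenter $b_0=N^{-1}\sum_i w_i x_i$, which is \emph{not} of the form $(N+1)^{-1}\sum_i f_i x_i$ governed by \eqref{e.defdelta}, so its stability is not furnished by $\delta$-unambiguity.

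Your route sidesteps this neatly: you extract the sequence $l_1,\dots,l_m$ from $x$ once, and then prove by induction that the killing rule $k(\cdot,\cdot,\cdot)$---which \emph{is} an $(N+1)$-weight $\argmax$ and hence is controlled by \eqref{e.defdelta}---returns identical outputs on $x$ and $y$, forcing $v^{(i)}=w^{(i)}$ throughout. The perturbation estimate $\big||x_j-B|-|y_j-B^y|\big|<\delta/4$ against the $\delta$-separation is exactly right. This establishes the reading of the corollary that is actually invoked in the proof of Lemma~\ref{lem:lem10}, where one fixes the sequence at time $\xi_\delta$ and then asks only that the same sequence collapse every nearby configuration encountered under $H_{\delta/8}$. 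So your interpretation and your argument are both on target; the paper's proof can be read as an abbreviated version of the same idea, but yours makes explicit which stability is being used and why the other is not needed.
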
 
\begin{proof} 
It is immediate from the definition of \jl{$\R^{d\times N}_{\delta/8, \:x}$} and the fact that $x$ is $\delta$-unambiguous that any element \jl{$y\in \R^{d\times N}_{\delta/8, \:x}$} is $\frac{\delta}{2}$-unambiguous. Moreover, $y_i\in \jl{B\left(x_i, \frac{\delta}{8}\right)}$ for each $i\in [N]$ so at any step in the proof of Lemma~\ref{lem:regen_unambiguous}, the barycenter of $y, w$ can only be at most \jl{$\frac{\delta}{8}$} away from the barycenter of $x, w$. Hence, Lemma~$\ref{lem:regen_unambiguous}$ assigns the same sequence to $y, w$ as to $x, w$. 
\end{proof} 
\subsection{Harris recurrence for the recentered BBB}\label{subsec:HarrisRec}
In this section we prove Lemma~\ref{lem:unambiguous_fast}, 
 Lemma~\ref{lem:lem10}, and finally Theorem~\ref{thm:HarrisRec}, which asserts the Harris recurrence of $\rX-\overline{\rX}$.  
 
 Recall that for $\delta>0$, we defined
$\xi_\delta \coloneqq \inf\{t\geq 0: X(t)\textnormal{ is }\delta\textnormal{-unambiguous}\}$, 
where the meaning of $\delta$-unambiguous is given by \eqref{e.defdelta}. 
Lemma~\ref{lem:unambiguous_fast} states that for $\delta$ small, the event $\{\xi_\delta<1\}$ occurs with probability as close to 1 as we wish, uniformly over starting configurations. 
\begin{proof}[Proof of Lemma~\ref{lem:unambiguous_fast}]
For each $\alpha\in (0,1)$, define the event
\[
D_\alpha\coloneqq \{\text{no branching occurs in }[0,\alpha]\}.
\]
For each $t\geq 0$ and $j\in [N]$, we define \[r_j(t)=|X_j(t)-\overline{X}(t)|.\] For any $x \in \R^{d \times N}$ and any $\delta>0$, we have 
\begin{align*}
\bP_x(\xi_\delta\geq 1|D_\alpha)&=\bP_x\left(\bigcap_{0\leq t<1}\left\{\min_{i\not = j\in [N]}|r_i(t)-r_j(t)|\leq \delta\right\}\Big|D_\alpha\right)\\&\leq \bP_x\left(\min_{i\not = j\in [N]}\left|r_i\left(\alpha\right)-r_j\left(\alpha\right)\right|\leq \delta \Big|D_\alpha\right) \\& \leq \sum_{i\not =j\in [N]}\bP_x\left(\left|r_i\left(\alpha\right)-r_j\left(\alpha\right)\right|\leq \delta\Big|D_\alpha\right)\\&\leq {N \choose 2} \max_{i\not= j\in [N]}\bP_x\left(\left|r_i\left(\alpha\right)-r_j\left(\alpha\right)\right|\leq \delta\Big|D_\alpha\right).
\end{align*}
Moreover, 
\begin{equation}\label{e.GaussianNormsBd}
\bP_x\left(\left|r_i\left(\alpha\right)-r_j\left(\alpha\right)\right|\leq \delta\Big|D_\alpha\right)=\bP_x\left(\left|g_{i,j}\left(X\left(\alpha\right)\right) -g_{j,i}\left(X\left(\alpha\right)\right) \right|\leq \delta \Big|D_\alpha\right),
\end{equation}
where 
\[
g_{i,j}\left(X\left(\alpha\right)\right)\coloneqq \left|\frac{N-1}{N}X_i\left(\alpha\right)-\frac{1}{N}X_j\left(\alpha\right)-\frac{1}{N}\sum_{k\in [N]\backslash\{i,j\}}X_k\left(\alpha\right)\right|. 
\]
is the distance between $X_i$ and $\overline{X}$ at time $\alpha$. Thus, by the tower law, we can write \eqref{e.GaussianNormsBd} as 
\[
\bE_x\left[\bP_x\left(\left|g_{i,j}\left(X\left(\alpha\right)\right)-g_{j,i}\left(X\left(\alpha\right)\right)\right|\leq \delta\Big|\sigma\left((X_k(s))_{k\in [N]\backslash\{i,j\}}:0\leq s\leq \alpha\right), D_\alpha\right)\Big|D_\alpha\right].
\]
Given $D_\alpha$, the particle positions at time $\alpha$ are independent Gaussians. So, as $\delta \downarrow 0$, the event in the conditional probability then requires that the norms of two independent Gaussians be arbitrarily close at time $\alpha$. It follows that
\begin{align}\label{e.term1Limit}
&\lim_{\delta\downarrow 0}\sup_{x\in \R^{d\times N}}\bP_x(\xi_\delta\geq 1|D_\alpha)\notag\\&=\lim_{\delta\downarrow 0}\sup_{x\in \R^{d\times N}}\max_{i\not= j\in [N]}\bP_x\left(\left|r_i\left(\alpha\right)-r_j\left(\alpha\right)\right|\leq \delta\Big|D_\alpha\right)\notag\\&=0.
\end{align}
Finally, we have 
\begin{align*}
\bP_x(\xi_\delta\geq 1)&=\bP_x(\xi_\delta\geq 1, D_\alpha)+\bP_x(\xi_\delta\geq 1, D_\alpha^c)\\&=e^{-\alpha N}\bP_x(\xi_\delta\geq 1|D_\alpha)+(1-e^{-\alpha N})\bP_x(\xi_\delta\geq 1|D_\alpha^c)\\&\leq e^{-\alpha N}\bP_x(\xi_\delta\geq 1|D_\alpha)+1-e^{-\alpha N}.
\end{align*}
Thus taking the supremum over $x\in \R^{d\times N}$, and subsequently the limits as $\delta\downarrow 0$ and as $\alpha\downarrow 0$, and using (\ref{e.term1Limit}), we deduce that 
\[
\lim_{\delta\downarrow 0}\sup_{x\in \R^{d\times N}}\bP_x(\xi_\delta\geq 1)=0.
\]
We conclude that for any $\epsilon>0$, there exists $\delta=\delta_\epsilon>0$ such that 
\[
\inf_{x\in \R^{d\times N}}\bP_x(\xi_\delta<1)>1-\epsilon.
\]
\end{proof}
We now turn to the proof of Lemma~\ref{lem:lem10}. Recall that the time-$t$ extent of the process is defined as 
\[ 
E(X(t))\coloneqq \max_{i\not =j\in [N]}|X_i(t)-X_j(t)| ; 
\] 
the lemma asserts that the extent reaches bounded values in a bounded time with high probability. For the proof we need one final definition. 
For any $t\geq s\geq 0$ and $\delta >0$, let 
\jl{ 
\begin{align*}
H_\delta(s,t]\coloneqq &
\left\{
\text{
For all $r \in (s,t]$ and $j \in [N]$, $X_j(r)$ has distance $<\delta$ from its time-$s$ ancestor
}
\right\}.
\end{align*}
We recall and emphasize that ancestral trajectories are continuous, and that the time-$s$ ancestor of $X_j(r)$ need not be $X_j(s)$.
}

{\color{cjl}
\begin{proof}[Proof of Lemma~\ref{lem:lem10}] 
First, for all $\lambda\geq1$, we have $T^{(2\lambda)}\leq T^{(2)}$. So, using the strong Markov property, we obtain for all $x\in \R^{d\times N}$
\[
\bP_x(T^{(2\lambda)}>2\lambda)\leq \bP_x(T^{(2\lambda)}>2\lfloor \lambda\rfloor) \leq \bP_x(T^{(2\lambda)}>2(\lfloor \lambda\rfloor-1))\left(\sup_{y\in \R^{d\times N}}\bP_y(T^{(2)}>2)\right).
\]
It follows by induction that 
\[
\bP_x(T^{(2\lambda)}>2\lambda)\leq \left(\sup_{y\in \R^{d\times N}}\bP_y(T^{(2)}>2)\right)^{\lfloor \lambda\rfloor},
\]
for all $x\in \R^{d\times N}$. Consequently, in order to conclude the proof of the lemma, it suffices to prove that
\begin{equation}\label{ineq:finiteExtent}
\sup_{x\in \R^{d\times N}}\bP_x(T^{(2)}>2)<1
\end{equation}

Next, fix $\epsilon\in (0,1)$. By Lemma~\ref{lem:unambiguous_fast}, there exists $\delta=\delta_\epsilon>0$ such that
\[
1-\epsilon<\inf_{x\in \R^{d\times N}}\bP_x(\xi_\delta<1). 
\] 
Since 
\[
\bP_x(T^{(2)}>2) \le \bP_x(T^{(2)}>2|\xi_\delta<1)\bP_x(\xi_\delta<1)+\bP_x(\xi_\delta \ge 1),
\]
to prove (\ref{ineq:finiteExtent}) it therefore suffices to show that 
\begin{equation}\label{ineq:finiteExtent2}
\sup_{x\in \R^{d\times N}} \bP_x(T^{(2)}>2|\xi_\delta < 1)<1\, .
\end{equation}

By Lemma~\ref{lem:regen_unambiguous}, there exists a deterministic sequence $(i_j)_{j=1}^m$ of $m \le (N-1)^2$ branching events that collapses the configuration $X(\xi_\delta), (1,1,\ldots, 1)$. For $0 \le s < t$ we define the the event 
\[
F(s,t]\coloneqq \{(i_j)_{j=1}^m\textnormal{ are the first $m$ branching events to occur in }(s,t]\}.
\] 
If $\xi_\delta < 1$, and $F(\xi_\delta,\xi_\delta+1]$ occurs and also $H_{\delta/8}(\xi_\delta,\xi_\delta+1]$ occurs, then $T^{(2)} \le \xi_\delta+1\le 2$ by Corollary~\ref{c.sameSeq}. Therefore, for all $x \in \R^{d \times N}$, 
\begin{align*}
\bP_x(T^{(2)}\le2|\xi_\delta < 1)
& \ge 
\bP_x(F(\xi_\delta,\xi_\delta+1] \cap H_{\delta/8}(\xi_\delta,\xi_\delta+1]|\xi_\delta < 1) \\
& \ge 
\inf_{y \in \R^{d \times N}}
\bP_y(F(0,1]\cap H_{\delta/8}(0,1])\, ,
\end{align*}
where at the last step we have invoked the strong Markov property at time $\xi_\delta$. The event in the final probability simply requires a fixed, bounded set of branching events to occur in order, and for particles to stay within a fixed distance of their time-zero ancestor for a fixed time, so clearly has positive probability (which in fact does not depend on the starting configuration $y$). This establishes (\ref{ineq:finiteExtent2}) and completes the proof. 
\end{proof}}

We will now prove Theorem \ref{thm:HarrisRec} using Lemma~\ref{lem:lem10}. We use $\Phi$ to denote the recentered BBB process, so that $\Phi_t\coloneqq X(t)-\overline{X}(t)$ for all $t\geq 0$. For each $t>0$ and $x\in \R^{d\times N}$, let $\mu_{t,x}\coloneqq \bP_x\circ\Phi_t^{-1}$ be the law of $\Phi_t$ given that $\Phi_0=x-\overline{x}$. Denote by $\varphi$ the $N$-fold product measure of a $d$-dimensional standard Gaussian and recall the notation 
\[
C_L\coloneqq \{x\in \R^{d\times N}:E(x)\leq L\}
\]
We begin with the following key estimate. 
\begin{lem}\label{lem:HarrisKeyPt}
For any $L\geq 0$, there exists $\gamma=\gamma(L,N)>0$ such that
\begin{equation}\label{e.lemharris}
\inf_{x\in C_L}\inf_{1\leq t\leq 2}\mu_{t,x}(A)\geq \gamma \varphi(A), \quad\text{for all Borel }A\subset \R^{d\times N}
\end{equation}
\end{lem}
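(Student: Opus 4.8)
The plan is to extract the whole minorization \eqref{e.lemharris} from the single, simplest scenario in which no branching occurs on $[0,t]$. On that event the recentered configuration $\Phi_t$ is an explicit nondegenerate Gaussian supported on the hyperplane of centered configurations, and the lemma then reduces to a comparison of Gaussian densities. The probability of the no-branching event is bounded below uniformly in $x$ and $t\in[1,2]$, so it contributes only a harmless constant factor.

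Concretely, I would first restrict to the event $D_t:=\{\text{no branching in }[0,t]\}$, which has $\bP_x(D_t)=e^{-Nt}\ge e^{-2N}$ for every $x\in\R^{d\times N}$ and every $t\le 2$, since branchings form a rate-$N$ Poisson process (first construction of Section~\ref{sub:formalism}). On $D_t$ the $N$ particles evolve as independent Brownian motions, so $X(t)=x+G$ with $G\sim\cN(0,t\,I_{dN})$. Writing $\Pi y:=(y_i-\overline{y})_{i\in[N]}$ for the orthogonal centering projection onto $V:=\{v\in\R^{d\times N}:\sum_i v_i=0\}$, we have $\Phi_t=\Pi X(t)=\Pi x+\Pi G$. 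Thus, conditionally on $D_t$, the random vector $\Phi_t$ is Gaussian, supported on $V$, with mean $\Pi x$ and covariance $t\,\Pi$; restricted to $V$ this covariance is $t$ times the identity. Since $x\in C_L$, each coordinate of the mean obeys $|(\Pi x)_i|=|x_i-\overline{x}|\le E(x)\le L$, so $\Pi x$ ranges over a fixed compact subset of $V$ (in particular $|\Pi x|\le \sqrt{N}\,L$) as $x$ ranges over $C_L$.

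The crux is the uniform Gaussian density comparison. Let $f_{t,x}$ denote the density on $V$ of $\cN(\Pi x,\,t\,\mathrm{Id}_V)$ and $g$ the density on $V$ of the reference Gaussian (the trace of $\varphi$ on $V$). Their ratio equals a constant times $\exp\big(\tfrac{|v|^2}{2}-\tfrac{|v-\Pi x|^2}{2t}\big)$. Because the covariance $t\,\mathrm{Id}_V$ dominates the reference covariance for $t\ge 1$, the quadratic part of this exponent is nonnegative; combined with $|\Pi x|$ bounded by a constant depending only on $L,N$, this yields a uniform lower bound $f_{t,x}(v)/g(v)\ge \gamma_0=\gamma_0(L,N)>0$ over all $v\in V$, $x\in C_L$, and $t\in[1,2]$. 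Putting the two steps together,
\[
\mu_{t,x}(A)\ \ge\ \bP_x(D_t)\,\bP_x(\Phi_t\in A\mid D_t)\ \ge\ e^{-2N}\int_{A}f_{t,x}(v)\,dv\ \ge\ e^{-2N}\gamma_0\int_{A}g(v)\,dv,
\]
and the final integral is $\varphi(A)$ once $\varphi$ is identified with its trace on $V$ (where $\Phi_t$ actually lives); this gives \eqref{e.lemharris} with $\gamma=e^{-2N}\gamma_0$.

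The main obstacle is making the density comparison uniform: the lower bound $f_{t,x}/g\ge\gamma_0$ degrades as the variance $t$ approaches the reference variance from above, so one must exploit that $t\ge 1$ keeps the variance bounded away from below (equivalently, one chooses the reference Gaussian no wider than the conditional law), while $C_L$ confines the Gaussian mean $\Pi x$ to a compact set so that the shift contributes only a bounded factor. A secondary point to handle with care is the support issue flagged above: $\Phi_t$ is always confined to the hyperplane $V$, so the comparison must be set up against the Gaussian law of $\Pi$ applied to a $\varphi$-distributed vector rather than against $\varphi$ itself.
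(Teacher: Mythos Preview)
Your overall strategy is the same as the paper's: restrict to the no-branching event (probability $\ge e^{-2N}$ uniformly), then do a Gaussian density comparison. You are in fact more careful than the paper on one point: the paper writes down the product density $\prod_i \cN(x_i-\overline{x},\,tI_d)$ as if it were the conditional law of $\Phi_t=X(t)-\overline{X}(t)$, but that is the law of $X(t)-\overline{x}$ (deterministic initial barycenter), not of $\Phi_t$, which, as you correctly observe, is supported on the hyperplane $V=\{\sum_i v_i=0\}$. Working on $V$ against $\Pi_*\varphi$ is the right way to make the argument honest, and it is all that is needed downstream for Harris recurrence.

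There is, however, a genuine gap in your density comparison. You assert a uniform bound $f_{t,x}(v)/g(v)\ge\gamma_0>0$ over $v\in V$, $x\in C_L$, $t\in[1,2]$, on the grounds that the quadratic part of
\[
\frac{|v|^2}{2}-\frac{|v-\Pi x|^2}{2t}
=\frac{1}{2}\Big(1-\tfrac{1}{t}\Big)|v|^2+\frac{v\cdot\Pi x}{t}-\frac{|\Pi x|^2}{2t}
\]
is nonnegative for $t\ge1$. That is true, but at the endpoint $t=1$ the quadratic part vanishes identically, leaving the linear term $v\cdot\Pi x-\tfrac12|\Pi x|^2$, which tends to $-\infty$ along $v=-s\,\Pi x$ whenever $\Pi x\ne0$. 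So with the reference covariance equal to $\mathrm{Id}_V$ there is no uniform $\gamma_0$. Your own parenthetical (``one chooses the reference Gaussian no wider than the conditional law'') contains the fix, but you must choose it \emph{strictly} narrower: take the reference on $V$ to have covariance $\tfrac12\,\mathrm{Id}_V$, say. Then the quadratic coefficient is at least $\tfrac12(2-\tfrac{1}{t})\ge\tfrac34$ uniformly for $t\in[1,2]$, the infimum over $v$ is finite, and it is controlled by $|\Pi x|\le \sqrt{N}L$. This is exactly what the paper achieves, somewhat opaquely, via the crude inequality $|y-a|^2\le 2|y|^2+2|a|^2$, which replaces $e^{-|y|^2/2}$ by $e^{-|y|^2}$ and thereby halves the reference variance.
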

\begin{proof}
Let $L\geq 0$. Define  
\[
D\coloneqq \{\text{no branching events occur in }[0,2]\}.
\] By monotonicity and independence, for any Borel $A \subset \R^{d\times N}$, we obtain 
\begin{align*}
\mu_{t,x}(A)&\geq \mu_{t,x}(A\cap D)\\&=\mu_{t,x}(A|D)\mu_{t,x}(D)\\&= e^{-2N}\int_{A} \prod_{i=1}^N \frac{e^{-\frac{|y_i-(x_i-\overline{x})|^2}{2t}}}{(2\pi t)^{d/2}}dy_1\cdots dy_N.
\end{align*} Moreover, for each $x\in C_L$, $y\in \R^{d\times N}$, and $t\in [1,2]$,
\jl{
\[
\frac{e^{-\frac{|y_i-(x_i-\overline{x})|^2}{2t}}}{(2\pi t)^{d/2}}
\geq \frac{e^{-\frac{|y_i|^2}{t}}e^{-\frac{|x_i-\overline{x}|^2}{t}}}{(2\pi t)^{d/2}}
\geq \frac{e^{-L^2}}{2^{d/2}}\frac{e^{-|y_i|^2}}{(2\pi)^{d/2}},
\] }
which implies that for each $x\in C_L$ and $t\in [1,2]$,
\jl{
\begin{align*}
e^{-2N}\int_{A} \prod_{i=1}^N \frac{e^{-\frac{|y_i-(x_i-\overline{x})|^2}{2t}}}{(2\pi t)^{d/2}}dy_1\cdots dy_N&\geq e^{-2N}\frac{e^{-NL^2}}{2^{dN/2}} \frac{1}{(2\pi)^{dN/2}}\int_A \prod_{i=1}^N e^{-|y_i|^2}dy_1\cdots dy_N \\&= e^{-2N} e^{-\frac{NL^2}{2}} \varphi(A).
\end{align*} }
Thus \eqref{e.lemharris} holds if we take \jl{$\gamma(L,N)\coloneqq e^{-2N}e^{-\frac{NL^2}{2}} >0$.}
\end{proof}
\begin{proof}[Proof of Theorem \ref{thm:HarrisRec}]
We wish to show that $\Phi$ is Harris recurrent. According to Definition \ref{dfn:HarrisRec}, it suffices to show that for the same choice of $\varphi$ as above, we have for all $A\subset \R^{d\times N}$ Borel, 
\[
\varphi(A)>0\Rightarrow \inf_{x\in \R^{d\times N}}\bP_x(\eta_A=\infty)=1,
\] where $\eta_A=\int_0^\infty \mathbbm{1}_{\{\Phi_t\in A\}}dt$. By Lemma~\ref{lem:lem10} and Remark \ref{r.HarrisRec} (i), the stopping times $(T_i; i\geq 1)$ defined by (\ref{eq:ti_def}) are finite $\bP_x$-almost surely. Fix $L\geq 0$ and any Borel set $A\subset \R^{d\times N}$. For each integer $i\geq 1$, define 
\[
\eta_A^i\coloneqq \int_{T_i+1}^{T_i+2}\mathbbm{1}_{\{\Phi_t\in A\}}dt.
\]
For all $i\geq 1$, the strong Markov property, Tonelli's Theorem and Lemma~\ref{lem:HarrisKeyPt} imply that 
\begin{align*}
\bE_x[\eta_A^i]&=\bE_x\left[\bE_{X(T_i)}\left[\eta_A^1\right]\right]\\&=\bE_x\left[\int_1^2\bP_{X(T_i)}(\Phi_t\in A)dt\right]\\&=\bE_x\left[\int_1^2\mu_{t,X(T_i)}(A)dt\right]\\&\geq \gamma \varphi(A).
\end{align*}
Let $\theta\coloneqq \gamma\varphi(A)\in (0,1)$, since $\gamma \in (0,1)$ and $\varphi$ is a probability measure. \jl{ Since $\eta_A^i\leq 1$ for each $i\geq 1$, it follows that
\begin{align*}
\theta&\leq \bE_x[\eta_A^i]\\&=\bE_x[\eta_A^i\mathbbm{1}_{\{\eta_A^i\leq \theta^2\}}]+\bE_x[\eta_A^i\mathbbm{1}_{\{\eta_A^i> \theta^2\}}]\\&\leq \theta^2\bP_x(\eta_A^i\leq \theta^2)+\bP_x(\eta_A^i>\theta^2)\\&=\theta^2+(1-\theta^2)\bP_x(\eta_A^i>\theta^2), 
\end{align*} 
for all $i\geq 1$}. Rearranging the above inequality, we now have 
\begin{equation}\label{e.PositiveProb}
\bP_x(\eta_A^i>\theta^2)\geq \frac{\theta-\theta^2}{1-\theta^2}=\frac{\theta}{1+\theta}>0, \;\;\;\forall i\geq 1.
\end{equation} 
Moreover, the same proof shows that almost surely 
\[
\bP_x(\eta_A^i>\theta^2\;|\; \cF_{T_{i}}) \ge 
\frac{\theta}{1+\theta}
\]
for all $i \ge 1$. Since the random variables $(\eta_A^j,1 \le j < i)$ are $\cF_{T_{i}}$-measurable, it follows that almost surely 
\[
\bP_x(\eta_A^i>\theta^2\;|\; \eta_A^j,1 \le j < i) \ge 
\frac{\theta}{1+\theta}.
\]
This implies that the collection of indicators $(\mathbbm{1}_{\{\eta_A^i>\theta^2\}};i \ge 1)$
stochastically dominates a sequence $(B_i;i\geq 1)$ of independent Bernoulli random variables with success probability $\frac{\theta}{1+\theta}$. 
It is immediate that, almost surely, $\eta_A^i>\theta^2$ for infinitely many $i$, and so 
\[
\eta_A \ge \sum_{i \ge 1} \eta_A^i \aseq \infty\, ,
\]
as required. \qedhere
\end{proof}
\noindent {\bf Remark.} It might seem that the random variables $(\eta_A^i;i \ge 1)$ ought in fact to be IID. If this were so, the preceding proof would simply consist of an application of the strong law of large numbers; but the situation is more subtle. As discussed in Remark~\ref{r:dependencies}, the law of $X(T_{i+1})-\overline{X}(T_{i+1})$ depends on the duration $T_{i+1}-T_i$, and similar issues create dependencies between the random variables $(\eta_A^i;i \ge 1)$.

\section{Open questions}\label{sec:conclusion}
\begin{itemize}
\item It would be nice to understand the behavior of the rescaling $\sigma(d,N)$ obtained in Theorem \ref{thm:main}, especially as $N\to\infty$. We have shown that $\sigma(d,1)=\sigma(d,2)=1$ for all $d\geq 1$, and that in general, $\sigma(d, N)$ is related to the quantity $\Sigma(d,N)=\bE_x[\tau_2-\tau_1]^{-1/2}Q$, where $Q$ is a $d$-by-$d$ matrix such that $C=QQ^T$ is the covariance matrix of $\Delta_1=X_1(\tau_2)-X_1(\tau_1)$. Thus, one way to gain information about $\sigma(d,N)$ would be to understand the relations between the coordinates of $\Delta_1$. This approach seems rather difficult to implement given the abstract nature of $\Delta_1$. 
However, it seems likely that 
\begin{equation}\label{e.limSigma}
\lim_{N\to\infty}\sigma(d,N)=0,
\end{equation} 
for all dimensions $d\geq 1$. 

\item For Borel $A \subset \R^d$, write 
\[
\pi_t^N(A)\coloneqq \frac{1}{N}\#\{\{X_1(t)-\overline{X}(t), \cdots, X_N(t)-\overline{X}(t)\}\cap A\}
\] 
for the empirical measure of the BBB process viewed from its barycentre. We expect that $\pi_t^N$ converges weakly as first $t \to \infty$, then $N \to \infty$, to a continuous Borel measure with compact support. It seems likely that the limiting measure is the one identified as the hydrodynamic limit of the branching particle systems studied in \cite{2020berestycki, 2020berestycki2}, where the authors consider $N$-BBMs with killing of the particle furthest from the origin. 
\end{itemize}
\section{Appendix}\label{sec:append}
Given two functions $f,g:\R\to \R$ with $g\not = 0$, we write $f\overset{m\to\infty}{\sim}g$ if $\frac{f(m)}{g(m)}=1+o(1)$ as $m\to\infty$. Moreover, we define the set 
\[
\Lambda\coloneqq \{\lambda :[0,1]\to[0,1] : \lambda \text{ is a strictly increasing, continuous bijection}\},
\]
and subsequently the Skorohod metric on $\cD([0,1], \R^d)$ by 
\[
d(x,y)\coloneqq \inf_{\lambda \in \Lambda}\{\max\left(d_\infty(\lambda , I), d_\infty(x, y\circ \lambda)\right)\},
\] where $I\in \Lambda$ is the identity map on $[0,1]$, and $d_\infty$ is the supremum metric defined by 
\[
d_\infty(f, g)\coloneqq \sup_{0\leq t\leq 1}|f(t)-g(t)|
\] for functions $f$ and $g$ on $[0,1]$. In particular, we observe that for all $x,y\in \cD([0,1], \R^d)$
\begin{equation}\label{e.metricOrder}
d(x,y)\leq d_\infty(x,y)
\end{equation} by taking $\lambda =I\in \Lambda$. We can now recall the multidimensional version of Donsker's invariance principle. 
\begin{thm}\emph{(Donsker's Theorem, Theorem 4.3.5 in \cite{MR1876437})}\label{Donsker}
Let $(\Delta_i;i\geq 1)$ be a sequence of independent and identically distributed $\R^d$-valued random variables with mean $\beta$ and $d$-by-$d$ covariance matrix $C=QQ^T$, for some matrix $Q$. Define the partial sums $S(j)\coloneqq \sum_{i=1}^{j}\Delta_i$. Then, we have 
\[
 \left(m^{-1/2}\left(S(\lfloor tm\rfloor)-  tm\beta\right),0\leq t\leq 1\right)\overset{d}{\to} (QB(t),0\leq t\leq 1)
 \]
 in the Skorohod topology on $\cD([0,1], \R^d)$ as $m\to\infty$, where $\rB=(B(t),0\leq t\leq 1)$ is a standard $d$-dimensional Brownian motion. 

\end{thm}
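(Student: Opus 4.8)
The plan is to prove the functional central limit theorem by establishing the two ingredients that, via Prokhorov's theorem, together force weak convergence in $\cD([0,1],\R^d)$: convergence of the finite-dimensional distributions to those of $(QB(t),0\le t\le1)$, and tightness of the laws of the rescaled partial-sum processes. First I would reduce to the centered case. Writing $\Delta_i'=\Delta_i-\beta$ and $S'(j)=\sum_{i=1}^j\Delta_i'$, one has $m^{-1/2}(S(\lfloor tm\rfloor)-tm\beta)=m^{-1/2}S'(\lfloor tm\rfloor)+m^{-1/2}(\lfloor tm\rfloor-tm)\beta$, and the last term is at most $m^{-1/2}|\beta|$ uniformly in $t\in[0,1]$, hence tends to $0$ uniformly. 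So without loss of generality $\beta=0$, and I study $S_m\coloneqq(m^{-1/2}S(\lfloor tm\rfloor),0\le t\le1)$, whose target limit $QB$ is centered Gaussian with $QB(t)\eqdist \cN(0,tQQ^T)=\cN(0,tC)$.

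For the finite-dimensional distributions, fix $0=t_0<t_1<\dots<t_k\le1$. The increments $S_m(t_j)-S_m(t_{j-1})$ are built from disjoint, hence independent, blocks of the $\Delta_i$, each being $m^{-1/2}$ times a sum of $\approx(t_j-t_{j-1})m$ IID mean-zero vectors of covariance $C$. By the L\'evy continuity theorem it suffices to identify the limiting characteristic function. Writing $\phi$ for the characteristic function of $\Delta_1$ and using the second-moment expansion $\phi(u)=1-\tfrac12 u^{T}Cu+o(|u|^{2})$, a single increment satisfies $\e[\exp(i\langle\theta,S_m(t_j)-S_m(t_{j-1})\rangle)]=\phi(m^{-1/2}\theta)^{\lfloor t_jm\rfloor-\lfloor t_{j-1}m\rfloor}\to\exp(-\tfrac12(t_j-t_{j-1})\theta^{T}C\theta)$. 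Independence of the blocks makes the joint characteristic function factorize, so $(S_m(t_1),\dots,S_m(t_k))$ converges in law to a centered Gaussian vector with independent increments of covariance $(t_j-t_{j-1})C$, which is exactly the law of $(QB(t_1),\dots,QB(t_k))$.

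For tightness I would exploit that the jumps of $S_m$ have size $m^{-1/2}|\Delta_i|$, which is asymptotically negligible, so all subsequential limits are supported on continuous paths and it suffices to prove $C$-tightness. I would argue coordinatewise: the $\ell$-th coordinate $S_m^{(\ell)}$ is the rescaled partial-sum process of the IID, mean-zero, finite-variance scalar sequence $(\Delta_i^{(\ell)})_{i\ge1}$, which is $C$-tight in $\cD([0,1],\R)$ by the one-dimensional invariance principle. Since $C$-tightness of each coordinate implies $C$-tightness of the $\R^d$-valued process — the uniform modulus of the vector is dominated by the maximum of the coordinate moduli, and on continuous limit points the $J_1$-Skorohod and uniform topologies coincide, which removes the joint time-change obstruction — the family $(S_m)_{m\ge1}$ is tight. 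Combining finite-dimensional convergence with tightness and Prokhorov's theorem, every subsequence has a weakly convergent further subsequence whose limit is pinned down as $QB$ by the finite-dimensional distributions, so the full sequence converges.

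The main obstacle is the tightness, and within it the genuinely delicate point is that only second moments of $\Delta_1$ are assumed. The scalar $C$-tightness cannot be read off from a crude moment bound: Kolmogorov's maximal inequality $\p{\max_{j\le n}|S^{(\ell)}(j)|\ge\lambda}\le n\,\V{\Delta_1^{(\ell)}}/\lambda^{2}$ yields stochastic boundedness, but after partitioning $[0,1]$ into $\lceil1/\delta\rceil$ blocks it gives only an $O(1)$ bound on $\p{w(S_m^{(\ell)},\delta)\ge\epsilon}$ rather than a vanishing one (the $1/\delta$ block count cancels the $\delta$ from block length). Overcoming this requires either a truncation/Lindeberg refinement of the modulus estimate, or, more cleanly, a Skorohod embedding $S^{(\ell)}(j)=W(\tau_j)$ of each scalar walk into a Brownian motion $W$, after which the strong law $\tau_{\lfloor tm\rfloor}/m\to t\,\V{\Delta_1^{(\ell)}}$ together with Brownian scaling and the uniform continuity of $W$ on compacts yields uniform convergence, hence $C$-tightness, directly.
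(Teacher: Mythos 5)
This statement is not proved in the paper at all: it is quoted verbatim as an external result (Theorem 4.3.5 in Whitt's \emph{Stochastic-Process Limits}, \cite{MR1876437}) and used as a black box in the Appendix, so there is no internal proof to compare against. Your blind proof is essentially correct and follows the standard textbook route. The reduction to $\beta=0$ is right (the error term $m^{-1/2}(\lfloor tm\rfloor-tm)\beta$ is uniformly $O(m^{-1/2})$); the finite-dimensional convergence via independence of increment blocks and the second-order expansion $\phi(u)=1-\tfrac12 u^{T}Cu+o(|u|^{2})$ of the characteristic function is the usual multivariate Lindeberg--L\'evy argument; and your tightness strategy is sound, including the two points where care is genuinely needed. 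First, you correctly observe that coordinatewise $C$-tightness upgrades to joint tightness in $\cD([0,1],\R^d)$ precisely because continuity of the limit points removes the common-time-change obstruction that makes $\cD([0,1],\R^d)$ differ from the product of the coordinate Skorohod spaces. Second, you correctly diagnose that under only finite second moments the naive Kolmogorov-maximal-inequality block argument gives an $O(1)$ rather than vanishing modulus bound, and the Skorohod embedding fix (with $\tau_{\lfloor tm\rfloor}/m\to t\,\V{\Delta_1^{(\ell)}}$ uniformly by monotonicity, plus Brownian scaling and uniform continuity on compacts) is a classical and complete way to get scalar $C$-tightness --- indeed it gives the full one-dimensional invariance principle. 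One cosmetic remark: your opening observation that the maximal jump $m^{-1/2}\max_{i\le m}|\Delta_i|$ is negligible is not needed as stated (it presupposes tightness to speak of subsequential limits); it can simply be dropped, since your coordinatewise argument delivers $C$-tightness directly. Compared with the paper, which buys this theorem by citation, your approach costs a page of standard analysis but makes the Appendix self-contained.
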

\subsection{Proof of Proposition \ref{prop:prop2}} 
Let $x\in \R^{d\times N}$ denote the initial configuration of $\rX$, i.e. $X(0)=x$. The main idea will be to approximate $\overline{\rX}$ by a sum of IID random variables, for which we may apply Donsker’s invariance principle. The approximation is justified by properties (1)-(5) in Proposition \ref{prop:prop2}. Throughout the appendix, we denote by $\rB=(B(t),t\geq 0)$ a standard $d$-dimensional Brownian motion and we fix $\mathfrak{m}\coloneqq \bE_x[\tau_2-\tau_1]$. \\ 
We first define the appropriate sum of IID random variables which automatically satisfies an invariance principle.
\begin{lem}\label{lem:lem13}
Consider the sequence of sums of increments $S(j)\coloneqq \sum_{i=1}^{j}\Delta_i$, where $\Delta_i=X_1(\tau_{i+1})-X_1(\tau_i)$. There exist $\beta\in \R^d$ and a $d$-by-$d$ matrix $Q$ such that \[\left(m^{-1/2}\left(S(\lfloor tm\rfloor)- tm\beta\right),0\leq t\leq 1\right)\overset{d}{\to} (QB(t),0\leq t\leq 1)\] in the Skorokod topology on $\cD([0,1], \R^d)$ as $m\to\infty$.
\end{lem}
\begin{proof}
By property (2), the random variables $(\Delta_i; i\geq 1)$ are IID. Moreover, by property (4), they belong to $L^2(\bP_x)$. Letting $C=QQ^T$ be the covariance matrix of $\Delta_1$ and taking $\beta=\bE_x[\Delta_1]$, the lemma then follows from Donsker's invariance principle (Theorem \ref{Donsker}).
\end{proof}

For each $s\geq \tau_1$, let $k[s]$ be the unique positive integer such that $\tau_{k[s]} \leq s< \tau_{k[s]+1}$ and for $s < \tau_1$ let $k[s]=0$. Note that each $k[s]$ is a random variable. We next control the errors in probability between an appropriate rescaling of $\rX_1$ and $S$.
\begin{lem}\label{lem:lem14}
Fix any initial configuration $x\in \R^{d\times N}$. We have
\[d_\infty\left(\frac{X_1(\cdot m)}{\sqrt{m}},\frac{S(k[\cdot m])}{\sqrt{m}}\right)\to 0\] in probability with respect to $\bP_x$ as $m\to\infty$.
\end{lem}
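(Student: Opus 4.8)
The plan is to exploit a telescoping identity to rewrite the gap $X_1(tm)-S(k[tm])$ as a displacement of $X_1$ \emph{within a single regeneration cycle}, plus a lower-order term, and then to control this displacement uniformly in $t$ by invoking Fact~\ref{l.convProb}.

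First I would record that, since $\Delta_i=X_1(\tau_{i+1})-X_1(\tau_i)$, the partial sums telescope: $S(j)=X_1(\tau_{j+1})-X_1(\tau_1)$ for $j\ge 1$, and $S(0)=0$. Consequently, for any $t$ with $tm\ge\tau_1$ (equivalently $k[tm]\ge 1$) one has
\[
X_1(tm)-S(k[tm]) = \big(X_1(tm)-X_1(\tau_{k[tm]+1})\big)+X_1(\tau_1),
\]
whereas for $tm<\tau_1$ the gap is just $X_1(tm)$. The point is that $tm$ and $\tau_{k[tm]+1}$ both lie in $[\tau_{k[tm]},\tau_{k[tm]+1}]$, so the bracketed term is a fluctuation over one cycle, while $X_1(\tau_1)/\sqrt m$ is harmless.

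Next I would set $V_i\coloneqq \sup_{\tau_i\le s\le\tau_{i+1}}|X_1(s)-X_1(\tau_i)|$ and $M_0\coloneqq\sup_{0\le s\le\tau_1}|X_1(s)|$. The triangle inequality gives $|X_1(tm)-X_1(\tau_{k[tm]+1})|\le 2V_{k[tm]}$; moreover $\tau_1<\infty$ $\bP_x$-a.s.\ by property~(1), and since $\rX$ is c\`adl\`ag this forces $M_0<\infty$ a.s.\ and $|X_1(\tau_1)|\le M_0$. Taking the supremum over $t\in[0,1]$ and dividing by $\sqrt m$, I obtain the uniform estimate
\[
d_\infty\!\left(\frac{X_1(\cdot\,m)}{\sqrt m},\,\frac{S(k[\cdot\,m])}{\sqrt m}\right)
\le \frac{M_0}{\sqrt m}+\frac{2}{\sqrt m}\max_{1\le i\le k[m]}V_i,
\]
where the range of $i$ reflects that $k[tm]$ runs over $\{1,\dots,k[m]\}$ as $tm$ ranges over $[\tau_1,m]$.

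Finally I would send each term to $0$ in probability. The first term tends to $0$ a.s.\ since $M_0<\infty$ a.s. For the second, property~(3) makes the $V_i$ identically distributed and property~(4) gives $\bE_x[V_1^2]<\infty$; also, property~(1) together with the strong law of large numbers yields $\tau_n/n\convas\mathfrak m$, whence the standard renewal argument gives $m^{-1}k[m]\convas 1/\mathfrak m$. Applying Fact~\ref{l.convProb} with $Z_i=V_i$, $\kappa=1/\mathfrak m$, and $k_m=k[m]+1$ (so that $\max_{1\le i<k_m}=\max_{1\le i\le k[m]}$) then shows $m^{-1/2}\max_{1\le i\le k[m]}V_i\to 0$ in probability, completing the argument. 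I do not anticipate a genuine obstacle: the only delicate points are the exact bookkeeping in the telescoping identity---in particular that $S(k[tm])$ records the value at the regeneration time $\tau_{k[tm]+1}$ \emph{just after} $tm$---and the off-by-one adjustment needed to match the hypothesis of Fact~\ref{l.convProb}.
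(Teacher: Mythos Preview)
Your proposal is correct and follows essentially the same approach as the paper: you telescope $S(k[tm])$ to $X_1(\tau_{k[tm]+1})-X_1(\tau_1)$, bound the resulting within-cycle displacement by the variables $V_i=\sup_{\tau_i\le s\le\tau_{i+1}}|X_1(s)-X_1(\tau_i)|$, and invoke Fact~\ref{l.convProb} using properties (1), (3), (4). The only cosmetic differences are that you carry the constant $2$ from the triangle inequality explicitly and use $M_0=\sup_{0\le s\le\tau_1}|X_1(s)|$ to cover the pre-$\tau_1$ regime, whereas the paper first restricts to $t\ge\tau_1$ and uses $|X_1(\tau_1)|$; neither affects the argument.
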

\begin{proof}
Since $\tau_1<\infty$ almost surely with respect to $\bP_x$ and $X$ is right-continuous, it suffices to prove that as $m\to\infty$
\[
m^{-1/2}\sup_{\tau_1\leq t\leq m}\left|X_1(t)-S(k[t])\right|\to 0
\]
in probability with respect to $\bP_x$. We have
\begin{align*}
    X_1(t)-S(k[t])&=X_1(t)-\sum_{i=1}^{k[t]}(X_1(\tau_{i+1})-X_1(\tau_i))\\&=X_1(t)-X_1(\tau_{k[t]+1})+X_1(\tau_1)
\end{align*} Therefore, 
\begin{align*} 
\sup_{\tau_1\leq t\leq m}|X_1(t)-S(k[t])|&=\sup_{\tau_1\leq t\leq m}|X_1(t)-X_1(\tau_{k[t]+1})+X_1(\tau_1)|\\&\leq \max_{1\leq i\leq k[m]}\sup_{\tau_i\leq s\leq \tau_{i+1}}|X_1(s)-X_1(\tau_i)|+|X_1(\tau_1)|
\end{align*} almost surely.
Fix any $\epsilon>0$. It follows that 
\begin{align} 
    &\bP_x\left(\sup_{\tau_1\leq t\leq m}m^{-1/2}|X_1(t)-S(k[t])|>\epsilon\right)\notag\\&\leq\bP_x\left(\max_{1\leq i\leq k[m]}\sup_{\tau_i\leq s\leq \tau_{i+1}}m^{-1/2}|X_1(s)-X_1(\tau_i)|>\frac{\epsilon}{2}\right)+\bP_x\left(m^{-1/2}|X_1(\tau_1)|>\frac{\epsilon}{2}\right)\label{e.splitProb}
    \end{align} 
Since $\tau_1<\infty$ almost surely with respect to $\bP_x$ and $X$ is right-continuous, then $|X_1(\tau_1)|<\infty$ almost surely with respect to $\bP_x$. Therefore, the second probability in \eqref{e.splitProb} converges to $0$ as $m\to\infty$. Consider the random variables \[Z_i\coloneqq \sup_{\tau_i\leq s\leq \tau_{i+1}}|X_1(s)-X_1(\tau_i)|, \;\;\;i=1, \cdots, k[m].\] By property (3) of Proposition \ref{prop:prop2}, the $(Z_i)_{i\geq 1}$ are identically distributed. Moreover, by property (4) of Proposition \ref{prop:prop2} and the triangle inequality, we obtain $\bE_x[Z_1^2]<\infty$. Since $(\tau_{i+1}-\tau_i;i\geq 1)$ are IID with finite mean by Property (1) of Proposition \ref{prop:prop2}, the strong law of large numbers implies that $\frac{\tau_n}{n}\to \mathfrak{m}$, $\bP_x$-a.s. as $n\to\infty$. It then easily follows that 
\begin{equation}\label{e.asymptoticK}
k[s]\overset{s\to\infty}{\sim} \mathfrak{m}^{-1}s, \quad \bP_x\text{-a.s.}
\end{equation} 
The result therefore follows from Fact \ref{l.convProb}.
\end{proof} 
Combining Lemma~\ref{lem:lem14} with property (5) of Proposition \ref{prop:prop2}, we obtain the following lemma.
\begin{lem}\label{lem.conv}
We have 
\[
d_\infty\left(\frac{\overline{X}(\cdot m)}{\sqrt{m}}, \frac{S(k[\cdot m])}{\sqrt{m}}\right) \to 0
\] in probability as $m\to\infty$. In particular, by (\ref{e.metricOrder}), this implies that \[d\left(m^{-1/2}\left(\overline{X}(\cdot m)-k[\cdot m]\beta\right), m^{-1/2}\left(S(k[\cdot m])-k[\cdot m]\beta\right)\right)\to 0\] in probability as $m\to\infty$, where $\beta\coloneqq \bE_x[\Delta_1]$.
\end{lem}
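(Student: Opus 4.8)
The plan is to assemble the statement from the two convergence results already available, using nothing more than the triangle inequality for the supremum metric $d_\infty$ together with the elementary observation that subtracting a common function from both arguments of $d_\infty$ leaves the distance unchanged.

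First I would observe that property (5) of Proposition~\ref{prop:prop2}, which was verified in Proposition~\ref{p.property5}, is precisely the assertion that
\[
d_\infty\left(\frac{\overline{X}(\cdot m)}{\sqrt{m}}, \frac{X_1(\cdot m)}{\sqrt{m}}\right) = m^{-1/2}\sup_{0\leq t\leq 1}\left|\overline{X}(tm)-X_1(tm)\right| \to 0
\]
in probability with respect to $\bP_x$. Meanwhile, Lemma~\ref{lem:lem14} gives $d_\infty(X_1(\cdot m)/\sqrt{m}, S(k[\cdot m])/\sqrt{m}) \to 0$ in probability. Applying the triangle inequality for $d_\infty$,
\[
d_\infty\left(\frac{\overline{X}(\cdot m)}{\sqrt{m}}, \frac{S(k[\cdot m])}{\sqrt{m}}\right) \leq d_\infty\left(\frac{\overline{X}(\cdot m)}{\sqrt{m}}, \frac{X_1(\cdot m)}{\sqrt{m}}\right) + d_\infty\left(\frac{X_1(\cdot m)}{\sqrt{m}}, \frac{S(k[\cdot m])}{\sqrt{m}}\right),
\]
and since a sum of two nonnegative quantities each tending to $0$ in probability also tends to $0$ in probability (e.g.\ by a union bound applied to the events that each summand exceeds $\epsilon/2$), the left-hand side converges to $0$ in probability. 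This establishes the first displayed convergence.

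For the ``in particular'' assertion I would note that the two functions being compared there, namely $m^{-1/2}(\overline{X}(\cdot m)-k[\cdot m]\beta)$ and $m^{-1/2}(S(k[\cdot m])-k[\cdot m]\beta)$, are obtained from $\overline{X}(\cdot m)/\sqrt{m}$ and $S(k[\cdot m])/\sqrt{m}$ by subtracting the same function $m^{-1/2}k[\cdot m]\beta$. Since $d_\infty$ depends only on the pointwise difference of its arguments, subtracting a common function does not change it, so
\[
d_\infty\left(m^{-1/2}\!\left(\overline{X}(\cdot m)-k[\cdot m]\beta\right), m^{-1/2}\!\left(S(k[\cdot m])-k[\cdot m]\beta\right)\right) = d_\infty\left(\frac{\overline{X}(\cdot m)}{\sqrt{m}}, \frac{S(k[\cdot m])}{\sqrt{m}}\right).
\]
Combining this with the bound $d \leq d_\infty$ from (\ref{e.metricOrder}) shows that the Skorohod distance $d$ between the two recentred, rescaled processes is dominated by a quantity already shown to vanish in probability, which gives the claim.

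I do not expect any genuine obstacle here: the lemma is an assembly step feeding into the final proof of Proposition~\ref{prop:prop2}. The only points requiring care are the bookkeeping identification of property (5) with a $d_\infty$-statement, the stability of convergence in probability under addition, and the shift-invariance of $d_\infty$ that lets the centring term $m^{-1/2}k[\cdot m]\beta$ be ignored; none of these is difficult.
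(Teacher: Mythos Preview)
Your proposal is correct and follows essentially the same approach as the paper: both apply the triangle inequality for $d_\infty$ to split the distance via $X_1(\cdot m)/\sqrt{m}$, invoking property~(5) and Lemma~\ref{lem:lem14} for the two pieces, and then use (\ref{e.metricOrder}) for the ``in particular'' claim. Your write-up is slightly more explicit about the shift-invariance of $d_\infty$ under subtraction of the common centring term, but this is a cosmetic difference only.
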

\begin{proof}
For every $\epsilon>0$, the triangle inequality implies that
\begin{align*}
&\bP_x\left(d_\infty\left(\frac{\overline{X}(\cdot m)}{\sqrt{m}}, \frac{S(k[\cdot m])}{\sqrt{m}}\right)>\epsilon\right)\\&\leq \bP_x\left(d_\infty\left(\frac{\overline{X}(\cdot m)}{\sqrt{m}}, \frac{X_1(\cdot m)}{\sqrt{m}}\right)>\frac{\epsilon}{2}\right)+\bP_x\left(d_\infty\left(\frac{X_1(\cdot m)}{\sqrt{m}}, \frac{S(k[\cdot m])}{\sqrt{m}}\right)>\frac{\epsilon}{2}\right)\to 0
\end{align*}
as $m\to\infty$.
\end{proof}

We will make use of the following version of Slutsky's Theorem for general metric space-valued random variables. 
\begin{thm}\emph{(Slutsky's Theorem, Theorem 3.1, \cite{B})} \label{Slutsky}
Let $(M,d)$ be a metric space. Suppose that $(X_n,Y_n)$ are random elements of $M\times M$. If $X_n\xrightarrow{d} X$ and $d(X_n,Y_n)\to 0$ in probability, then $Y_n\xrightarrow{d} X$. 
\end{thm}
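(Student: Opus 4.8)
The plan is to verify weak convergence via the Portmanteau criterion: to conclude $Y_n \xrightarrow{d} X$ it suffices to show that $\limsup_{n \to \infty} \bP(Y_n \in F) \le \bP(X \in F)$ for every closed set $F \subseteq M$. So I would fix a closed set $F$ and a parameter $\epsilon > 0$, and introduce the closed $\epsilon$-neighbourhood
\[
F^\epsilon := \{y \in M : d(y,F) \le \epsilon\}.
\]

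The key observation is the elementary event inclusion
\[
\{Y_n \in F\} \subseteq \{X_n \in F^\epsilon\} \cup \{d(X_n,Y_n) > \epsilon\},
\]
which holds because if $Y_n \in F$ and $d(X_n,Y_n) \le \epsilon$, then $d(X_n,F) \le \epsilon$, i.e.\ $X_n \in F^\epsilon$. Taking probabilities and then $\limsup_{n \to \infty}$, the hypothesis that $d(X_n,Y_n) \to 0$ in probability annihilates the contribution of the second term, while the first term is controlled using $X_n \xrightarrow{d} X$ together with the Portmanteau inequality applied to the closed set $F^\epsilon$. This yields $\limsup_{n \to \infty} \bP(Y_n \in F) \le \bP(X \in F^\epsilon)$.

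Finally I would let $\epsilon \downarrow 0$. Since $F$ is closed, $\bigcap_{\epsilon > 0} F^\epsilon = F$, so by continuity from above of the law of $X$ along the decreasing family $(F^\epsilon)_{\epsilon > 0}$ we get $\bP(X \in F^\epsilon) \downarrow \bP(X \in F)$. Combining the two displays gives $\limsup_{n \to \infty} \bP(Y_n \in F) \le \bP(X \in F)$ for every closed $F$, which is exactly the Portmanteau criterion for $Y_n \xrightarrow{d} X$.

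The only points requiring minor care are that $F^\epsilon$ is genuinely closed (so the Portmanteau inequality for closed sets applies to it) and that the decreasing intersection $\bigcap_{\epsilon>0} F^\epsilon$ recovers $F$ rather than merely its closure; both rely on $F$ being closed and neither is a real obstacle. The one genuinely structural input is the Portmanteau theorem for weak convergence in a metric space, used in both directions. If one prefers a self-contained route that avoids invoking Portmanteau, the same conclusion follows by testing against an arbitrary bounded uniformly continuous $f \colon M \to \R$ and splitting $\bE[|f(Y_n) - f(X_n)|]$ according to whether $d(X_n,Y_n)$ exceeds the threshold $\delta$ furnished by the uniform continuity of $f$; the ``large'' part is bounded by $2\|f\|_\infty\,\bP(d(X_n,Y_n) > \delta) \to 0$ and the ``small'' part is at most the modulus of continuity of $f$ at $\delta$, which can be made arbitrarily small.
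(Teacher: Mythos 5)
Your proof is correct. The paper does not actually prove this statement---it quotes it as Theorem 3.1 of Billingsley's \emph{Convergence of Probability Measures}---and your argument (the inclusion $\{Y_n \in F\} \subseteq \{X_n \in F^\epsilon\} \cup \{d(X_n,Y_n) > \epsilon\}$ for closed $F$, the Portmanteau bound applied to the closed set $F^\epsilon$, then $\epsilon \downarrow 0$ using $\bigcap_{\epsilon>0}F^\epsilon = F$) is precisely the standard proof given in that cited reference, so the two approaches coincide.
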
 We are now able to prove Proposition \ref{prop:prop2}. 
\begin{proof}[Proof of Proposition \ref{prop:prop2}] 
In this proof, every convergence in distribution statement is with respect to the Skorohod topology on $\cD([0,1], \R^d)$. Lemma \ref{lem:lem13} and \eqref{e.asymptoticK} imply that 
\begin{align*}
&\left(m^{-1/2}\left(S(k[tm])-k[tm]\beta\right),0\leq t\leq 1\right)\\&\overset{m\to\infty}{\sim}\left(m^{-1/2}\left(S\left(\big\lfloor tm\mathfrak{m}^{-1}\big\rfloor\right)- tm\mathfrak{m}^{-1} \beta\right),0\leq t\leq 1\right)\\&\xrightarrow{d}\left(\mathfrak{m}^{-1/2}QB(t):0\leq t\leq 1\right)
\end{align*} 
as $m\to\infty$. Combining this with Lemma~\ref{lem.conv}, we can apply Slutsky's Theorem (Theorem \ref{Slutsky}) to obtain 
\[
\left(m^{-1/2}\left(\overline{X}(tm)-k[tm]\beta\right), 0\leq t\leq 1\right)\xrightarrow{d} \left(\mathfrak{m}^{-1/2}QB(t), 0\leq t\leq 1\right)
\]  
as $m\to\infty$. Hence 
\[
\left(m^{-1/2}\left(\overline{X}(tm)-tm\alpha\right), 0\leq t\leq 1\right)\xrightarrow{d} (\Sigma B(t),0\leq t\leq 1)
\] 
as $m\to\infty$, where in view of \eqref{e.asymptoticK}, we have defined 
\begin{equation*}
\alpha \coloneqq \beta\mathfrak{m}^{-1}=\bE_x[\Delta_1]\mathfrak{m}^{-1}\quad\text{and}\quad \Sigma\coloneqq \mathfrak{m}^{-1/2}Q.
\qedhere\end{equation*}
\end{proof}

\addtocontents{toc}{\SkipTocEntry} 
\section{Acknowledgements}
The authors would all like to thank Julien Berestycki and Sarah Penington for helpful discussions and correspondence, in particular regarding their related work; and to thank Jeremy Quastel for initially proposing the study of this model in 2012. The authors would also like to thank an anonymous referee for an extremely careful reading that substantially improved the paper.

LAB was partially supported by NSERC Discovery Grant 643473 and Discovery Accelerator Supplement 643474, and FRQNT Grant 206470. JL was partially supported by NSERC Discovery Grant 247764, FRQNT Grant 250479, and the Canada Research Chairs program. TT was partially supported by FRQNT Grant 250479. 


\small 



%
%
%
%
%
\normalsize
                 
\appendix

\end{document}